\documentclass[11pt,reqno]{amsart}

\RequirePackage{amssymb}
\RequirePackage{amsmath}
\RequirePackage{amsthm}
\RequirePackage{array}
\RequirePackage{cancel}
\RequirePackage{enumerate}
\RequirePackage{mathtools}
\RequirePackage{tikz-cd}
\usetikzlibrary{positioning}
\usepackage{dynkin-diagrams} 
\RequirePackage{geometry}
\RequirePackage{comment}
\usepackage[colorlinks = true,
            linkcolor = blue,
            urlcolor  = blue,
            citecolor = blue]{hyperref}
\allowdisplaybreaks

\newcommand{\R}{\mathbb R}
\newcommand{\C}{\mathbb C}

\newcommand{\g}{\mathfrak g}
\renewcommand{\k}{\mathfrak k}
\newcommand{\p}{\mathfrak p}
\renewcommand{\t}{\mathfrak t}
\renewcommand{\a}{\mathfrak a}
\newcommand{\s}{\mathfrak s}

\renewcommand{\r}{\mathfrak r}
\renewcommand{\l}{\mathfrak l}
\newcommand{\q}{\mathfrak q}
\newcommand{\f}{\mathfrak f}
\newcommand{\h}{\mathfrak h}
\renewcommand{\sl}{\mathfrak {sl}}
\renewcommand{\sp}{\mathfrak {sp}}
\newcommand{\so}{\mathfrak {so}}
\newcommand{\su}{\mathfrak {su}}
\newcommand{\D}{\mathcal {D}}
\newcommand{\V}{\mathcal {V}}
\newcommand{\M}{\mathcal {M}}
\newcommand{\N}{\mathcal {N}}
\newcommand{\U}{\mathcal {U}}
\newcommand{\CC}{\mathcal {C}}
\renewcommand{\P}{\mathcal {P}}
\newcommand{\RR}{\mathcal R}

\renewcommand{\i}{\operatorname{i}}
\newcommand{\Ad}{\operatorname{Ad}}
\newcommand{\ad}{\operatorname{ad}}
\newcommand{\supp}{\operatorname{supp}}
\newcommand{\Id}{\operatorname{Id}}

\renewcommand{\Re}{\operatorname{Re}}
\renewcommand{\Im}{\operatorname{Im}}
\renewcommand{\c}{\operatorname{c}}
\newcommand{\nc}{\operatorname{nc}}
\newcommand{\spann}{\operatorname{span}}
\newcommand{\Hom}{\operatorname{Hom}}

\newcommand{\Gr}{\operatorname{Gr}}

\theoremstyle{plain}
\newtheorem{maintheorem}{Theorem}

\newtheorem{theorem}{Theorem}[section]
\newtheorem{proposition}[theorem]{Proposition}
\newtheorem{lemma}[theorem]{Lemma}
\newtheorem{corollary}[theorem]{Corollary}
\newtheorem{conjecture}[theorem]{Conjecture}

\theoremstyle{definition}
\newtheorem{example}[theorem]{Example}
\newtheorem{examples}[theorem]{Examples}
\newtheorem{algorithm}[theorem]{Algorithm}

\theoremstyle{remark}
\newtheorem{remark}[theorem]{Remark}
\newtheorem{definition}[theorem]{Definition}

\newcommand{\TwoRowDynkinA}[2]{%
\begin{tikzpicture}[>=Stealth, line cap=round, line join=round]
  \def\toppattern{#1}
  \def\bottompattern{#2}
  \StrLen{\toppattern}[\N]
  \def\dx{1}
  \def\dy{1}
  \def\r{3pt}

  \foreach \i in {1,...,\N}{
    \StrChar{\toppattern}{\i}[\nodechar]
    \if\nodechar*
      \node[circle, fill, inner sep=0pt, minimum size=2*\r] (T\i) at ({(\i-1)*\dx}, \dy) {};
    \else
      \node[circle, draw, inner sep=0pt, minimum size=2*\r] (T\i) at ({(\i-1)*\dx}, \dy) {};
    \fi
  }

  \foreach \i in {1,...,\N}{
    \StrChar{\bottompattern}{\i}[\nodechar]
    \if\nodechar*
      \node[circle, fill, inner sep=0pt, minimum size=2*\r] (B\i) at ({(\i-1)*\dx}, 0) {};
    \else
      \node[circle, draw, inner sep=0pt, minimum size=2*\r] (B\i) at ({(\i-1)*\dx}, 0) {};
    \fi
  }

  \pgfmathtruncatemacro{\Nmone}{\N-1}
  \foreach \i in {1,...,\Nmone}{
    \pgfmathtruncatemacro{\j}{\i+1}
    \draw (T\i) -- (T\j);
    \draw (B\i) -- (B\j);
  }

  \foreach \i in {1,...,\N}{
    \draw[<->, shorten <=2pt, shorten >=2pt] (T\i.south) -- (B\i.north);
  }
\end{tikzpicture}%
}

\title{Balanced and pluriclosed metrics on real  semisimple Lie groups}
\author{Joseph Kwong}
\begin{document}

\begin{abstract}
    We characterise the existence of balanced and pluriclosed metrics on compact quotients of real semisimple Lie groups equipped with regular complex structures,  in terms of Vogan diagrams.  Consequently,  such complex manifolds cannot simultaneously admit  a balanced metric and a pluriclosed metric. Along the way, we revisit and correct the classification of regular complex structures on real semisimple Lie groups.
\end{abstract}

\maketitle

\setcounter{tocdepth}{1}
\tableofcontents

\section{Introduction}
A \textit{Hermitian metric} on a complex manifold $(M^{2n},J)$ is a Riemannian metric $g$ on $M$ such that $J^* g = g$. A Hermitian metric $g$ is \textit{balanced} if the $2$-form $\omega := g(J \cdot,\cdot)$ satisfies 
$d \omega^{n-1} = 0,$  and \textit{pluriclosed} if $\partial \overline \partial \omega = 0$. 
We say that a complex manifold is \textit{balanced}/\textit{pluriclosed} if it admits a balanced/pluriclosed metric, respectively. Every K\"ahler manifold is both balanced and pluriclosed. In fact, a Hermitian metric is K\"ahler if and only if it is simultaneously balanced and pluriclosed; see
\cite{ivanov} or \cite{popovici_aeppli}. The \textit{Fino--Vezzoni conjecture} predicts the converse at the manifold level:
\begin{conjecture}[Fino--Vezzoni {\cite{fino_conjecture_statement}}]
    A compact complex manifold admitting both a balanced metric and a (possibly distinct) pluriclosed metric also admits a K\"ahler metric.
\end{conjecture}

The main aim of this article is to study the conjecture on Lie group quotients. Let $G$ be a Lie group, let $\Gamma$ be a cocompact lattice (i.e. a discrete subgroup such that $\Gamma \backslash G$ is compact), and let $J$ be a left-invariant complex structure on $G$. Then $J$ descends to a complex structure on $\Gamma \backslash G$, and we obtain a compact complex manifold $(\Gamma \backslash G,J)$. 
The Fino--Vezzoni conjecture is known to hold for $(\Gamma \backslash G,J)$ when $J$ is a bi-invariant complex structure \cite{wang},  when $G$ is a nilpotent Lie group \cite{fino_nilmanifolds}, and for other classes of solvable Lie groups \cite{fino_conjecture_statement, otiman, fino_2023, Zheng_2024, Fribert_2025, fino_2025}.
Our main result verifies the conjecture when $G$ is any semisimple Lie group and $J$ is any \textit{regular} left-invariant complex structure:
\begin{maintheorem}
    \label{main theorem}
    Let $G$ be a real semisimple Lie group, let $J$ be a regular complex structure on $G$, and let $\Gamma$ be a cocompact lattice of $G$. The compact complex manifold $(\Gamma \backslash G, J)$ cannot admit both a balanced metric and  a pluriclosed metric.
\end{maintheorem}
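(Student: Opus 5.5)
We reduce the existence question on the compact quotient to a question about left-invariant metrics on $G$, and then to a root-theoretic criterion that can be read off the Vogan diagram.

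The first step is the averaging trick. Since $G$ is semisimple it is unimodular, so $\Gamma\backslash G$ carries a finite bi-invariant volume (compatible with the right action of $G$). If $\omega$ is a balanced (resp.\ pluriclosed) Hermitian form on $(\Gamma\backslash G,J)$, we integrate its pullbacks over $\Gamma\backslash G$ (Belgun's symmetrisation). This produces a left-invariant $J$-Hermitian form on $\mathfrak g$. The conditions $d\omega^{n-1}=0$ and $\partial\overline\partial\omega=0$ are linear in $\omega^{n-1}$ and in $\omega$ respectively, so they survive averaging; for the balanced case we average the positive $(n-1,n-1)$-form $\omega^{n-1}$ and take its $(n-1)$-th root. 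Hence it suffices to show that $(\mathfrak g,J)$ cannot carry both a left-invariant balanced metric and a left-invariant pluriclosed metric. Equivalently, we must show that the existence criteria for the two are mutually exclusive.

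Next we describe $J$ via regularity. A regular complex structure is determined by a $\theta$-stable Cartan subalgebra $\mathfrak h=\mathfrak t\oplus\mathfrak a$, a subspace $\mathfrak h^{1,0}\subset\mathfrak h^{\C}$ complementary to its conjugate, and a positive system $\Delta^+$ such that $\mathfrak g^{1,0}=\mathfrak h^{1,0}\oplus\bigoplus_{\alpha\in\Delta^+}\mathfrak g_\alpha$, with conjugation compatible with $\Delta^+$ (this is what the Vogan-diagram data encode). We then pass to invariant forms.
\begin{itemize}
\item \emph{Balanced case.} We use the standard fact that a left-invariant metric on a unimodular Lie algebra is balanced iff $\omega^{n-1}$ is closed iff the Lee form vanishes. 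Dually, via the positive Hermitian $(1,1)$-vector, this amounts to the absence of certain exact positive currents. The obstruction comes from the trace of $\ad$ on $\mathfrak g^{1,0}$: for $X\in\mathfrak h$, $\operatorname{tr}(\ad X|_{\mathfrak g^{1,0}})=2\rho(X)+(\text{contribution of }\mathfrak h^{1,0})$. Balancedness should force the vanishing of a specific pairing of $2\rho$ with the noncompact part $\mathfrak a$.
\item \emph{Pluriclosed case.} We compute $\partial\overline\partial\omega$ on root vectors. For Hermitian $\omega$ we decompose $\omega=\omega_{\mathfrak h}+\sum c_\alpha\,\omega_\alpha$ plus off-diagonal terms, and test against $(2,2)$-vectors built from $\mathfrak g_\alpha,\mathfrak g_{-\alpha}$ and the Cartan. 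Pairing with $d$ of Cartan elements should give inequalities like $\sum_\alpha c_\alpha\langle\alpha,\cdot\rangle^2\le0$ that force strong constraints. We expect pluriclosedness to hold only when $\mathfrak g$ is essentially compact-like on the relevant part (e.g.\ $\mathfrak a$ small, as in $SU(2)$ or $SL(2,\C)$-type factors).
\end{itemize}

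The main obstacle is to derive sharp necessary conditions on each side, in terms of the Vogan diagram (which simple roots are compact, noncompact, or complex-paired), and to check that they are incompatible. We would first reduce to $\mathfrak g$ simple, or to a pair of ideals swapped by conjugation, since $J$ and both conditions respect the ideal decomposition. We then aim to show:
\begin{itemize}
\item balanced forces $\mathfrak a\neq0$ with $\rho|_{\mathfrak a}$ balanced by $\mathfrak h^{1,0}$ in a specific way;
\item pluriclosed forces the root system to be tiny, essentially rank-one $\mathfrak a$-directions, and $\mathfrak h^{1,0}$ to satisfy a complementary condition.
\end{itemize}
For example, pluriclosed test vectors built from a pair of positive roots $\alpha,\beta$ with $\alpha+\beta$ a root give a strict sign. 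This rules out most diagrams, and the remaining diagrams fail balancedness by the trace computation. Comparing the two lists case by case, with no Kähler possibility since semisimple Lie algebras admit no invariant symplectic form, completes the proof.
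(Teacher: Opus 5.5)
Your first step (averaging over $\Gamma\backslash G$, and averaging $\omega^{n-1}$ in the balanced case) is correct and is how the paper begins. After that, however, the proposal contains no actual argument: both criteria are only guessed, and the guesses are false.

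Balancedness does not force $\a\neq0$. For a maximally compact $\h$ in an inner $\g$ one has $\a=0$, yet every regular structure on $\so(4,1)$ or $\g_2^{\nc}$, and some on $\su(2,1)$, admit balanced metrics. Pluriclosedness does not force small rank; it occurs on $\su(2k+1)$ for every $k$. It also excludes, rather than allows, complex-type factors such as $\sl(2,\C)$: in fact it forces $\g$ to be inner, i.e.\ $\a=0$.

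So in the inner case each condition occurs separately with $\a=0$, and no separation via $\a$ or $\rho|_{\a}$ can work. The distinction lies entirely in the compact/non-compact pattern of the imaginary roots. Your invariant $\operatorname{tr}(\ad X|_{\g^{1,0}})=2\rho(X)$ is blind to this pattern. It is identical for compact $\su(3)$ and for $\su(2,1)$ with both vertices painted, yet the former is never balanced and the latter is balanced for every choice of $\l$.

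Your description of regular $J$ also omits the $\Delta_0$-data. In general $\q$ contains $\g^\C_{-\alpha}$ for $\alpha\in\RR_0^+$, as for the bi-invariant structure on a complex group.

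The missing ingredients are as follows.

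\emph{Second averaging.} Average over the compact torus $\Ad_G(T)$ (Lemma \ref{AdGT is compact}) to assume $g$ is right $T$-invariant. Then $g$ pairs $\g^\C_\alpha$ only with $\g^\C_{-\alpha}$ and $\g^\C_{\sigma\alpha}$. Without this, the off-diagonal terms you mention are uncontrolled.

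\emph{Balanced equation.} Normalise root vectors so that $\sigma E_\alpha=-s_\alpha E_{-\alpha}$ for imaginary $\alpha$. For inner $\g$, the balanced condition $\sum_i[V_i,\sigma V_i]=0$ then becomes $\sum_{\alpha\in\Phi^+}s_\alpha\alpha/\lambda_\alpha=0$. The signs $s_\alpha$ are exactly what your trace misses.

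\emph{Pluriclosed, non-inner case.} A complex root $\alpha\in\RR$ gives $dd^c\omega(H,\sigma H,E_\alpha,\sigma E_\alpha)=2|(\alpha+\sigma\alpha)(H)|^2g(E_\alpha,\sigma E_\alpha)\neq0$ for suitable $H\in\l$.

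\emph{Pluriclosed, inner case.} Pluriclosedness yields $s_{\alpha+\beta}\lambda_{\alpha+\beta}=s_\alpha\lambda_\alpha+s_\beta\lambda_\beta-\kappa_i$. Two non-compact positive roots with $\alpha+\beta$ a root would therefore give $\lambda_{\alpha+\beta}<0$. Hence each simple factor is compact, or has a single painted vertex $\alpha_k$ with $c_k\le1$ on $\Phi^+$ (Proposition \ref{table 1 characterisation prop}, Lemma \ref{single painted vertex lemma}).

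\emph{Conclusion.} On such a factor, the functional $\alpha\mapsto-c_k(\alpha)$ (resp.\ $\alpha\mapsto\sum_i c_i(\alpha)$ in the compact case) is $\ge0$ on each $s_\alpha\alpha$ and $>0$ on some. So the balanced equation has no positive solution.

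Only these two one-directional facts are needed, not a case-by-case comparison of full lists. The absence of invariant K\"ahler metrics does not help, since the theorem concerns two possibly different metrics.
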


Recall that every semisimple Lie group $G$ admits a cocompact lattice \cite{borel}.
 A left-invariant complex structure $J$ on $G$ is called \textit{regular} if it is 
also right-invariant with respect to a Cartan subgroup of $G$ \cite{snow_1986}.  

Theorem \ref{main theorem} has already been proved when $G$ is compact semisimple: in this case, $(G, J)$ does not admit any balanced metrics \cite[Corollary 5.1]{fino_2019}. Moreover, the left-invariant complex structures admitting pluriclosed metrics on compact $G$ are classified by Lauret and Montedoro in  \cite{lauret}. 
For each  even-dimensional non-compact simple $G$ of \textit{inner type} (i.e. there is a Cartan involution of the Lie algebra of $G$ which is an inner automorphism), Giusti and Podest\`a \cite{giusti_podesta_2023} construct a family of regular complex structures $J$ for which $(\Gamma \backslash G, J)$ admits balanced metrics but no pluriclosed metrics.

\subsection{Refined statements} 
Using a well-known symmetrisation argument (see Proposition \ref{symmetrisation 1 prop}), Theorem \ref{main theorem} follows from  Theorems \ref{balanced technical theorem} and \ref{pluriclosed technical theorem}, which completely characterise the existence of  balanced and pluriclosed metrics on regular complex structures using \textit{Vogan diagrams}. We now introduce some notation and definitions in order to state these technical results. We refer the reader to \S \ref{semisimple Lie algebras section} for the background on real semisimple Lie algebras and Vogan diagrams.

Let $\g$ denote the Lie algebra of the semisimple Lie group $G$, and let $\g^\C$ denote its complexification. A left-invariant complex structure $J$ on $G$ is determined by the $\i$-eigenspace $\q$ of $J:\g^\C \rightarrow \g^\C$ (see \S \ref{invariant complex structures subsection}). We say that $J$ is a \textit{regular complex structure} if $J \circ \ad(H) = \ad(H) \circ J$ for all $H \in \h$ for some Cartan subalgebra $\h$ of $\g$ (see \S \ref{regular classification subsection}). In this case, we say that $J$ is \textit{$\h$-regular}. Every regular complex structure on $G$ can be constructed by the following algorithm, which is a corrected version of a procedure in \cite{snow_1986}  (see Algorithm \ref{regular complex structures construction algorithm}):
\begin{enumerate}[(i)]
    \item Choose a maximally compact Cartan subalgebra $\h$ of $\g$. The complex conjugation map of $\g^\C$ induces an automorphism $\sigma$ of the root system $\Phi$ of $(\g^\C, \h^\C)$.
    \item Choose a set of positive roots $\Phi^+$ of $\Phi$ such that $\sigma \Phi^+ = - \Phi^+$.
    \item Choose a subset $\Delta_0$ of the simple roots of $\Phi^+$ such that no vertex of $\Delta_0$ is adjacent to a vertex of  $-\sigma \Delta_0$ in the Dynkin diagram determined by $\Phi^+$.
    \item Choose a complex subspace $\l$ of $\h^\C$ such that $\l \oplus \sigma \l = \h^\C$ and $[\g^\C_\alpha, \g^\C_{-\alpha}] \subseteq \l$ for all $\alpha \in \Delta_0$.
\end{enumerate}
Set $\RR_0 := \R \Delta_0 \cap  \Phi$, and $\RR := \RR_0 \sqcup \Phi^+ \backslash (\RR_0^+ \sqcup (-\sigma \RR_0^+))$. Then
$$\q := \l \oplus \bigoplus_{\alpha \in \RR} \g^\C_\alpha$$
is the $\i$-eigenspace of a regular complex structure $J = J(\Phi^+, \Delta_0, \l)$, where  $\g^\C_\alpha$ denotes the root space of $\alpha \in \Phi$.

A \textit{Vogan diagram} is a triple $\V = (\D, \theta, \P)$, where $\D$ is a Dynkin diagram, $\theta$ is an involutive automorphism of $\D$ whose two-cycles are denoted by bidirectional arrows, and $\P$ is a subset of the fixed points of $\theta$, which we paint. 

Fix a maximally compact Cartan subalgebra $\h$ of $\g$. Let $\Phi$ denote the roots of $(\g^\C, \h^\C)$. Let $J= J(\Phi^+, \Delta_{0}, \mathfrak l)$ be an $\h$-regular complex structure on $G$. The choice of positive roots $\Phi^+$ determines a Vogan diagram $\V$ (see Algorithm \ref{semisimple g to Vogan diagram}).

\begin{maintheorem}
    \label{balanced technical theorem}
    Let $G$ be a semisimple Lie group, and let $J$ be an $\h$-regular complex structure on $G$.
    There exists a left-invariant balanced metric on $(G,J)$ if and only if none of the connected components of $\V$ are listed in Table  \ref{vogan table}.
\end{maintheorem}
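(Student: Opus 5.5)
We reduce the existence of a left-invariant balanced metric on $(G,J)$ to a linear-algebra condition on $\g^\C$ and then test that condition root by root. The standard criterion (Michelsohn's duality, in its left-invariant form) says that a balanced metric exists if and only if no nonzero exact positive $(1,1)$-form exists. For a unimodular $G$ we use the invariant version: $(G,J)$ admits a left-invariant balanced metric if and only if there is no nonzero left-invariant real $(1,1)$-form $\eta \ge 0$ that is the $(1,1)$-part of $d\beta$ for some left-invariant $1$-form $\beta$. Equivalently, the positive cone meets the image of $(d\,\cdot)^{1,1}$ only in $0$. Since $G$ is semisimple, hence unimodular, invariant forms can be averaged and the criterion applies directly.

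Next we compute $(d\,\cdot)^{1,1}$ in the root basis. Write $\q=\l\oplus\bigoplus_{\alpha\in\RR}\g^\C_\alpha$ and let $\bar\q=\sigma\q$. For $\xi\in(\g^\C)^*$ we have $d\xi(X,\bar Y)=-\xi([X,\bar Y])$ with $X,Y\in\q$, so the image of $(d\,\cdot)^{1,1}$ is controlled by $[\q,\bar\q]$. Since $\ad(\h)$ commutes with $J$, everything decomposes into $\h$-weights, and a positive form may be averaged over the torus $\exp(\h\cap\k)$ together with the relevant compact part. This lets us assume $\eta$ is $\h$-invariant. Such an $\eta$ is then a sum of a Hermitian form on $\l$ and diagonal terms $c_\alpha\,\i\,\omega^\alpha\wedge\bar\omega^{\alpha}$ with $c_\alpha\ge 0$, for $\alpha\in\RR$.

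The exact invariant $(1,1)$-forms are the $(1,1)$-parts of $dH^*$ for $H\in\h^\C$. Concretely, $(dH^*)^{1,1}$ pairs $E_\alpha\in\q$ with $\overline{E_\alpha}\in\g_{\sigma\alpha}$. It is nonzero only when $\sigma\alpha=-\alpha$ (imaginary roots in $\RR$), or on the $\l$-block via brackets $[E_\alpha,E_{-\alpha}]$ for $\alpha\in\RR_0$, where both $\pm\alpha\in\RR$. The coefficient is $\pm\alpha(H)$, up to a normalisation $\langle E_\alpha,\bar E_\alpha\rangle$ whose sign is $+$ for compact and $-$ for noncompact imaginary roots. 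So the obstruction becomes: there exists $H$ such that $\alpha(H)\epsilon_\alpha\ge 0$ for all imaginary $\alpha\in\RR$, not all zero, with the $\l$-block term also nonnegative.

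This is a Farkas-type feasibility problem. By Farkas/Gordan's alternative, a balanced metric exists exactly when a strictly positive combination $\sum_{\alpha}c_\alpha\epsilon_\alpha\alpha$ (plus the $\l$ contribution) vanishes. Because $\Phi^+$ is $\sigma$-compatible, the imaginary roots in $\Phi^+\setminus(\RR_0^+\sqcup-\sigma\RR_0^+)$ are exactly the positive imaginary roots of the Vogan diagram outside $\RR_0$. Their signs $\epsilon_\alpha$ are read off from painting: a root is noncompact iff the number of painted simples in it, counted with multiplicity, is odd. The problem therefore splits over the connected components of $\V$, since roots of different simple factors are orthogonal.

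The main obstacle is the final case analysis. For each connected Vogan diagram we must decide whether the positive cone spanned by $\{\epsilon_\alpha\alpha\}$ contains $0$ nontrivially, and show this fails precisely for the diagrams in Table \ref{vogan table}. We would try two ingredients first. In the generic case, a combination with total weight $\sum_\alpha\epsilon_\alpha\alpha=0$ comes from $W$-symmetry of compact/noncompact roots, e.g.\ $2\rho_c-2\rho_n$ being a suitable combination. In the exceptional cases, we exhibit an explicit $H$, such as the fundamental coweight dual to the painted node, which makes all $\epsilon_\alpha\alpha(H)$ nonnegative. This is a finite check over the Borel--de Siebenthal classification of connected Vogan diagrams, done type by type.
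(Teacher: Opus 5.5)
There are two gaps: the reduction drops a constraint coming from the complex roots, so the resulting criterion is false for non-inner $\g$; and in the inner case the ``balanced'' direction is never actually proved.

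\textbf{The missing complex-root constraint.} Your dual framework itself is fine. The finite-dimensional Michelsohn criterion holds on the unimodular $\g$. After averaging over $T$, $\beta$ kills $[\t,\g]=\g\cap\bigoplus_\alpha\g^\C_\alpha$, because $\h$ is maximally compact. So $\eta=(d\beta)^{1,1}$ is $\ad(\h)$-invariant; this, and not averaging over the non-compact $\exp\a$, is why $\h$-invariance is available.

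The error is in your description of $\eta$:
\begin{itemize}
\item An $\ad(\h)$-invariant $(1,1)$-form pairs $E_\alpha$ with $\overline{E_\gamma}\in\g^\C_{\sigma\gamma}$ only when $\gamma=-\sigma\alpha$.
\item For complex $\alpha$ there are therefore no diagonal terms $\i\,\omega^\alpha\wedge\bar\omega^\alpha$. Indeed $dH^*(E_\alpha,\overline{E_\alpha})=0$, since $\alpha+\sigma\alpha$ is neither $0$ nor a root.
\item For $\alpha\in\RR_2$ there is an off-diagonal entry you omitted: $(dH^*)^{1,1}(E_\alpha,\overline{E_{-\sigma\alpha}})=dH^*(E_\alpha,E_{-\alpha})=-\alpha(H)$.
\item Your ``$\l$-block'' does not exist. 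We have $[\h,\h]=0$, and for $\alpha\in\RR_0$ both $E_{\pm\alpha}\in\q$, so $[E_\alpha,E_{-\alpha}]$ only enters the $(2,0)$-part.
\end{itemize}
On $\spann\{E_\alpha,E_{-\sigma\alpha}\}$ the form $\eta$ has zero diagonal, so positivity forces $\alpha(H)=0$ for all $\alpha\in\RR_2$.

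After Farkas, this extra constraint changes your condition $\sum c_\alpha\epsilon_\alpha\alpha=0$ into $\sum_{\alpha\in\Phi^+_{\Im}}c_\alpha\epsilon_\alpha\alpha\in V=\R\{\alpha-\sigma\alpha:\alpha\in\RR_2\}$. This is exactly the paper's condition (\ref{imaginary vector contained in V}), produced by the $\mu_\alpha$-terms in Proposition~\ref{balanced equation prop}.

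Without the constraint your criterion gives wrong answers. For $\sl(3,\R)$ the only admissible $\Delta_0$ is $\emptyset$, and the only positive imaginary root is $\alpha_1+\alpha_2$. So no strictly positive combination vanishes, and you would conclude ``not balanced''. The theorem (cf.\ Table~\ref{rank 2 table}) says every regular $J$ there is balanced. The same happens, with $\Delta_0=\emptyset$, for every $\sl(2k+1,\R)$, whose positive imaginary roots $\alpha_{k+1-i}+\dots+\alpha_{k+i}$ are linearly independent.

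\textbf{The inner case.} Here $\RR_2=\emptyset$, and your reduction is correct and exactly dual to the paper's. With $y_i=\alpha_i(H)$, your obstruction is the condition $Ay\geq0$, $Ay\neq0$ of Lemma~\ref{stiemke lemma}. Your coweight for the table cases is the paper's $y=-e_k$; for compact $\V$ there is no painted node, so take $y=(1,\dots,1)$ instead.

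The ``balanced'' direction is not proved, and your suggested ingredients do not work:
\begin{itemize}
\item $2\rho_c-2\rho_n$ is generally non-zero. For $\sp(4,\R)$ with both nodes of $C_2$ painted ($\alpha_1$ short) it equals $-2\alpha_1-\alpha_2$, yet this diagram is balanced.
\item A check over the Borel--de Siebenthal list does not suffice. The answer depends on the particular diagram induced by $\Phi^+$, including the infinitely many diagrams with several painted vertices, which Borel--de Siebenthal normalises away. For example, $\su(2,1)$ has one balanced and one non-balanced diagram.
\end{itemize}

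The missing idea is the paper's uniform argument:
\begin{itemize}
\item If $Ay\geq0$ and $Ay\neq0$, then $y_k<0$ at some painted vertex (Lemma~\ref{yk lemma}).
\item With two or more painted vertices, take the path from that vertex to the nearest other painted vertex through compact ones. By Lemma~\ref{root chain lemma} this gives a compact root $\alpha_{i_1}+\dots+\alpha_{i_\ell}$ with negative $y$-sum.
\item With one painted vertex not in Table~\ref{vogan table}, Lemma~\ref{single painted vertex lemma}(iv) gives a non-compact $\alpha$ with $\alpha_k+\alpha$ a compact root, again with negative $y$-sum.
\end{itemize}
Both contradict $Ay\geq0$.
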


When $G$ is compact semisimple, every connected component of $\V$ is listed in Table \ref{vogan table}, so $(G,J)$ is never balanced (cf. \cite[Corollary 5.1]{fino_2019}). When $G$ is inner simple, \cite[Lemma 3.2]{giusti_podesta_2023} construct their regular complex structures $J$ so that $\V$ has at least two painted vertices. Then $\V$ is not listed in Table \ref{vogan table}, so $(\Gamma \backslash G,J)$ is balanced.

Let $\g_1,\ldots, \g_s$ denote the simple factors of $\g$, and let $B_i$ denote the Killing form of $\g_i$. When $\g$ is inner, there is a unique Cartan involution $\theta: \g \rightarrow \g$  such that $\theta \h = \h$ (see Remark \ref{cartan involution inner}).  In this case, set  $\theta_i := \theta|_{\g_i}: \g_i \rightarrow \g_i$.

\begin{maintheorem}
    \label{pluriclosed technical theorem}
    Let $G$ be a semisimple Lie group, and let $J$ be an $\h$-regular complex structure on $G$.
    \begin{enumerate}[\normalfont (a)]
        \item If $\g$ is not inner, then there are no left-invariant pluriclosed metrics on $(G,J)$.
        \item If $\g$ is inner, then the following are equivalent:
        \begin{enumerate}[\normalfont (i)]
            \item $(G,J)$ admits a left-invariant pluriclosed metric.
            \item Every connected component of $\V$ is listed in Table \ref{vogan table}, and the inner product $-(\sum_{i=1}^s \kappa_i B_i)|_{\h \times \h}$ is $J|_{\h}$-invariant for some $\kappa_1,\ldots, \kappa_s > 0$.
            \item  The left-invariant metric $g = -\sum_{i=1}^s \kappa_i B_i(\cdot, \theta_i \cdot) $ is a pluriclosed metric on $(G,J)$ for some $\kappa_1,\ldots, \kappa_s > 0$.
        \end{enumerate}
    \end{enumerate}
\end{maintheorem}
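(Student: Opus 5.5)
We will prove Theorem \ref{pluriclosed technical theorem} by computing $\partial\overline\partial\omega$ for a left-invariant Hermitian form $\omega$ directly in the root-space decomposition. The first step is a reduction to $\h$-invariant metrics. Let $H \subseteq G$ be the connected subgroup with Lie algebra $\h$; $J$ commutes with $\ad(\h)$, so $\Ad(H)$ preserves $J$. Its closure $T=\overline{\Ad(H)}$ intersected with the compact part is a torus. Averaging a pluriclosed left-invariant $\omega$ over the compact torus $\overline{\Ad(\exp \t)}$, with $\t = \h \cap \k$, yields again a pluriclosed left-invariant form. The pluriclosed condition is linear in $\omega$, and positivity and $J$-compatibility are preserved. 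The non-compact part $\a = \h\cap\p$ cannot be averaged. Instead we will show directly from $\partial\overline\partial\omega(X,\overline Y,Z,\overline W)=0$, tested on weight vectors, that the components of $\omega$ with nonzero $\a$-weight must vanish on the relevant pieces.

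After averaging, $\omega$ pairs $\g^\C_\alpha$ only with $\g^\C_\beta$ where $\alpha+\beta$ vanishes on $\t$, that is $\beta\in\{-\alpha,\sigma\alpha\}$ up to the $\a$-part. In $(1,0)$-terms this says $\omega$ is a sum of a part on $\l$ and "diagonal" coefficients $x_\alpha>0$ for $\alpha\in\RR$, possibly with off-diagonal terms between $\alpha$ and $-\sigma\alpha$ when these both lie in $\RR$.

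Next we compute $dd^c\omega=2\i\partial\overline\partial\omega$ via the Chevalley--Eilenberg differential. We evaluate it on quadruples of root vectors $E_\alpha, \overline{E_\beta}, E_\gamma, \overline{E_\delta}$ and on mixed ones involving $\l$. For (a) we test against a root $\alpha\in\RR$ that is complex, meaning $\sigma\alpha\neq\pm\alpha$; such roots exist precisely when $\g$ is not inner, since with a maximally compact $\h$ the outer case forces $\theta$ on $\D$ to be nontrivial. The aim is to find a component of the form $\partial\overline\partial\omega(E_\alpha,\overline{E_\alpha},E_{\beta},\overline{E_\beta})$ equal to a strictly positive combination of the $x$'s. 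This parallels the compact case of Lauret--Montedoro \cite{lauret} and the nonexistence argument there. For (b), in the inner case $\h=\t$ is compact and every root is imaginary, compact or noncompact. The diagonal coefficients then give equations
\[
\partial\overline\partial\omega(E_\alpha,\overline{E_\alpha},E_\beta,\overline{E_\beta})\;\propto\; \pm x_{\alpha+\beta}\mp x_\alpha\mp x_\beta + (\text{terms from } \omega|_\l \text{ paired with } [E_\alpha,E_{-\alpha}]),
\]
with signs governed by whether $\alpha,\beta$ are compact or noncompact. These are the same equations as for $\omega=-\sum\kappa_iB_i(\cdot,\theta_i\cdot)$, whose solutions are exactly $x_\alpha=\kappa_i|\cdot|$. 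The $\l$-terms force the Killing-type form on $\h$ to be $J|_\h$-invariant.

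For the equivalences in (b): (iii)$\Rightarrow$(i) is trivial. (i)$\Rightarrow$(ii) follows from the rigidity computation above. The equations force the coefficients on each simple factor to be proportional to $-B_i(\cdot,\theta_i\cdot)$, and the sign consistency of the compact/noncompact pattern along $\RR$ yields the Vogan-diagram condition matching Table \ref{vogan table}. For (ii)$\Rightarrow$(iii) we verify $dd^c\omega=0$ for the Killing-type metric using the Table \ref{vogan table} condition, which controls the signs $\pm$ in the displayed identity. The main obstacle is this sign bookkeeping. One must show that the painted vertices of $\V$ permitted in Table \ref{vogan table} are exactly those for which all brackets $[\RR,\RR]$ and $[\RR,-\RR]$ produce cancelling contributions. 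I would handle it case by case through the connected components, using the description of $\RR=\RR_0\sqcup\Phi^+\setminus(\RR_0^+\sqcup(-\sigma\RR_0^+))$.
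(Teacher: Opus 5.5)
\textbf{Part (a) has a genuine gap.} What you give there is only an aim. You never specify $\beta$, and for a root vector $E_\beta$ the component $\partial\overline\partial\omega(E_\alpha,\overline{E_\alpha},E_\beta,\overline{E_\beta})$ picks up the Cartan part of $g$. Whenever $\alpha-\sigma\alpha$ is a root (already for $\sl(3,\R)$) it also picks up coefficients of other root spaces. So in general no sign is evident, and the result is not a combination of the $x$'s alone. The analogy with Lauret--Montedoro does not help: a compact $\g$ has no complex roots, so there is no such obstruction there.

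The missing idea is to pair $E_\alpha,\sigma E_\alpha$ with $H,\sigma H$ for $H\in\l$. We have $[H,\sigma H]=0$, and $[E_\alpha,\sigma E_\alpha]=0$ because $\alpha+\sigma\alpha$ is $\sigma$-fixed and nonzero, hence not a root when $\h$ is maximally compact. So only the $\ad(\h)$-terms survive, and
\[
dd^c\omega(H,\sigma H,E_\alpha,\sigma E_\alpha)=2\,\bigl|(\alpha+\sigma\alpha)(H)\bigr|^2\,g(E_\alpha,\sigma E_\alpha).
\]
Moreover $(\alpha+\sigma\alpha)|_{\l}\neq 0$: a $\sigma$-fixed functional vanishing on $\l$ also vanishes on $\sigma\l$, hence on $\h^\C=\l\oplus\sigma\l$.

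This is exactly the ``nonzero $\a$-weight'' effect you invoke in your reduction step. There you use it only to discard off-diagonal terms, while keeping $x_\alpha>0$ for complex $\alpha\in\RR$. But the diagonal coefficient $x_\alpha=g(E_\alpha,\sigma E_\alpha)$ itself has nonzero $\a$-weight $\alpha+\sigma\alpha$. Its forced vanishing contradicts positivity, and that is the entire proof of (a). This computation does not even need the averaging over $T$.

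\textbf{Part (b) follows the paper's outline but contains a false step.} Like the paper, you average over the torus $T$ and then use the Lauret--Montedoro type system: $g|_{\h\times\h}=-(\sum\kappa_iB_i)|_{\h\times\h}$, together with $\i y_{\alpha+\beta}=\i y_\alpha+\i y_\beta-\kappa_i$ where $\i y_\alpha=s_\alpha\lambda_\alpha$. The rigidity you use for (i)$\Rightarrow$(ii) is false: only $g|_{\h\times\h}$ is forced. The root equations are solved by $\i y_\alpha=\kappa_i+f(\alpha)$ for every linear $f$ keeping all $\lambda_\alpha>0$. For $\su(2)\oplus\su(2)$ the $\lambda$'s are completely free. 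So pluriclosed metrics need not be proportional to $-B_i(\cdot,\theta_i\cdot)$ on the root spaces.

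Argue by positivity instead. If $\alpha,\beta\in\Phi_i^+$ are non-compact and $\alpha+\beta$ is a root, then $\alpha+\beta$ is compact (Corollary \ref{compactness addition rules}). The equation then gives $\lambda_{\alpha+\beta}=-\lambda_\alpha-\lambda_\beta-\kappa_i<0$, a contradiction.

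Your case-by-case sign bookkeeping can be replaced by Proposition \ref{table 1 characterisation prop}. It says a connected Vogan diagram is in Table \ref{vogan table} iff it is inner and no two non-compact positive roots add up to a root. It is proved uniformly from Lemma \ref{single painted vertex lemma} and a path argument in the Dynkin diagram. The same property gives $s_{\alpha+\beta}=s_\alpha+s_\beta-1$, which is exactly what (ii)$\Rightarrow$(iii) needs, since $\lambda_\alpha=\kappa_i$ for $g=-\sum\kappa_iB_i(\cdot,\theta_i\cdot)$.

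In (ii)$\Rightarrow$(iii) you should also check that this $g$ is $J$-Hermitian, i.e.\ $g(\q,\q)=0$. This follows from $J|_\h$-invariance on $\h$ and Proposition \ref{right T invariant metrics form}. Finally, in the inner case $\Delta_0=\emptyset$ and $\RR=\Phi^+$, so the general description of $\RR$ plays no role.
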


For an even-dimensional compact semisimple $G$, every connected component of $\V$ is listed in Table \ref{vogan table}, so there exist regular $J$ such that $(G,J)$ is pluriclosed. Since $\theta = \Id$,  the metrics  $-\sum \kappa_i B_i(\cdot, \cdot) $ are precisely the bi-invariant metrics on $G$. In particular, if $(G,J)$ is pluriclosed, then there exists a bi-invariant pluriclosed metric (cf. \cite{lauret}). 

For each even-dimensional inner semisimple $G$, \cite{giusti_podesta_2023} construct their regular complex structures $J$ so that $\V$ has at least two painted vertices. Then $\V$ is not listed in Table \ref{vogan table}, so $(\Gamma \backslash G,J)$ is not pluriclosed.

\begin{table}[h]
\centering
\begin{tabular}{c|c}
    Family  & Vogan Diagram \\ \hline
    Compact & Any Vogan diagram with no 2-cycles and no painted vertices \\
    $A_r$, $r \geq 1$  & Any $A_r$ diagram with no 2-cycles and exactly one painted vertex \\
    $B_r$, $ r \geq 2$ & \dynkin[edge length=1cm, root radius=.1cm]{B}{*o.oo} \\
    $C_r$, $ r \geq 3$ & \dynkin[edge length=1cm, root radius=.1cm]{C}{oo.oo*} \\
    $D_r$, $ r \geq 4$ & \dynkin[edge length=1cm, root radius=.1cm]{D}{*o.ooo} and \dynkin[edge length=1cm, root radius=.1cm]{D}{oo.o*o} \\
    $E_6$, $E_7$ & \dynkin[edge length=1cm, root radius=.1cm]{E}{ooooo*} and \dynkin[edge length=1cm, root radius=.1cm]{E}{oooooo*} \\
\end{tabular}
\caption{Connected Vogan diagrams appearing in Theorems \ref{balanced technical theorem} and \ref{pluriclosed technical theorem}.}
\label{vogan table}
\end{table}

\begin{corollary}
    \label{main corollary}
    Let $G$ be an even-dimensional semisimple Lie group, and let $\Gamma$ be a cocompact lattice.
    \begin{enumerate}[\normalfont (i)]
        \item There exists a regular $J$ such that $(\Gamma \backslash G, J)$ is balanced if and only if no simple factor of $\g$ is $\sl(2,\R)$ or compact.
        \item There exists a regular $J$ such that $(\Gamma \backslash G, J)$ is pluriclosed if and only if every simple factor of $\g$ is compact or Hermitian.
        \item If $\g$ is simple, then there exists a regular $J$ such that $(\Gamma \backslash G, J)$ is either balanced or pluriclosed.
    \end{enumerate}
\end{corollary}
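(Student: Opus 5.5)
The plan is to combine the symmetrisation argument (Proposition \ref{symmetrisation 1 prop}) with Theorems \ref{balanced technical theorem} and \ref{pluriclosed technical theorem}. Symmetrisation reduces the existence of a balanced (resp.\ pluriclosed) metric on $(\Gamma\backslash G,J)$ to the existence of a left-invariant one on $(G,J)$. Everything then becomes a question about which Vogan diagrams $\V$ arise from admissible data $J(\Phi^+,\Delta_0,\l)$ as in Algorithm \ref{regular complex structures construction algorithm}. I will always take $\Delta_0=\emptyset$. Then step (iii) is vacuous, and step (iv) only requires a complex subspace $\l\subseteq\h^\C$ with $\l\oplus\sigma\l=\h^\C$. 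This is exactly the $\i$-eigenspace of a complex structure on $\h$. Such a complex structure exists because $\dim\g\equiv\operatorname{rank}\g \pmod 2$ and $\dim \g$ is even. Since $\Phi^+$, $\Delta_0$ and $\l$ can be chosen factor by factor, and $\V$ is the disjoint union of the diagrams of the simple factors, each part reduces to choosing a positive system in each simple factor.

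\emph{Part (i).} For a compact factor, every Vogan diagram has no painted vertices and no $2$-cycles, so it is listed in Table \ref{vogan table}. For $\sl(2,\R)$ the only Vogan diagram is $A_1$ with one painted vertex, which is also listed. By Theorem \ref{balanced technical theorem}, either kind of factor therefore obstructs balancedness. Conversely, suppose no factor is compact or $\sl(2,\R)$, and consider a noncompact simple factor.
\begin{itemize}
\item If the factor is of outer type, $\sigma$ acts nontrivially and every admissible $\Phi^+$ produces a diagram with a $2$-cycle. No such diagram is listed in the table.
\item If the factor is of inner type, $\h$ is compact and $\sigma=-1$ on $\Phi$, so every positive system satisfies $\sigma\Phi^+=-\Phi^+$. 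I would pick $\Phi^+$ whose Vogan diagram has at least two painted vertices, as in \cite[Lemma 3.2]{giusti_podesta_2023}. Every noncompact diagram in the table has exactly one painted vertex, so this diagram is not listed.
\end{itemize}
Applying this to every factor, Theorem \ref{balanced technical theorem} gives a balanced $J$.

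\emph{Part (ii).} If some factor is neither compact nor Hermitian, I split by type. If it is outer, then $\g$ is not inner and Theorem \ref{pluriclosed technical theorem}(a) rules out pluriclosed metrics. If it is inner and non-Hermitian, its Vogan diagrams with one painted vertex are exactly the diagrams with a painted simple root of coefficient $\ge 2$ in the highest root. The diagrams in the table with one painted vertex are precisely those where the painted root has coefficient $1$ (this is the Borel--de Siebenthal/Harish-Chandra description of Hermitian algebras). Hence no diagram of this factor is listed, and Theorem \ref{pluriclosed technical theorem}(b) rules out pluriclosed metrics.

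Conversely, if every factor is compact or Hermitian, then $\g$ is inner. For each Hermitian factor I choose $\Phi^+$ so that the unique painted simple root has coefficient $1$; this is one of the listed diagrams. Since $\h$ is compact, the form $-\sum B_i|_{\h\times\h}$ is positive definite, and $\dim\h$ is even. I choose $J|_\h$ to be an orthogonal complex structure for this form (taking all $\kappa_i=1$) and let $\l$ be its $\i$-eigenspace. Then condition (ii) of Theorem \ref{pluriclosed technical theorem}(b) holds, which gives a pluriclosed $J$.

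\emph{Part (iii).} Let $\g$ be simple. If $\g$ is compact or $\sl(2,\R)$, then it is compact or Hermitian, so (ii) gives a pluriclosed $J$. Otherwise (i) gives a balanced $J$.

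\emph{Main obstacle.} I expect the hardest step to be the claim, in (i), that every noncompact inner simple $\g\neq\sl(2,\R)$ has a positive system with at least two painted simple roots. For non-Hermitian $\g$ this is not even needed, since any diagram of such $\g$ avoids the table. So the real issue is a Hermitian $\g$ of rank $\ge 2$. There I would first try to rely on \cite[Lemma 3.2]{giusti_podesta_2023}. Failing that, I would argue directly: start from the one-painted diagram and apply the reflection $s_\beta$ in a compact simple root $\beta$ adjacent to the painted one. This flips the parity of the painted/unpainted status of the neighbours of $\beta$. I would check case by case that some choice of $\beta$ produces two painted vertices.
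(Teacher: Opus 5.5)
Your reduction is the natural one. The paper states this corollary without a separate proof; it is meant to follow from Theorems \ref{balanced technical theorem}, \ref{pluriclosed technical theorem} and Proposition \ref{symmetrisation 1 prop} exactly as you set it up. Part (ii) and the ``only if'' half of (i) are fine as written, and (iii) follows once (i) is secured. One small slip: $\l$ cannot in general be chosen factor by factor, since in $\su(2)\oplus\su(2)$ each factor has odd rank. You actually choose $\l$ globally, which is all you need.

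\emph{First problem: the claim you call the main obstacle is false.} You assert that every noncompact inner simple $\g\neq\sl(2,\R)$ has a Vogan diagram with two painted vertices. This fails for $\so(2r,1)$, $r\ge 2$. There the compact roots $\pm e_i\pm e_j$ are exactly the long roots, so every positive system paints only its single short simple root. See $\so(4,1)\cong\sp(1,1)$ in Table~\ref{rank 2 table}, which has a unique diagram. So the second bullet of your part (i) fails as stated. It must be replaced by your own later remark: a non-Hermitian inner factor has no diagram in Table~\ref{vogan table} at all. Two painted vertices are needed only for Hermitian factors of rank $\ge 2$.

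\emph{Second problem: your fallback for the Hermitian case does nothing.} If $\beta$ is a compact simple root, then $s_\beta\alpha=\alpha+m\beta$ has the same compactness as $\alpha$ by Corollary~\ref{compactness addition rules}. Hence $s_\beta\Phi^+$ has exactly the same Vogan diagram.

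\emph{The fix: reflect in the painted root.} Take a one-painted diagram from Table~\ref{vogan table} with painted root $\alpha_k$. The simple roots of $s_{\alpha_k}\Phi^+$ are:
\begin{itemize}
\item $-\alpha_k$, which is still non-compact;
\item $\alpha_j+m_j\alpha_k$ with $m_j=-\langle\alpha_j,\alpha_k^\vee\rangle$, whose compactness flips exactly when $m_j$ is odd.
\end{itemize}
In every noncompact diagram of Table~\ref{vogan table}, the painted root is long or the diagram is simply laced. So $m_j=1$ for every neighbour $\alpha_j$. Connectedness and rank $\ge 2$ then give at least two painted vertices. Since $\g$ is inner, $\sigma(s_{\alpha_k}\Phi^+)=-s_{\alpha_k}\Phi^+$ holds automatically.

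This also explains the exception: in $\so(2r,1)$ the painted root is short with a long neighbour, so $m=2$ and nothing flips. The argument is self-contained. It also avoids depending on the exact hypotheses of \cite{giusti_podesta_2023}, which the paper quotes for even-dimensional simple $G$, whereas your Hermitian factor may be odd-dimensional, e.g.\ $\su(2,2)$.
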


Let $\g$ be a simple Lie algebra, and let $G$ denote the adjoint group of $\g$. We say that $\g$ is \textit{compact} if $G$ is compact. We say that $\g$ is \textit{Hermitian} if $G/ K$ is a Hermitian symmetric space, where $K$ is a maximal compact subgroup of $G$. In the notation of \cite{knapp}, the simple Hermitian Lie algebras are precisely $\su(p,q)$, $\mathfrak{so}(2,q)$, $\mathfrak{sp}(2n,\R)$, $\mathfrak{so}^*(2n)$, $E\text{III}$, and $E\text{VII}$ (cf. \cite[page 315]{besse}). These Lie algebras are all inner, but not every inner $\g$ is compact or Hermitian (e.g. $\g = \so(4,1)$).

\begin{examples}
    Let $G$ be a semisimple Lie group, and let $\Gamma$ be a cocompact lattice. The following statements follow immediately from Corollary \ref{main corollary}.
    \begin{enumerate}[\normalfont (i)]
        \item Let $\g = \sl(2,\R) \oplus \sl(2,\R)$. Since $\sl(2,\R) \cong \su(1,1)$ is Hermitian, there exist regular complex structures $J$ such that $(\Gamma \backslash G, J)$ is pluriclosed, but $(\Gamma \backslash G, J)$ is not balanced for any regular $J$.
        \item Let $\g = \su(2,1)$. There exist regular $J_1$ and $J_2$ such that $(\Gamma \backslash G, J_1)$ is balanced and  $(\Gamma \backslash G, J_2)$ is pluriclosed: this follows from the fact that $\su(2,1)$ has two Vogan diagrams, one of which is listed in Table \ref{vogan table} and one of which is not.
        \item Suppose $\g$ is the underlying real Lie algebra of a complex semisimple Lie algebra. Then no  simple factor of $\g$ is inner, so for any regular $J$,  $(\Gamma \backslash G, J)$ is balanced but not pluriclosed.
    \end{enumerate}
\end{examples}

\begin{table}[h]
\centering
\begin{tabular}{c|c|c|c|c|c}
    $\g$ & Vogan diagrams & Inner? & Pluriclosed? & Balanced? & $\dim_\R \g$ \\\hline
    $\sl(2,\C) \cong \so(3,1)$  & \begin{dynkinDiagram}[edge/.style={draw=none},edge length=1cm, root radius=.1cm]{A}{oo}\draw[<->,shorten <=2pt,shorten >=2pt] (root 1) -- (root 2);\end{dynkinDiagram} & no & none & all & 6 \\
    $\su(2) \oplus \su(2)$ & \dynkin[edge/.style={draw=none},edge length=.5cm, root radius=.1cm]{A}{oo} & yes & some & none  & 6 \\
    $\su(2) \oplus \sl(2,\R)$ & \dynkin[edge/.style={draw=none},edge length=.5cm, root radius=.1cm]{A}{o*} & yes & some & none  & 6 \\
    $\sl(2,\R) \oplus \sl(2,\R)$ & \dynkin[edge/.style={draw=none},edge length=.5cm, root radius=.1cm]{A}{**} & yes & some & none  & 6 \\
    $\su(3)$ & \dynkin[edge length=.5cm, root radius=.1cm]{A}{oo} & yes & some & none & 8 \\
    $\su(2,1)$ & \dynkin[edge length=.5cm, root radius=.1cm]{A}{**}, \dynkin[edge length=.5cm, root radius=.1cm]{A}{*o} & yes & some & some & 8 \\
    $\sl(3,\R)$ & \dynkin[edge length=1cm, root radius=.1cm, involutions = {12}]{A}{oo} & no & none & all & 8 \\
    $\so(5) \cong \sp(2)$  & \dynkin[edge length=.5cm, root radius=.1cm]{C}{oo} & yes & some  & none & 10 \\
    $\so(4,1) \cong \sp(1,1)$ & \dynkin[edge length=.5cm, root radius=.1cm]{C}{*o} & yes & none & all  & 10\\
    $\so(3,2) \cong \sp(4,\R)$ & \dynkin[edge length=.5cm, root radius=.1cm]{C}{**}, \dynkin[edge length=.5cm, root radius=.1cm]{C}{o*} & yes & some & some & 10 \\
    $\g_2^{\c}$ & \dynkin[edge length=.5cm, root radius=.1cm]{G}{oo} & yes & some & none & 14 \\
    $\g_2^{\nc}$ & \dynkin[edge length=.5cm, root radius=.1cm]{G}{**}, \dynkin[edge length=.5cm, root radius=.1cm]{G}{*o}, \dynkin[edge length=.5cm, root radius=.1cm]{G}{o*} & yes & none & all & 14 
\end{tabular}
\caption{All real semisimple Lie algebras with $\text{rank}( \g^\C) = 2$ are listed, along with their Vogan diagrams. The Lie algebras $\g_2^{\c}$ and $\g_2^{\nc}$ denote the compact and non-compact real forms of the complex Lie algebra $G_2$, respectively. The fourth and fifth columns indicate whether, among regular complex structures $J$ on $G$, none, some, or all yield pluriclosed or balanced manifolds $(\Gamma \backslash G, J)$, respectively.}
\label{rank 2 table}
\end{table}

\subsection{Article outline and proof strategy}
We recall the basics of left-invariant complex structures on Lie groups in \S \ref{invariant complex structures subsection}. In \S\S \ref{symmeterisation 1 subsection} and \ref{symmeterisation 2 subsection}, we state and prove the symmetrisation procedures for balanced and pluriclosed metrics. 

\S \ref{semisimple Lie algebras section} treats several aspects of real semisimple Lie algebras needed later: Theorems \ref{balanced technical theorem} and \ref{pluriclosed technical theorem} are stated using maximally compact Cartan subalgebras and Vogan diagrams, so we study these in \S\S \ref{Cartan subalgebras subsection}--\ref{maximally compact subsection} and \S\ref{Vogan diagrams subsection}, respectively. The proofs of Theorems \ref{balanced technical theorem} and \ref{pluriclosed technical theorem} rely on a characterisation of the Vogan diagrams in Table \ref{vogan table} (\S \ref{table 1 subsection}), a normalisation of the root vectors (\S \ref{root vectors subsection}), and a description of left-invariant right $T$-invariant metrics on $G$ (\S \ref{right T invariant subsection}), where $T$ is the connected subgroup with Lie algebra $\t$ given in (\ref{t and a definitions}).

We study regular complex structures in  \S \ref{regular complex structures section}. To the best of the author's understanding, the classification of regular complex structures in \cite{snow_1986} is not complete (see Remark \ref{counterexample for snow}). We state and prove  a complete classification of regular complex structures on semisimple Lie groups in \S\S \ref{regular classification subsection} and  \ref{regular structure proof}, respectively. 

We prove Theorems \ref{balanced technical theorem} and \ref{pluriclosed technical theorem} in \S\S \ref{balanced section} and \ref{pluriclosed section}, respectively. By symmetrisation (see Proposition \ref{symmetrizing to right T invariant} and Lemma \ref{AdGT is compact}), it suffices to consider the balanced and pluriclosed equations for left-invariant right $T$-invariant Hermitian metrics on $(G,J)$.
The balanced equation for these metrics is given in Proposition \ref{balanced equation prop}. This equation generalises \cite[Equation 2.3]{giusti_podesta_2023}, which treats the inner case. In \S \ref{balanced pairs subsection}, we determine the solvability of this equation using root system arguments and the Vogan diagram classification from \S \ref{table 1 subsection}.

When $\g$ is not inner, we show that $(G,J)$ does not admit any left-invariant pluriclosed metrics in Proposition \ref{not inner pluriclosed}.  When $\g$ is inner, the pluriclosed equation for left-invariant right $T$-invariant Hermitian metrics on $(G,J)$ is given in Proposition \ref{pluriclosed metrics description}. This equation generalises \cite[Theorem 1.1]{lauret}, which treats the compact case. The solvability of this equation is then determined directly in \S \ref{pluriclosed section}.

\section{Invariant Hermitian structures on Lie groups}
\label{invariant Hermitian section}
\subsection{Invariant complex structures on Lie groups} 
\label{invariant complex structures subsection}Let $G$ be a real connected Lie group. A complex structure $J$ on $G$ is called \textit{left-invariant} if the left translation maps $L_a:G \rightarrow G$ given by $L_a(b) = ab$ are all holomorphic with respect to $J$.

Let $\g$ denote the Lie algebra of $G$, i.e. the left-invariant vector fields on $G$.  If $J$ is a left-invariant complex structure on $G$, then $JX \in \g$ for all $X \in \g$. Thus, $J$ is determined by a  linear map $J:\g \rightarrow \g$. Let $\g^\C = \g \oplus \hat \i \g$ denote the complexification of $\g$, and let $\sigma:\g^\C \rightarrow \g^\C$ be the complex conjugation map $\sigma: X + \hat \i Y \mapsto X - \hat \i Y$ for $X,Y \in \g$. Complexifying $J$, we obtain a $\C$-linear map $J:\g^\C \rightarrow \g^\C$. Consider the $\i$-eigenspace of  $J:\g^\C \rightarrow \g^\C$, which we denote by $\q$. The $-\i$-eigenspace is $\sigma \q$, and we have $\q \oplus \sigma \q = \g^\C$. Moreover, the integrability of $J$ implies that $\q$ is a subalgebra of $\g^\C$. Conversely, every subalgebra $\q$ of $\g^\C$ satisfying $\q \oplus \sigma \q = \g^\C$ arises in this manner. In summary, we have a bijection
\begin{equation}
    \label{invariant complex structures bijection}
    \left\{ \begin{matrix}
    \text{Left-invariant complex } \\ \text{structures $J$ on $G$}
\end{matrix}\right\} \longleftrightarrow\left\{ \begin{matrix}
    \text{Subalgebras $\q$ of $\g^\C$ } \\ \text{such that $\q \oplus \sigma\q = \g^\C$}
\end{matrix}\right\}.
\end{equation}

\begin{example}
    \label{bi invariant complex structure 1}
    A complex structure on $G$ is called \textit{bi-invariant} if both the left and right translation maps are holomorphic. 
    If $J$ is bi-invariant, then we can form a complex Lie algebra $\s$ by equipping $\g$ with $\i \cdot X := JX$. 
    Conversely, if $\s$ is a complex Lie algebra and $\g$ is the underlying real Lie algebra of $\s$, then $J: \g \rightarrow \g$, $X \mapsto \i X$ is a bi-invariant complex structure on $G$. 
    
    Suppose $\s$ is a complex Lie algebra and $\g$ is the underlying real Lie algebra. The following is an isomorphism of complex Lie algebras:
    $$\g^\C \rightarrow \s \oplus \overline \s, \qquad X + \hat \i Y \mapsto (X + \i Y, X - \i Y), \quad X,Y \in \g.$$
    Here, $\overline \s$ is the complex Lie algebra with underlying real Lie algebra $\g$ and scalar multiplication $\lambda \cdot X := \overline \lambda X$, for $\lambda \in \C$ and $X \in \g$, where $\overline \lambda X$ denotes scalar multiplication in $\s$. We identify $\g^\C \cong \s \oplus \overline \s$ via the isomorphism above. Under this identification,  complex conjugation $\sigma$ is given by $\sigma:(X,Y) \mapsto (Y,X)$ for $X,Y \in \g$.
    
    Now, let $J: \g \rightarrow \g$ be the bi-invariant complex structure on $G$ given by $J X  = \i X$. The complexification $J:\g^\C \rightarrow \g^\C$ is given by $J(X,Y) = (\i X, \i Y)$.
    Thus, the $\pm \i$-eigenspaces of $J$ are $\q = \s \oplus 0$ and $\sigma \q = 0 \oplus \overline \s$, respectively.
\end{example}

We say that two left-invariant complex structures $J_1$ and $J_2$ on $G$ are \textit{equivalent} if there exists a Lie algebra automorphism $f: \g \rightarrow \g$ such that $f \circ J_1 = J_2 \circ f$. If $\q_i \subseteq \g^\C$ denotes the $\i$-eigenspace of $J_i: \g^\C \rightarrow \g^\C$, then $J_1$ and $J_2$ are equivalent if and only if there exists a (real) Lie algebra automorphism $f: \g \rightarrow \g$ such that $f \q_1 = \q_2$.

\subsection{Symmetrising Hermitian metrics on $\Gamma \backslash G$}
\label{symmeterisation 1 subsection}

The following result is well-known to experts, and is stated and proved under additional assumptions in \cite[\S 2]{fino_grant_2004}, \cite[Prop 3.6]{ugarte}, \cite[Theorem 7]{belgun} and  \cite[Propositions 2.1 and 2.3]{giusti_podesta_2023}. Although the proof is similar, we include it here for the reader's convenience.
\begin{proposition}
    \label{symmetrisation 1 prop}
    Let $G$ be a Lie group, and let $\Gamma$ be a cocompact lattice. Let $J$ be a left-invariant complex structure on $G$, and let $J$ denote the induced complex structure on $\Gamma \backslash G$. The following are equivalent:
    \begin{enumerate}[\normalfont (i)]
        \item $(\Gamma \backslash G,J)$ admits a K\"ahler/balanced/pluriclosed metric.
        \item $(G,J)$ admits a left-invariant K\"ahler/balanced/pluriclosed metric.
    \end{enumerate} 
\end{proposition}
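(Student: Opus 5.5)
The plan is the standard averaging argument. For (ii) $\Rightarrow$ (i): a left-invariant Hermitian metric $g$ on $(G,J)$ is invariant under left translation by $\Gamma$. Because $J$ is left-invariant, $g$ descends to a Hermitian metric on $(\Gamma\backslash G,J)$. The Kähler, balanced and pluriclosed conditions ($d\omega=0$, $d\omega^{n-1}=0$, $\partial\bar\partial\omega=0$) are local, and the covering $G\to\Gamma\backslash G$ is a local biholomorphism. So the descended metric has the same property.

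For (i) $\Rightarrow$ (ii), I will use the existence of a finite right-$G$-invariant measure on $\Gamma\backslash G$. A group admitting a lattice is unimodular, so $\Gamma\backslash G$ carries a $G$-invariant probability measure $\mu$ under the right action $R_a:\Gamma b\mapsto \Gamma ba$. Let $\tilde\omega$ be the pullback to $G$ of the fundamental form of the given metric; it is left $\Gamma$-invariant. For $X_1,\dots,X_k\in\g$, viewed as left-invariant vector fields, the function $b\mapsto \tilde\omega_b(X_1,\dots)$ is left $\Gamma$-invariant, hence a function on $\Gamma\backslash G$. Define
\[
\omega_0(X_1,X_2):=\int_{\Gamma\backslash G}\tilde\omega(X_1,X_2)\,d\mu .
\]
This is a left-invariant real $2$-form on $G$. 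It is of type $(1,1)$ because $J$ is left-invariant, so each integrand is $J$-invariant. It is positive because it averages positive forms. Hence $\omega_0$ is the fundamental form of a left-invariant Hermitian metric.

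More generally, define the averaging operator $\mathcal A$ on forms on $\Gamma\backslash G$ by $\mathcal A(\alpha)(X_1,\dots,X_k)=\int \alpha(X_1,\dots,X_k)\,d\mu$, which produces left-invariant forms on $G$. The key facts are the following.
\begin{enumerate}[(a)]
\item $\mathcal A$ commutes with $d$. Since $\mu$ is right-invariant, the integral of the Lie derivative $X f$ of any function $f$ along a left-invariant field vanishes; these fields generate right translations. Expanding $d\alpha$ on left-invariant fields via the Cartan formula, the derivative terms integrate to zero and the bracket terms match those of $d\mathcal A(\alpha)$.
\item $\mathcal A$ commutes with $J$ acting on forms, and hence with bidegree projections, $\partial$ and $\bar\partial$.
\end{enumerate}
From (a) and (b) the Kähler and pluriclosed cases are immediate: $d\omega_0=\mathcal A(d\omega)=0$ and $\partial\bar\partial\omega_0=\mathcal A(\partial\bar\partial\omega)=0$.

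The main obstacle is the balanced case, since $\mathcal A(\omega^{n-1})\neq(\mathcal A\omega)^{n-1}$. The way around it is to average $\Omega:=\omega^{n-1}$ instead. $\mathcal A(\Omega)$ is a left-invariant $d$-closed $(n-1,n-1)$-form, and it is strictly positive because it averages strictly positive forms. By Michelsohn's observation, every strictly positive $(n-1,n-1)$-form is $\omega_0^{n-1}$ for a unique positive $(1,1)$-form $\omega_0$; this is a pointwise linear-algebra fact, so $\omega_0$ is left-invariant. Then $\omega_0$ is a left-invariant balanced metric.

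To justify (a) rigorously, I would write $\alpha$ in a global left-invariant frame $e_i$ of $\g$: $\alpha=\sum f_I e^I$ with $f_I$ functions on $\Gamma\backslash G$, so $\mathcal A(\alpha)=\sum(\int f_I\,d\mu)e^I$. Then $d\alpha=\sum df_I\wedge e^I+f_I\,de^I$. The term $\int df_I(e_j)\,d\mu=\int e_j(f_I)\,d\mu$ vanishes by right-invariance of $\mu$, since $e_j$ generates the flow $R_{\exp te_j}$.
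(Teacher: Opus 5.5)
Your proposal is correct and is essentially the paper's argument: you average the left-invariant components of forms over $\Gamma\backslash G$ against the right-invariant volume, use $\int X'(f)\,\mu' = 0$ to get commutation with $d$ (and hence with $\partial,\overline\partial$), and treat the balanced case by averaging $\omega^{n-1}$ and applying Michelsohn's bijection between positive $(1,1)$- and $(n-1,n-1)$-forms. You also spell out the easy descent direction (ii)$\Rightarrow$(i), which the paper leaves implicit.
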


\begin{proof}
     Let $\pi:G \rightarrow M = \Gamma \backslash G$ denote the natural projection map. For each $X \in \g$, there exists a unique smooth vector field $X' \in \mathfrak{X}(M)$ which is $\pi$-related to $X$. 
    Since $G$ admits a cocompact lattice, $G$ is unimodular \cite[Lemma 6.2]{milnor}; by definition, this means that there exists  a nowhere vanishing bi-invariant volume form $\mu$ on $G$. Since $\mu$ is left-invariant, it descends to a nowhere-vanishing form $\mu'$ on $M$.
    For each $k$, define
    $$S: \Omega^k(M) \rightarrow \Lambda^k \g^*, \qquad (S \alpha)(X_1,\ldots, X_k) := \int_M \alpha(X_1',\ldots, X_k') \, \mu',$$
    where  $\Omega^k(M)$ denotes the smooth $k$-forms $\alpha$ on $M$, and $\Lambda^k \g^*$ denotes the left-invariant $k$-forms on $G$. 
    The map $S$ satisfies the following properties: \begin{enumerate}[\normalfont (a)]
        \item $S$ commutes with the exterior derivative $d$.
        \item Extending $S$ to complex $k$-forms in the obvious way, $S$ sends $(p,q)$-forms to $(p,q)$-forms.
        \item $S$ commutes with the Dolbeault operators $\partial$ and $\overline \partial$.
        \item $S$ sends positive $(p,p)$-forms to positive $(p,p)$-forms.
    \end{enumerate}
    We show (a) only; the other properties are easy to show. 
    Let $\alpha$ be a $k$-form on $M$, and let $X_0,\ldots, X_k \in \g$.  We find
    \begin{align*}
        (d S \alpha)(X_0,\ldots, X_k) &= \sum_{i <j} (-1)^{i+j} (S \alpha)([X_i, X_j],X_0,\ldots, \widehat{X_i},\ldots, \widehat{X_j},\ldots, X_k) \\
        &=\int_M \sum_{i <j} (-1)^{i+j} \alpha([X_i', X_j'],X_0',\ldots, \widehat{X_i'},\ldots, \widehat{X_j'},\ldots, X_k') \, \mu' \\
        &= \int_M (d \alpha)(X_0',\ldots, X_k')\, \mu' - \sum_i (-1)^i \int_M X'_i \alpha(X_0',\ldots, \widehat{X_i'},\ldots, X'_k) \, \mu' \\
        &=\int_M (d \alpha)(X_0',\ldots, X_k')\, \mu' \\
        &=  (S d \alpha )(X_0,\ldots, X_k),
    \end{align*}
    where a hat denotes the omission of that element. The second last equality holds from the following fact:
    if $X \in \g$ and $f:M \rightarrow \R$ is a smooth function, then
    \begin{equation}
        \label{int vanish lemma}
        \int_M X'(f )\, \mu' = 0.
    \end{equation}
    Let us prove Equation (\ref{int vanish lemma}). By Cartan's magic formula, we find that 
        $$\int_M \mathcal L_{X'
        }(f \mu') = \int_M (d  \circ \iota_{X'} )(f \mu') + \int_M( \iota_{X'} \circ d)(f \mu') = 0,$$
        where $\mathcal L$ is the Lie derivative, and $\iota$ denotes interior product. The last equality follows from Stokes' theorem and that $f \mu'$ is a top-degree form. The product rule implies that 
        $$0 = \int_M \mathcal L_{X'
        }(f \mu') =\int_M X'(f )\, \mu' + \int_M f\, \mathcal L_{X'}\mu' =  \int_M X'(f )\, \mu' .$$
        The last equality follows because $\mathcal L_{X'}\mu' = 0$:   the flow of $X \in \g$ is given by the family of right translations $t \mapsto R_{\exp(tX)}$, and $\mu$ is right-invariant, so $\pi^* (\mathcal L_{X'
        } \mu') = \mathcal L_X \mu  = 0$.
    
    Now, let $\omega'$ be a positive $(1,1)$-form on $(M,J)$.
    By (a) and (c), if $\omega'$ is K\"ahler or pluriclosed, respectively, then so is $\omega := S \omega'$. 
    Next, suppose $d (\omega')^{n-1} = 0$. By \cite[\S 4]{michelsohn_1982}, the map $\alpha \mapsto \alpha^{n-1}$ is a bijection (on any complex manifold) between positive $(1,1)$-forms and positive $(n-1,n-1)$-forms. Thus, $\Omega' := (\omega')^{n-1}$ is a positive $(n-1,n-1)$-form with $d \Omega' = 0$. By (d) and (a), $\Omega := S \Omega'$ is a left-invariant positive $(n-1,n-1)$-form on $(G,J)$ with $d \Omega = 0$. There exists a unique positive $(1,1)$-form $\omega$ such that $\omega^{n-1} = \Omega$. Uniqueness of $\omega$ implies that $\omega$ is also left-invariant. 
\end{proof}

\subsection{Symmetrising invariant Hermitian metrics on $G$}
\label{symmeterisation 2 subsection} Let $G$ be a Lie group and let $T$ be a connected Lie subgroup of $G$. Let $\g$ and $\t$ be the Lie algebras of $G$ and $T$, respectively.  The following  are equivalent for a left-invariant complex structure $J$ on $G$:
\begin{enumerate}[(i)]
    \item $J$ is \textit{right $T$-invariant}, i.e. the diffeomorphism $R_a: G \rightarrow G$ given by $R_a(b) := ba$ is holomorphic for any $a \in T$.
    \item $J:\g \rightarrow \g$ is \textit{$\Ad_G(T)$-invariant}, i.e. $\Ad(a) \circ J  = J \circ \Ad(a)$ for all $a \in T$. 
    \item $J:\g \rightarrow \g$ is \textit{$\ad(\t)$-invariant}, i.e. $\ad(X) \circ J = J \circ \ad(X)$ for all $X \in \t$. 
    \item The $\i$-eigenspace $\q$ of $J: \g^\C \rightarrow \g^\C$ is \textit{$\ad (\t^\C)$-invariant}, i.e. $[\t^\C, \q] \subseteq \q$.
\end{enumerate}

 The following are equivalent for a left-invariant covariant tensor field $\alpha$ on $G$:
\begin{enumerate}[(i)]
    \item $\alpha$ is \textit{ right $T$-invariant}, i.e. $(R_a)^* \alpha = \alpha$ for all $a \in T$.
    \item $\alpha: \g\times \cdots \times \g \rightarrow \R$ is $\Ad_G(T)$-invariant, i.e. $\Ad(a)^* \alpha = \alpha$ for all $a \in T$.
    \item $\alpha$ is $\ad(\t)$-invariant, i.e. $$\sum_{i=1}^k \alpha(Y_1,\ldots, [X, Y_i],\ldots, Y_k) = 0$$
    for all $X \in \t$ and $Y_1,\ldots, Y_k \in \g$.
\end{enumerate}

\begin{proposition}
    \label{symmetrizing to right T invariant}
    Let $G$ be a Lie group, and let $T$ be a Lie subgroup of $G$ such that $\Ad_G(T) = \{ \Ad(a): \g \rightarrow \g: a \in T\}$ is compact. Let $J$ be a left-invariant complex structure on $G$ which is also right $T$-invariant. The following are equivalent:
    \begin{enumerate}[\normalfont (i)]
        \item $(G,J)$ admits a left-invariant K\"ahler/pluriclosed/balanced metric.
        \item $(G,J)$ admits a left-invariant right $T$-invariant K\"ahler/pluriclosed/balanced metric.
    \end{enumerate}
\end{proposition}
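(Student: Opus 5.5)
The direction (ii)$\Rightarrow$(i) is trivial, so the plan is to prove (i)$\Rightarrow$(ii) by averaging over the compact group $K := \Ad_G(T) \subseteq GL(\g)$, in the same way that the map $S$ was used in Proposition \ref{symmetrisation 1 prop}. Let $dk$ be the normalised Haar measure on $K$. For a left-invariant form $\alpha \in \Lambda^k\g^*$ (real or complex), define
$$A\alpha := \int_K k^*\alpha \, dk,$$
where $(k^*\alpha)(Y_1,\ldots,Y_k) = \alpha(kY_1,\ldots,kY_k)$. By invariance of Haar measure, $A\alpha$ is $\Ad_G(T)$-invariant, i.e.\ right $T$-invariant. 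I will then verify that $A$ has the analogues of properties (a)--(d) from the proof of Proposition \ref{symmetrisation 1 prop}.

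For (a), each $k = \Ad(a)$ with $a \in T$ is a Lie algebra automorphism of $\g$. Since the Chevalley--Eilenberg differential on $\Lambda^\bullet\g^*$ is defined purely from the bracket, it satisfies $d(k^*\alpha) = k^* d\alpha$. Integrating over $K$ (a finite-dimensional linear operation) gives $dA = Ad$. Concretely, $k^*$ is the pullback of left-invariant forms under the automorphism $c_a$ (conjugation by $a$), or equivalently under $R_a$ restricted to left-invariant forms.

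For (b) and (c), by hypothesis $J$ commutes with every $k \in K$. Hence the complexified $k$ preserves $\q$ and $\sigma\q$, so $k^*$ preserves $(p,q)$-types. Because $\partial$ and $\overline\partial$ are the type components of $d$, $A$ commutes with them as well.

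For (d), if $\alpha$ is a positive $(p,p)$-form, then $k^*\alpha$ is positive, since $k$ is a $J$-complex-linear isomorphism. Positivity is a convex cone condition, so the average $A\alpha$ is again positive (strictly positive, because the cone of strictly positive forms is an open convex cone and we integrate a continuous family against a probability measure).

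Now let $\omega$ be a left-invariant Hermitian form. If $\omega$ is Kähler or pluriclosed, then $A\omega$ is a positive $(1,1)$-form with $dA\omega = 0$ or $\partial\overline\partial A\omega = 0$ respectively, by (a)--(c); it defines a right $T$-invariant Hermitian metric. In the balanced case I will argue as before. Set $\Omega := \omega^{n-1}$, which is a closed, left-invariant, positive $(n-1,n-1)$-form. Then $A\Omega$ is closed and positive. By Michelsohn's bijection \cite[\S 4]{michelsohn_1982}, there is a unique positive $(1,1)$-form $\tilde\omega$ with $\tilde\omega^{n-1} = A\Omega$. Uniqueness, together with the facts that $A\Omega$ is $K$-invariant and that each $k$ preserves $J$ and positivity, gives $k^*\tilde\omega = \tilde\omega$. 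So $\tilde\omega$ is right $T$-invariant and balanced.

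The only mildly delicate point is making sure that strict positivity survives averaging and that the compactness hypothesis is what makes $A$ well defined. Here the compactness is that of $\Ad_G(T)$, not of $T$ itself. This is why the averaging is done over $K \subseteq GL(\g)$, which acts on $\Lambda^\bullet\g^*$ through $\Ad$, rather than over $T$.
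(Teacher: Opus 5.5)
Your proposal is correct and follows essentially the same route as the paper. Both arguments average left-invariant forms over the compact group $\Ad_G(T)$ with respect to an invariant measure, check that the averaging operator produces $\Ad_G(T)$-invariant forms, commutes with $d$, $\partial$ and $\overline\partial$, and preserves $(p,q)$-types and positivity, and then handle the balanced case through Michelsohn's bijection and its uniqueness. Your justifications for the properties the paper leaves as straightforward are sound: each $\Ad(a)$ is a Lie algebra automorphism commuting with $J$, and strict positivity survives averaging by the convexity argument.
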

\begin{proof}
    Let $\nu$ be a right-invariant nowhere vanishing volume form on $ \widehat T := \Ad_G(T)$. For each $k$, define 
    $$S: \Lambda^k \g^* \rightarrow \Lambda^k \g^*, \qquad (S \alpha)(X_1,\ldots, X_k) := \int_{\widehat T} (\varphi^* \alpha)(X_1,\ldots, X_k) \; \nu(\varphi). $$
    This map satisfies the following properties:
    \begin{enumerate}[(a)]
        \item $S \alpha$ is right $T$-invariant for any $\alpha \in \Lambda^k \g^*$.
        \item $S$ commutes with the exterior derivative $d$.
        \item Extending $S$ to complex forms in the obvious way, $S$ sends $(p,q)$-forms to $(p,q)$-forms.
        \item $S$ commutes with the Dolbeault operators $\partial$ and $\overline \partial$.
        \item $S$ sends positive $(p,p)$-forms to positive $(p,p)$-forms.
    \end{enumerate}
    We show (a) only; (b), (c), (d) and (e) are straightforward. Fix $\alpha \in \Lambda^k \g^*$ and $X_1,\ldots, X_k \in \g$. Define $f: \widehat T \rightarrow \R$ by $f(\varphi) = (\varphi^* \alpha) (X_1,\ldots, X_k)$. Given $\psi \in \widehat T$, we find 
    \begin{align*}
        (\psi^* S \alpha) (X_1,\ldots, X_k) &= (S \alpha) (\psi X_1,\ldots, \psi X_k) \\
        &= \int_{\widehat T}( \varphi \circ \psi)^* \alpha (X_1,\ldots, X_k) \; \nu(\varphi) \\
        &= \int_{\widehat T} (f \circ R_\psi) \; \nu =\int_{\widehat T} (R_{\psi^{-1}})^*((f \circ R_\psi) \; \nu )\\
        &=\int_{\widehat T} (f \circ R_\psi \circ R_{\psi^{-1}}) \;  (R_{\psi^{-1}})^*\nu \\
        &= \int_{\widehat T} f \; \nu = (S\alpha)(X_1,\ldots, X_k),
    \end{align*}
    where $R_\psi: \widehat T \rightarrow \widehat T$ is given by $\varphi \mapsto \varphi \circ \psi$. 

    The rest of the argument is similar to the proof of Proposition \ref{symmetrisation 1 prop}.
\end{proof}

\section{Real semisimple Lie algebras and Vogan diagrams}
\label{semisimple Lie algebras section}
\subsection{Cartan subalgebras of a real semisimple Lie algebra}
\label{Cartan subalgebras subsection}Throughout this section, let  $\g$ denote a real semisimple Lie algebra. The complexification $\g^\C$ of $\g$ is a complex semisimple Lie algebra. A\textit{ Cartan subalgebra of $\g$} is a subalgebra $\h$ such that $\h^\C$ is a Cartan subalgebra of $\g^\C$. 

Let $\h$ be a Cartan subalgebra of $\g$. Let $\Phi$ denote the roots of $(\g^\C, \h^\C)$, and let $\g^\C_\alpha$ denote the root space of $\alpha \in \Phi$. Let $\sigma: \g^\C \rightarrow \g^\C$ denote the complex conjugation map $X + \hat \i Y \mapsto X - \hat \i Y$ for $X,Y \in \g$. Since $\sigma \h^\C = \h^\C$, we obtain a root system automorphism $\sigma: \Phi \rightarrow \Phi$ given by $(\sigma \alpha)(H) = \overline{\alpha (\sigma H)}$ for all $H \in \h^\C$. Moreover, $\sigma:\g^\C \rightarrow \g^\C$ permutes the root spaces via $\sigma \g^\C_\alpha = \g^\C_{\sigma \alpha}$. We say that $\alpha \in \Phi$ is \textit{real} if $\sigma \alpha = \alpha$, \textit{imaginary} if $\sigma \alpha = - \alpha$, and \textit{complex} if $\alpha$ is neither real nor imaginary. 

Now, let $B$ denote the Killing form of $\g^\C$. Fix a root $\alpha \in \Phi$, and let $E_\alpha \in \g^\C_\alpha$ be a non-zero root vector. Then $\alpha$ is imaginary if and only if $B(E_\alpha, \sigma E_\alpha) \neq 0$. We say that $\alpha$ is \textit{compact} if $B(E_\alpha, \sigma E_\alpha) < 0$, and \textit{non-compact} if $B(E_\alpha, \sigma E_\alpha) > 0$. 

A \textit{Cartan involution} of $\g$ is a Lie algebra automorphism $\theta: \g \rightarrow \g$ such that $\theta^2 = \Id$ and $B(X, \theta X) < 0$ for all non-zero $X \in \g$. If $\theta$ is a Cartan involution, then  $B_\theta := -B(\cdot, \theta \cdot)$ is a positive-definite symmetric bilinear form on $\g$, and we have a vector space decomposition $\g =\k \oplus \p$, called the \textit{Cartan decomposition}, where $\k$ and $\p$ are the $+1$ and $-1$ eigenspaces of $\theta$, respectively. Observe that $B$ is negative-definite on $\k$ and positive-definite on $\p$. These subspaces satisfy the following relations:
\begin{equation}
    \label{Cartan decomposition relations}
    [\k , \k ]\subseteq \k, \qquad [\k, \p] \subseteq \p, \qquad [\p, \p] \subseteq \k.
\end{equation}

If $\h$ is a Cartan subalgebra of $\g$, then there exists a (not necessarily unique) Cartan involution $\theta$ of $\g$ such that $\theta \h = \h$ \cite[Proposition 6.59]{knapp}, and we have a decomposition
$$\h = \t \oplus \a, \qquad \text{where} \qquad \t = \h \cap \k, \quad \a = \h \cap \p.$$
Such a Cartan involution permutes the roots via $(\theta \alpha)(H) = \alpha(\theta^{-1} H)$ for all $H \in \h^\C$ and the root spaces via $\theta \g^\C_\alpha = \g^\C_{\theta \alpha}$.
\begin{proposition}
    \label{Cartan subalgebras facts}
    Let $\h$ be a Cartan subalgebra of $\g$, and let $\theta$ be a Cartan involution with $\theta \h = \h$.
    \begin{enumerate}[\normalfont (i)]
        \item The subspaces $\t$ and $\a$ defined above can be written as
        \begin{align}
         \label{t and a definitions}
            \begin{split}
                \t &= \{H \in \h : \alpha(H) \text{ is imaginary for all $\alpha \in \Phi$}\}, \\ \a &= \{H \in \h : \alpha(H) \text{ is real for all $\alpha \in \Phi$}\}.
            \end{split} 
        \end{align}
        In particular, $\t$ and $\a$ are independent of the choice of Cartan involution $\theta$.
        \item View $\t^*$ and $\a^*$ as subsets of $(\h^\C)^*$ in the obvious manner. Then the  real and imaginary parts of $\alpha \in \Phi$ with respect to $\sigma$ are 
        $$\Re(\alpha) = \alpha|_\a, \qquad \i\Im(\alpha) = \alpha|_{\t}.$$
        Moreover, we have $\R \Phi = \i \t^* \oplus \a^*$.
        
        \item $\theta \alpha = - \sigma \alpha$ for all $\alpha \in \Phi$.
        \item Let $\alpha \in \Phi$. Then $\alpha$ is  compact if and only if $\g^\C_\alpha \subseteq \k^\C$, and $\alpha$ is non-compact if and only if $\g^\C_\alpha \subseteq \p^\C$. 
    \end{enumerate}
    
\end{proposition}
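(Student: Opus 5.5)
The plan is to work with the Cartan decomposition $\g = \k \oplus \p$ and use $\ad$-semisimplicity of elements of $\h$ together with the sign properties of the Killing form. I first prove (i). Since $\theta\h=\h$, we have $\h = \t\oplus\a$ with $\t\subseteq\k$, $\a\subseteq\p$. For $H\in\k$, the operator $\ad(H)$ is skew-symmetric with respect to the positive-definite inner product $B_\theta$: indeed $B_\theta([H,X],Y) = -B([H,X],\theta Y) = B(X,[H,\theta Y]) = B(X,\theta[H,Y]) = -B_\theta(X,[H,Y])$, using $\theta H = H$. Hence the eigenvalues of $\ad(H)$ are imaginary, so $\alpha(H)\in \i\R$ for all $\alpha\in\Phi$ and $H\in\t$. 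Similarly, for $H\in\p$ the operator $\ad(H)$ is $B_\theta$-symmetric, so $\alpha(H)\in\R$ for $H\in\a$.

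Conversely, let $H = T + A\in\h$ with $T\in\t$, $A\in\a$, and suppose $\alpha(H)$ is imaginary for all $\alpha$. Then $\alpha(A) = \alpha(H)-\alpha(T)$ is both real and imaginary, so $\alpha(A)=0$ for all roots $\alpha$. Since the roots span $(\h^\C)^*$, this forces $A=0$. The description of $\a$ follows in the same way. The right-hand sides make no reference to $\theta$, which gives the independence claim.

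For (ii), write $\alpha = \alpha|_\t + \alpha|_\a$ under $\h^\C = \t^\C\oplus\a^\C$. By (i), $\alpha|_\t$ takes imaginary values on $\t$ and $\alpha|_\a$ takes real values on $\a$. From the definition $(\sigma\alpha)(H) = \overline{\alpha(\sigma H)}$ and $\sigma H = H$ for $H\in\h$, we get $\sigma\alpha = -\alpha|_\t + \alpha|_\a$. Hence $\Re\alpha = \tfrac12(\alpha+\sigma\alpha) = \alpha|_\a$ and $\i\Im\alpha = \alpha|_\t$. The inclusion $\R\Phi\subseteq \i\t^*\oplus\a^*$ is now immediate. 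Equality follows by comparing real dimensions: $\Phi$ spans $(\h^\C)^*$ over $\C$, so $\R\Phi$ is a real form of dimension $\dim\h$. A cleaner route is to note that $\R\Phi$ is $\sigma$-stable and spans over $\C$, so both $\Re$ and $\i\Im$ projections land in $\R\Phi$, and together they span $\a^*$ and $\i\t^*$.

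For (iii), $\theta$ acts on $\h$ by $+1$ on $\t$ and $-1$ on $\a$. Therefore $(\theta\alpha)(H) = \alpha(\theta H) = \alpha|_\t(H) - \alpha|_\a(H)$, which equals $-\sigma\alpha$ by the formula from (ii).

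For (iv), the main point is that an imaginary root space lies in $\k^\C$ or in $\p^\C$. By (iii), an imaginary $\alpha$ satisfies $\theta\alpha=\alpha$, so $\theta$ preserves the one-dimensional space $\g^\C_\alpha$ and acts on it by $\pm1$. Thus $\g^\C_\alpha\subseteq\k^\C$ or $\g^\C_\alpha\subseteq\p^\C$. For a nonzero $E = X+\hat\i Y$, we have $B(E,\sigma E) = B(X,X)+B(Y,Y)$. This is negative if $E\in\k^\C$, since then $X,Y\in\k$ and $B$ is negative definite there. It is positive if $E\in\p^\C$. If $\alpha$ is not imaginary, then $B(E_\alpha,\sigma E_\alpha)=0$. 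In that case $\theta\alpha = -\sigma\alpha\ne\alpha$, so $\theta$ does not preserve $\g^\C_\alpha$, and $\g^\C_\alpha$ is contained in neither $\k^\C$ nor $\p^\C$. This gives both equivalences.

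The only mildly delicate step is the dimension/spanning argument in (ii). Everything else is direct computation.
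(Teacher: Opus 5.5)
Your proof is correct and follows essentially the same route as the paper: $B_\theta$-(skew-)symmetry of $\ad(H)$ for (i), the formulas $\alpha \pm \sigma\alpha$ plus a dimension count for (ii), reading off $\theta$ on $\i\t^*\oplus\a^*$ for (iii), and the $\pm 1$ action of $\theta$ on imaginary root spaces together with $B(Z,\sigma Z)=B(X,X)+B(Y,Y)$ for (iv). One small point in (ii): the step ``spans over $\C$, so $\R\Phi$ is a real form'' does not follow on its own. Complex spanning only gives $\dim_\R \R\Phi \ge \dim \h$, but together with your inclusion $\R\Phi \subseteq \i\t^*\oplus\a^*$ this already forces equality, so no separate real-form fact is needed.
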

\begin{proof}
    To show (i), fix $H \in \h$, and consider $\ad(H): \g \rightarrow \g$. If $H \in \t$, then $\ad(H)$ is skew-symmetric with respect to the inner product $B_\theta$, so $\alpha(H)$ is imaginary for all $\alpha \in \Phi$ in this case. Analogously, if $H \in \a$, then $\ad(H)$ is symmetric with respect to $B_\theta$, so $\alpha(H)$ is real for all $\alpha \in \Phi$ in this case. The opposite inclusions follow because $\C\Phi  = (\h^\C)^*.$

    The first part of (ii) follows from (i) and the formulas $2\Re(\alpha) = \alpha + \sigma \alpha $ and $2 \i \Im(\alpha) = \alpha - \sigma \alpha$. In particular, it follows that $\R \Phi \subseteq \i \t^* \oplus \a^*$. The opposite inclusion holds because $\R \Phi$ is a real vector space of dimension $\dim \h = \dim(\i \t^* \oplus \a^*)$. Part (iii) follows from (ii), because $\theta = \Id$ on $\i \t^*$ and $\theta = - \Id$ on $\a^*$.

    Finally, let us show (iv). First, by (iii), we find that $\theta \g^\C_\alpha = \g^\C_{\theta \alpha} = \g^\C_{- \sigma \alpha}$. Since $\g^\C_\alpha$ is one-dimensional, it follows that $\alpha$ is imaginary if and only if $\g^\C_\alpha \subseteq \k^\C$ or $\g^\C_\alpha \subseteq \p^\C$. Now, suppose $\g^\C_\alpha \subseteq \k^\C$. Fix a non-zero $Z = X + \hat \i Y \in \g^\C_\alpha$, where $X,Y \in \k$. Then $B(Z, \sigma Z) = B(X,X) + B(Y,Y) < 0$, so $\alpha$ is compact. Analogously, if $\g^\C_\alpha \subseteq \p^\C$, then $\alpha$ is non-compact.
\end{proof}

Thanks to Proposition \ref{Cartan subalgebras facts} Part (iv) and the relations (\ref{Cartan decomposition relations}), we immediately have the following:
\begin{corollary}
    \label{compactness addition rules}
    Let $\h$ be a Cartan subalgebra of $\g$. Let $\alpha, \beta$ be imaginary roots of $(\g^\C, \h^\C)$, and suppose $\alpha + \beta$ is a root. Then $\alpha + \beta$ is imaginary, and the compactness of $\alpha + \beta$ is determined by the following rules:
    \begin{align}
        \begin{split}
            \normalfont\text{compact} + \text{compact} &= \normalfont\text{compact}, \\
            \normalfont\text{compact} + \text{non-compact} &= \normalfont\text{non-compact}, \\
            \normalfont\text{non-compact} + \text{non-compact} &= \normalfont\text{compact}.
        \end{split}
    \end{align}
\end{corollary}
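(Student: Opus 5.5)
The argument runs entirely through Proposition \ref{Cartan subalgebras facts}. We first show that $\alpha+\beta$ is imaginary. Since $\sigma$ acts on $\Phi$ as a root system automorphism, it is additive on roots: for $H \in \h^\C$ we have $(\sigma(\alpha+\beta))(H) = \overline{(\alpha+\beta)(\sigma H)} = (\sigma\alpha)(H) + (\sigma\beta)(H)$. Because $\sigma\alpha = -\alpha$ and $\sigma\beta = -\beta$, it follows that $\sigma(\alpha+\beta) = -(\alpha+\beta)$, so $\alpha+\beta$ is imaginary.

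Next we locate the root spaces. By Proposition \ref{Cartan subalgebras facts}(iv), each imaginary root $\gamma$ satisfies either $\g^\C_\gamma \subseteq \k^\C$, which happens exactly when $\gamma$ is compact, or $\g^\C_\gamma \subseteq \p^\C$, which happens exactly when $\gamma$ is non-compact. We also use the standard fact about complex semisimple Lie algebras that $[\g^\C_\alpha, \g^\C_\beta] = \g^\C_{\alpha+\beta}$ whenever $\alpha$, $\beta$ and $\alpha+\beta$ are all roots. In particular this bracket is nonzero, so $\g^\C_{\alpha+\beta}$ is spanned by $[E_\alpha, E_\beta]$ for nonzero root vectors $E_\alpha$ and $E_\beta$.

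The relations (\ref{Cartan decomposition relations}) extend $\C$-bilinearly to $\k^\C$ and $\p^\C$, giving $[\k^\C,\k^\C]\subseteq\k^\C$, $[\k^\C,\p^\C]\subseteq\p^\C$ and $[\p^\C,\p^\C]\subseteq\k^\C$. The three rules now follow.
\begin{enumerate}[(i)]
\item If $\alpha$ and $\beta$ are both compact, then $[E_\alpha,E_\beta] \in \k^\C$, so $\g^\C_{\alpha+\beta} \subseteq \k^\C$.
\item If one root is compact and the other non-compact, then $[E_\alpha,E_\beta] \in \p^\C$, so $\g^\C_{\alpha+\beta} \subseteq \p^\C$.
\item If both are non-compact, then $[E_\alpha,E_\beta] \in \k^\C$, so $\g^\C_{\alpha+\beta} \subseteq \k^\C$.
\end{enumerate}
Applying Proposition \ref{Cartan subalgebras facts}(iv) once more, these containments say that $\alpha+\beta$ is compact, non-compact and compact respectively. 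Since $\k^\C \cap \p^\C = 0$, each case gives a single answer.

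The only point that needs care is the nonvanishing $[\g^\C_\alpha,\g^\C_\beta]\neq 0$. This is a standard result for root decompositions, and it is what lets the Cartan decomposition relations determine which of $\k^\C$ or $\p^\C$ contains $\g^\C_{\alpha+\beta}$.
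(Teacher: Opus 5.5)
Your proposal is correct and is essentially the paper's argument: the paper derives this corollary directly from Proposition~\ref{Cartan subalgebras facts}(iv) and the relations (\ref{Cartan decomposition relations}), exactly as you do. You also spell out two points the paper leaves implicit: that $\sigma$ is additive on $(\h^\C)^*$, so $\alpha+\beta$ is imaginary, and that $[\g^\C_\alpha,\g^\C_\beta]=\g^\C_{\alpha+\beta}\neq 0$, which is genuinely needed to decide whether $\g^\C_{\alpha+\beta}$ lies in $\k^\C$ or in $\p^\C$.
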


\begin{example}[Compact type]
    \label{compact example}
    We say that a real semisimple Lie algebra $\g$ is \textit{compact} if $\g$ is the Lie algebra of a compact Lie group. A real semisimple Lie algebra is compact if and only if its Killing form $B$ is negative-definite \cite[Chapter IV.4]{knapp}. 
    
    Assume that $\g$ is compact. The Cartan subalgebras of $\g$ are precisely the maximal abelian subspaces of $\g$. Up to automorphisms of $\g$, there is a unique Cartan subalgebra $\h$ of $\g$. Since $B$ is negative-definite, $- B(\cdot, \sigma \cdot) : \g^\C \times \g^\C \rightarrow \C$ is a (positive-definite) Hermitian inner product on $\g^\C$. Thus, if $\alpha$ is a root of $(\g^\C, \h^\C)$ and $E_\alpha \in \g^\C_\alpha$ is a non-zero root vector, then $B(E_\alpha, \sigma E_\alpha) < 0$, so every root of $(\g^\C, \h^\C)$ is imaginary and compact.
\end{example}

\begin{example}[Inner type]
    \label{inner example}
    Let $\g$ be a real semisimple Lie algebra, and fix a Cartan involution $\theta$ of $\g$. We say that $\g$ is \textit{inner} if $\theta$ is an inner automorphism of $\g$. For example, if $\g$ is compact, then $\g$ is also inner, because the identity map $\Id: \g \rightarrow \g$ is a Cartan involution.  A real semisimple $\g$ is inner if and only if there exists a Cartan subalgebra $\h$ of $\g$ such that $\h \subseteq \k$ \cite[Chapter VI Problems 10 and 11]{knapp}. In this case,  $\a = 0$ in the decomposition $\h = \t \oplus \a$ defined in (\ref{t and a definitions}), so all roots are imaginary.
\end{example}

\begin{example}[Complex type]
    \label{complex example}
    We say that a real semisimple Lie algebra $\g$ is \textit{complex} if $\g$ is the underlying real Lie algebra of a complex semisimple Lie algebra $\s$. From Example \ref{bi invariant complex structure 1}, recall that we have an isomorphism $\g^\C \cong \s \oplus \overline \s$, where $\overline \s$ is the complex Lie algebra obtained by replacing scalar multiplication in $\s$ with $\lambda \cdot X = \overline \lambda X$, and $\g$ embeds into $\s \oplus \overline \s$ via $X \mapsto (X,X)$. 

    The Cartan subalgebras of $\g$ are precisely the Cartan subalgebras of $\s$ viewed as real subalgebras of $\g$. Thus, $\g$ has a unique Cartan subalgebra $\h$, up to automorphisms of $\g$. Under the identification $\g^\C \cong \s \oplus \overline \s$, we may write $\h^\C = \h \oplus \overline \h$. The roots of $(\g^\C, \h^\C)$ are $\Phi = \Phi_1 \sqcup \Phi_2$, where $\Phi_1$ and $\Phi_2$ are the roots of $(\s, \h)$ and $(\overline \s, \overline \h)$, respectively.  The induced automorphism $\sigma: \Phi \rightarrow \Phi$ swaps the two factors $\Phi_1 \leftrightarrow \Phi_2$. In particular, all roots are complex.
\end{example}

\subsection{Maximally compact Cartan subalgebras}
\label{maximally compact subsection}
Let $\h$ be a Cartan subalgebra of $\g$, and let $\Phi$ denote the roots of $(\g^\C, \h^\C)$. We say that $\h$ is \textit{maximally compact} if $\sigma: \Phi \rightarrow \Phi$ has no fixed points, i.e. there are no real roots. 

\begin{remark}
    \label{uniqueness of maximally compact remark}
    Every real semisimple Lie algebra $\g$ admits a maximally compact Cartan subalgebra $\h$. If $\h'$ is another maximally compact Cartan subalgebra, then there exists an inner automorphism $f$ of $\g$ such that $\h' = f \h$ \cite[Propositions 6.59, 6.60 and 6.61]{knapp}.
\end{remark}

\begin{proposition}
    \label{prop maximally compact equivalences}
    Let $\h$ be a Cartan subalgebra of $\g$, and write $\h = \t \oplus \a$ as in (\ref{t and a definitions}). The following are equivalent:
    \begin{enumerate}[\normalfont (i)]
        \item $\h$ is maximally compact.
        \item $\dim \t$ is maximal among all Cartan subalgebras of $\g$.
        \item If $\theta$ is a Cartan involution of $\g$ with $\theta \h = \h$, then $\t$ is a maximal abelian subspace of $\k = \text{\normalfont Fix}(\theta)$.
        \item There exists a set of positive roots $\Phi^+$ such that $\sigma \Phi^+ =- \Phi^+$.
    \end{enumerate}
\end{proposition}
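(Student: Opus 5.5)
I will prove the cycle (i)$\Rightarrow$(iv)$\Rightarrow$(i), and separately (i)$\Leftrightarrow$(iii)$\Leftrightarrow$(ii), fixing a Cartan involution $\theta$ with $\theta\h=\h$ throughout, so that $\h=\t\oplus\a$ and, by Proposition \ref{Cartan subalgebras facts}, $\theta\alpha=-\sigma\alpha$ and $\R\Phi=\i\t^*\oplus\a^*$.

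For (i)$\Leftrightarrow$(iv): if $\sigma\Phi^+=-\Phi^+$ then a real root $\alpha=\sigma\alpha$ would lie in both $\Phi^+$ and $-\Phi^+$, which is impossible; so (iv)$\Rightarrow$(i). Conversely, if there are no real roots then every root has $\alpha|_\t\neq0$, since $\alpha|_\t=0$ means $\alpha=\Re\alpha$, i.e. $\sigma\alpha=\alpha$. Choose $H\in\t$ generic so that $\alpha(H)\neq0$ for all $\alpha$ (finitely many hyperplanes in $\t$), and set $\Phi^+=\{\alpha:\ -\i\alpha(H)>0\}$. Since $H\in\t$ is real, $(\sigma\alpha)(H)=\overline{\alpha(H)}=-\alpha(H)$ because $\alpha(H)$ is imaginary; hence $\sigma\Phi^+=-\Phi^+$.

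For (i)$\Leftrightarrow$(iii): let $\t'\supseteq\t$ be abelian in $\k$. I will show that if there is a real root then $\t$ is not maximal abelian in $\k$, and conversely. Suppose $\alpha$ is real, so $\theta\alpha=-\alpha$, giving $\theta\g^\C_\alpha=\g^\C_{-\alpha}$. Take $0\neq E\in\g^\C_\alpha\cap\g$ (possible since $\sigma$ preserves $\g^\C_\alpha$). Then $X=E+\theta E\in\k$ is nonzero and commutes with $\t$, because $\alpha|_\t=0$, and $X\notin\h$. So $\t$ is not maximal. Conversely, suppose $\t$ is not maximal abelian in $\k$, and pick $X\in\k\setminus\t$ centralising $\t$. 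Decompose $X$ in $\g^\C=\h^\C\oplus\bigoplus\g^\C_\alpha$. The component of $X$ in $\h$ lies in $\k\cap\h=\t$, so after subtracting it some root-space component survives, and since it commutes with $\t$ it lies in some $\g^\C_\alpha$ with $\alpha|_\t=0$. Such an $\alpha$ is real, so (i) fails. This is the step requiring the most care: the centraliser of $\t$ in $\g^\C$ is $\h^\C\oplus\bigoplus_{\alpha|_\t=0}\g^\C_\alpha$, and one needs the nonzero component.

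For (ii): the key claim is that the maximal abelian subspaces $\t$ of $\k$ are all $\mathrm{Ad}(K)$-conjugate, a standard compact Lie group fact. Given any Cartan subalgebra $\h'$, conjugate it (by \cite[Prop 6.59]{knapp}) to one stable under our $\theta$; then $\t'=\h'\cap\k$ is abelian in $\k$, so it is contained in a maximal abelian subspace, and hence $\dim\t'\le d$, where $d$ is the common dimension of the maximal abelian subspaces. It remains to see that the bound $d$ is attained and characterises (iii). Given (iii), $\dim\t=d$, so (iii)$\Rightarrow$(ii). Conversely, if $\t$ is not maximal, I will run the construction above using a real root $\alpha$ (Cayley transform). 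Replace $\h$ by $\ker(\alpha|_\a)\oplus\t\oplus\R X$, where $X=E+\theta E$ is normalised suitably. One checks this is abelian (since $[X,\ker\alpha]=0$) and is a Cartan subalgebra of the same dimension with strictly larger compact part, so (ii) fails. Verifying that this is a Cartan subalgebra, i.e. that its complexification is $\mathrm{Ad}(\exp(\tfrac{\pi}{4}\cdot))$-conjugate to $\h^\C$ via the $\sl_2$-triple of $\alpha$, is the main technical point; I would cite \cite[Prop 6.61]{knapp} if possible.
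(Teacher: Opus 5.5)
Your proposal is correct. For (i)$\Leftrightarrow$(iv) it is essentially the paper's argument. The paper orders $\R\Phi=\i\t^*\oplus\a^*$ lexicographically with the $\t$-coordinates first, while you use a regular element $H\in\t$. In both cases positivity is decided by $\alpha|_\t$, which never vanishes when there are no real roots and which $\sigma$ negates.

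The genuine difference is in (i)$\Leftrightarrow$(ii)$\Leftrightarrow$(iii). The paper does not prove this part; it cites Knapp's Propositions 6.60 and 6.70. You unpack it instead, in two steps.
\begin{enumerate}[(1)]
\item For (i)$\Leftrightarrow$(iii), you use that the centraliser of $\t$ in $\g^\C$ is $\h^\C\oplus\bigoplus_{\alpha|_\t=0}\g^\C_\alpha$, so real roots are exactly what stops $\t$ from being maximal abelian in $\k$. Where you assert that the $\h$-component of $X$ lies in $\t$, add the one-line reason: the projection $\g^\C\to\h^\C$ along the root spaces commutes with both $\sigma$ and $\theta$.
\item For (ii)$\Leftrightarrow$(iii), you combine two facts. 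First, a uniform bound $\dim\t'\le d$, from $K$-conjugacy of maximal abelian subspaces of $\k$ together with $\theta$-stable conjugation of an arbitrary Cartan subalgebra. Second, a Cayley transform through a real root, which enlarges a non-maximal $\t$.
\end{enumerate}

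What your route buys is a transparent, largely self-contained explanation of why ``no real roots'' and ``maximal $\dim\t$'' coincide. It also makes plain that (iii) does not depend on the choice of $\theta$.

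The cost is that the one non-formal input still has to be imported: that $\ker(\alpha|_\a)\oplus\t\oplus\R(E+\theta E)$ is again a ($\theta$-stable) Cartan subalgebra. The right source for this is the Cayley transform discussion behind Knapp's Proposition 6.70, the result the paper itself cites. Proposition 6.61 is not it; the paper uses that one for conjugacy of maximally compact Cartan subalgebras.
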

\begin{proof}
    The equivalence of (i), (ii) and (iii) follows from \cite[Propositions 6.60 and 6.70]{knapp}. Let us show that (i) and (iv) are equivalent. If $\h$ is not maximally compact, then there is a root $\alpha \in \Phi$ such that $\sigma \alpha = \alpha$. Thus, any set of positive roots $\Phi^+$ with $\sigma \Phi^+ = - \Phi^+$ must satisfy $\emptyset \neq (\sigma \Phi^+) \cap \Phi^+ = (- \Phi^+)\cap \Phi^+ = \emptyset$, a contradiction.

    Conversely, suppose $\h$ is maximally compact.  Let $y_1,\ldots, y_k$ be a basis for $\t^*$, and let $x_1,\ldots, x_\ell$ be a basis for $\a^*$. Since $\Phi \subseteq \i \t^* \oplus \a^*$, we can write every root as 
    $$\alpha = a_1  \i y_1 + \cdots + a_k \i y_k + a_{k+1} x_1 + \cdots + a_{k + \ell} x_\ell,$$
    where $a_i$ are real numbers. Since $x_i, y_j: \h^\C \rightarrow \C$ are real, we find that 
    $$\sigma \alpha = -a_1  \i y_1 - \cdots - a_k \i y_k + a_{k+1} x_1 + \cdots + a_{k + \ell} x_\ell.$$
    Let $i(\alpha)$ be the first index where $a_{i(\alpha)} \neq 0$. Let $\Phi^+$ be the set of roots $\alpha$ such that  $a_{i(\alpha)} (\alpha) > 0$. It is easy to see that $\Phi^+$ is a set of positive roots. Since $\h$ is maximally compact, at least one of $a_1,\ldots, a_k$ is non-zero for any root $\alpha$. Thus, $\sigma \Phi^+ = - \Phi^+$.
\end{proof}

\begin{example}
    \label{positive roots inner}
    Suppose $\g$ is inner and $\h$ is maximally compact. By Example \ref{inner example} and Proposition \ref{prop maximally compact equivalences} (ii), it follows that $\a = 0$, so $\sigma = - \Id$  on $\Phi$. Consequently, any set of positive roots $\Phi^+ \subseteq \Phi$ satisfies $\sigma \Phi^+ = - \Phi^+$.
\end{example}

\subsection{Vogan diagrams}
\label{Vogan diagrams subsection}
A \textit{Vogan diagram} is a triple $\V = (\D, \theta, \P)$, where $\D$ is a (not necessarily connected) Dynkin diagram, $\theta: \D \rightarrow \D$ is a graph automorphism of order $1$ or $2$, and $\P$ is a subset of the vertices of $\D$ fixed by $\theta$. Pictorially, we denote the two-cycles of $\theta$ with a bidirectional arrow, and paint the vertices in $\P$.

\begin{example}
    \label{complex vogan diagram 1}
    Consider $\D = \D_1 \sqcup \D_2$, where $\D_1$ and $\D_2$ are two copies of the same Dynkin diagram. Let $\theta:\D \rightarrow \D$ be the map that swaps $\D_1$ and $\D_2$. Then $\V = (\D, \theta, \emptyset)$ is a Vogan diagram.   
\end{example}

\begin{algorithm}
    \label{semisimple g to Vogan diagram}
    We now explain how Vogan diagrams arise from a real semisimple Lie algebra $\g$. Fix a maximally compact Cartan subalgebra $\h$ of $\g$, and let $\Phi$ denote the roots of $(\g^\C, \h^\C)$. Choose a set of positive roots $\Phi^+$ such that $\sigma \Phi^+ = - \Phi^+$. Such a choice always exists by Proposition \ref{prop maximally compact equivalences}. We call $(\g, \h, \Phi^+)$ a \textit{Vogan triple}. A choice of positive roots corresponds to a set of simple roots $\Delta$, which induces a Dynkin diagram $\D$. Observe that $- \sigma: \Phi \rightarrow \Phi$ is an automorphism of order $1$ or $2$  which preserves the positive roots. Thus, $-\sigma \Delta  = \Delta$,    so we obtain a graph automorphism $- \sigma: \D \rightarrow \D$ of order $1$ or $2$. Finally, let $\P$ denote the fixed vertices of $\D$ which correspond to the simple non-compact roots. Then $\V = (\D, - \sigma, \P)$ is a Vogan diagram, which we call the \textit{Vogan diagram of $(\g, \h, \Phi^+)$}. 
\end{algorithm}

\begin{example}
    \label{Vogan diagrams of types}
    Let $\g$ be a real semisimple Lie algebra, fix a maximally compact $\h$, and consider the roots of $(\g^\C, \h^\C)$.
    \begin{enumerate}[(i)]
        \item If $\g$ is compact, then all roots are compact by Example \ref{compact example}.  The only Vogan diagram of $\g$ is the Dynkin diagram of $\g^\C$ with trivial involution and no painted vertices.
        \item If $\g$ is inner, then all roots are imaginary by Example \ref{inner example}. Thus, every Vogan diagram of $\g$ has trivial involution.
        \item Let $\g$ be the underlying real Lie algebra of a complex semisimple Lie algebra $\s$. From Example \ref{complex example}, any Cartan subalgebra $\h$ of $\g$ is maximally compact, the root system of $(\g^\C, \h^\C)$ is $\Phi = \Phi_1 \sqcup \Phi_2$, where $\Phi_i$ is the root system of $\s$,  and $\sigma$ swaps $\Phi_1$ and $\Phi_2$. We have a bijection
        $$\left\{\text{Positive roots $\Phi^+_1$ of $\Phi_1$}\right\} \longleftrightarrow \left\{\text{Positive roots $\Phi^+$ of $\Phi$ with $\sigma \Phi^+ = - \Phi^+$}\right\} $$
        given by $\Phi_1^+ \mapsto \Phi^+ = \Phi_1^+ \sqcup (- \sigma \Phi_1^+)$. Any such choice determines the same Vogan diagram. The Dynkin diagram induced by $\Phi^+$ is $\D = \D_1 \sqcup \D_2$, where $\D_1$ is induced by $\Phi_1^+$ and $\D_2$ is induced by $-\sigma \Phi_1^+$. The involution $-\sigma: \D \rightarrow \D$ swaps $\D_1$ and $\D_2$. Since there are no imaginary roots, the Vogan diagram induced by $\Phi^+$ is given by Example \ref{complex vogan diagram 1}. 
    \end{enumerate}
\end{example}

\begin{proposition}{\cite[Theorems 6.74 and 6.88]{knapp}}
    \label{Vogan diagrams correspondence}
    Algorithm \ref{semisimple g to Vogan diagram} induces a bijection
     $$\left\{ \text{\normalfont Vogan triples }(\g, \h, \Phi^+)  \right\}/ \cong  \;\; \longleftrightarrow  \;\; \left\{\text{\normalfont Vogan diagrams $\V$} \right\},$$
     where we declare $(\g, \h, \Phi^+) \cong (\g', \h', (\Phi')^+) $ if there exists a Lie algebra isomorphism $f: \g \rightarrow \g'$ sending $(\h, \Phi^+)$ to $(\h', (\Phi')^+)$. In particular, every Vogan diagram arises from Algorithm \ref{semisimple g to Vogan diagram}.
\end{proposition}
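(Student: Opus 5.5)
The statement is cited from Knapp, so the plan is to reconstruct its three ingredients: well-definedness, injectivity, and surjectivity of the map induced by Algorithm \ref{semisimple g to Vogan diagram}.

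\emph{Well-definedness.} An isomorphism $f:\g\to\g'$ sending $(\h,\Phi^+)$ to $(\h',(\Phi')^+)$ complexifies to an isomorphism commuting with the conjugations $\sigma,\sigma'$. It therefore carries simple roots to simple roots and intertwines $-\sigma$ with $-\sigma'$. It also preserves the sign of $B(E_\alpha,\sigma E_\alpha)$, so compact and non-compact simple roots correspond. Hence isomorphic triples give isomorphic Vogan diagrams.

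\emph{Injectivity.} Start from the diagram and fix a Cartan involution $\theta$ with $\theta\h=\h$. By Proposition \ref{Cartan subalgebras facts}(iii), $\theta$ acts on roots as $-\sigma$, which is exactly the diagram involution. Choose a Chevalley basis of $\g^\C$ adapted to $(\h^\C,\Phi^+)$. We want to show that $\theta$ is determined, up to conjugation by $\exp(\ad \h^\C)$, by two pieces of data: the diagram automorphism, and the signs $\pm1$ by which it acts on $\g^\C_\alpha$ for fixed simple $\alpha$. These signs are $+1$ for compact roots and $-1$ for non-compact ones, by Proposition \ref{Cartan subalgebras facts}(iv). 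On two-cycles of simple roots, one can rescale root vectors using $\h^\C$ so that $\theta$ simply permutes the normalized vectors.

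Next, take the compact real form $\mathfrak u$ spanned by the standard combinations $\i H_\alpha$, $X_\alpha-X_{-\alpha}$, $\i(X_\alpha+X_{-\alpha})$. After the normalization above, $\theta$ preserves $\mathfrak u$. We then recover $\g=\k_0\oplus\i\p_0$, where $\mathfrak u=\k_0\oplus\p_0$ is the $\pm1$ eigenspace decomposition of $\theta|_{\mathfrak u}$. So the diagram determines $\g$, $\h$ and $\Phi^+$ up to isomorphism.

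\emph{Surjectivity.} Given a Vogan diagram, define $\theta$ on $\g^\C$ by prescribing it on the Chevalley generators $X_{\pm\alpha}$, $\alpha\in\Delta$. It should permute them by the diagram automorphism and multiply by $-1$ exactly at the painted vertices. Because the diagram automorphism preserves the Cartan matrix, Serre's presentation extends this to an automorphism of $\g^\C$, and it has order $2$. It commutes with the compact real form $\mathfrak u$ for a suitable choice of generators. Setting $\g:=\k_0\oplus\i\p_0$ gives a real form in which $\h=(\h^\C\cap\k_0)\oplus(\h^\C\cap\i\p_0)$ is a Cartan subalgebra. It remains to check two things. First, $\sigma$ acts on roots as $-\theta$, so $\sigma\Phi^+=-\Phi^+$; by Proposition \ref{prop maximally compact equivalences}, $\h$ is then maximally compact. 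Second, the painted vertices are exactly the non-compact simple roots.

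The main obstacle is the injectivity step, specifically the normalization. We must show that the action of $\theta$ on all root spaces is forced by its action on the simple ones, and handle the scalars on two-cycles, which is the point where complex roots create difficulty. To resolve it, I would use that an automorphism fixing $\h^\C$ pointwise and each simple root vector is the identity, since $\g^\C$ is generated by the simple root vectors and their negatives. The rescaling freedom on two-cycles comes from $\exp(\ad \h^\C)$, so that remaining freedom has to be absorbed by such a conjugation.
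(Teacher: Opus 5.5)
Your well-definedness and surjectivity steps are sound and follow the standard route behind the cited results; the paper gives no argument of its own, only the reference to Knapp. The gap is in injectivity, at ``We then recover $\g=\k_0\oplus\i\p_0$''.

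A real form of $\g^\C$ is determined by its complexified Cartan involution $\theta$ together with \emph{its own} compact dual $\mathfrak u_{\g}:=\k\oplus\i\p$. It is not determined by $\theta$ and an arbitrary $\theta$-stable compact real form. Your $\mathfrak u$ is built from a Chevalley basis chosen without reference to $\g$. Your normalisation then conjugates by $e^{\ad H}$ with $H\in\h^\C$ arbitrary, which replaces $\g$ by $e^{\ad H}\g$ and $\mathfrak u_{\g}$ by $e^{\ad H}\mathfrak u_{\g}$. Nothing shows that the latter equals $\mathfrak u$, so knowing that the normalised $\theta$ preserves $\mathfrak u$ does not give $e^{\ad H}\g=\mathfrak u^{\theta}\oplus\i\,\mathfrak u^{-\theta}$.

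This already fails for $\g^\C=\sl(2,\C)$ and $\theta=\Id$. For $H\neq 0$ real diagonal, $e^{\ad H}\su(2)$ is a compact real form containing the same Cartan subalgebra as $\su(2)$, but it differs from $\su(2)$. So ``recover'' can only mean ``up to an isomorphism preserving $\h$ and $\Phi^+$'', and producing that isomorphism is exactly what is missing. The two-cycle scalars, which you single out as the main obstacle, are the easy part.

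To close the gap, normalise relative to $\mathfrak u_{\g}$ itself.
\begin{enumerate}
\item Since $\mathfrak u_{\g}\cap\h^\C=\t\oplus\i\a$ is where all roots take imaginary values, the conjugation $\tau$ of $\g^\C$ with respect to $\mathfrak u_{\g}$ maps $\g^\C_\alpha$ to $\g^\C_{-\alpha}$. So you can choose $X_{-\alpha}:=-\tau X_\alpha$ with $[X_\alpha,X_{-\alpha}]=H_\alpha$; this is the compact case of Lemma \ref{sigma adapted root vectors lemma}, with $\tau$ in place of $\sigma$.
\item Because $\theta\tau=\tau\theta$, if $\theta X_\alpha=aX_{\theta\alpha}$ then $\theta X_{-\alpha}=\overline a X_{-\theta\alpha}$. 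Then $\theta H_\alpha=H_{\theta\alpha}$ forces $|a|=1$.
\item The unimodular scalars on two-cycles can be removed by $e^{\ad H}$ with $H\in\i\a$. This works because the functionals $\theta\alpha-\alpha$ vanish on $\t$ and are linearly independent for distinct two-cycles. Such $e^{\ad H}$ fixes $\h^\C$ pointwise and preserves $\mathfrak u_{\g}$, so it replaces the triple by an isomorphic one with the same compact dual.
\item Now take two triples with the same diagram. The isomorphism theorem, applied to these normalised simple root vectors, gives $\varphi:\g^\C\to(\g')^\C$. The maps $\varphi\tau\varphi^{-1}$ and $\tau'$ are conjugate-linear automorphisms agreeing on generators, hence equal; likewise $\varphi\theta\varphi^{-1}=\theta'$.
\item Therefore $\varphi$ maps $\g=\mathfrak u_{\g}^{\theta}\oplus\i\,\mathfrak u_{\g}^{-\theta}$ onto $\g'$, and $(\h,\Phi^+)$ onto $(\h',(\Phi')^+)$.
\end{enumerate}

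Alternatively, you can keep your Chevalley form $\mathfrak u$. You would then have to prove that two $\theta$-stable compact real forms meeting $\h^\C$ in the same subspace are conjugate by an automorphism that commutes with $\theta$ and fixes $\h^\C$ pointwise. Either way, this ingredient must be supplied before injectivity is established.
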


We say that a Vogan diagram $\V$ is a \textit{Vogan diagram of $\g$} if $\V$ is the Vogan diagram of some Vogan triple $(\g, \h, \Phi^+)$.
\begin{corollary}
    \label{vogan diagrams corollary}
    Let $\g$ be a semisimple Lie algebra. Fix a maximally compact Cartan subalgebra $\h$ of $\g$. Let $\Phi$ denote the roots of $(\g^\C, \h^\C)$. We have a bijection 
    $$\left\{ \text{\normalfont Positive roots $\Phi^+$ with $\sigma \Phi^+ = - \Phi^+$} \right\}/ \cong  \;\; \longleftrightarrow  \;\; \left\{\text{\normalfont Vogan diagrams $\V$ of $\g$} \right\},$$
    where we declare $\Phi_1^+ \cong \Phi_2^+$ if there exists a Lie algebra automorphism $f: \g \rightarrow \g$ with $f \h = \h$ and $f \Phi^+_1 = \Phi^+_2$.
\end{corollary}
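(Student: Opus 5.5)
The plan is to deduce Corollary \ref{vogan diagrams corollary} from Proposition \ref{Vogan diagrams correspondence}, using Remark \ref{uniqueness of maximally compact remark} to pin down the Cartan subalgebra. Define the map $\Phi^+ \mapsto \V(\g,\h,\Phi^+)$ via Algorithm \ref{semisimple g to Vogan diagram}. It is well defined on equivalence classes. Indeed, suppose $f$ is an automorphism of $\g$ with $f\h=\h$ and $f\Phi_1^+=\Phi_2^+$. Then $f$ is an isomorphism of Vogan triples $(\g,\h,\Phi_1^+)\cong(\g,\h,\Phi_2^+)$, so the two triples have the same Vogan diagram by Proposition \ref{Vogan diagrams correspondence}. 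One can also see this directly: $f^\C$ commutes with $\sigma$, maps simple roots to simple roots, preserves the Cartan matrix, and preserves compactness because $B(f E_\alpha,\sigma fE_\alpha)=B(E_\alpha,\sigma E_\alpha)$.

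\emph{Surjectivity.} Let $\V$ be a Vogan diagram of $\g$. By definition $\V$ comes from some Vogan triple $(\g,\h',\Phi'^+)$, where $\h'$ is maximally compact. By Remark \ref{uniqueness of maximally compact remark} there is an inner automorphism $f$ of $\g$ with $f\h'=\h$. Set $\Phi^+:=f\Phi'^+$. Since $f$ commutes with $\sigma$, we have $\sigma\Phi^+=-\Phi^+$. The map $f$ is then an isomorphism of Vogan triples $(\g,\h',\Phi'^+)\cong(\g,\h,\Phi^+)$, so $\Phi^+$ maps to $\V$.

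\emph{Injectivity.} Suppose $\Phi_1^+$ and $\Phi_2^+$ give the same Vogan diagram. By the injectivity part of Proposition \ref{Vogan diagrams correspondence}, the triples $(\g,\h,\Phi_1^+)$ and $(\g,\h,\Phi_2^+)$ are isomorphic. This means there is an isomorphism $f:\g\to\g$, that is, an automorphism, sending $\h$ to $\h$ and $\Phi_1^+$ to $\Phi_2^+$. That is exactly the relation $\Phi_1^+\cong\Phi_2^+$.

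The only point needing care is how ``same Vogan diagram'' is interpreted. The bijection in Proposition \ref{Vogan diagrams correspondence} is to Vogan diagrams up to isomorphism of diagrams, so the identification of diagrams must use the same convention on both sides. I expect this bookkeeping to be the sole obstacle; the rest is formal.
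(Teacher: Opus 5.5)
Your proposal is correct and follows the paper's route: the paper states this corollary as an immediate consequence of Proposition \ref{Vogan diagrams correspondence}, exactly as you deduce it. Your explicit appeal to Remark \ref{uniqueness of maximally compact remark} to conjugate an arbitrary maximally compact Cartan subalgebra onto the fixed $\h$ is the step the paper leaves implicit in the surjectivity direction.
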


We say that a Vogan diagram $\V$ is \textit{connected} if 
\begin{enumerate}[(i)]
    \item the underlying Dynkin diagram is connected, or
    \item the underlying Dynkin diagram has two connected components, and $\theta$ sends one component to the other.
\end{enumerate}
A Vogan diagram $\V$ is connected if and only if the corresponding $\g$ is simple. If $\V$ is not connected, then its connected components correspond to the simple factors of $\g$.

\begin{proposition}
    \label{characterisation of type prop}
    Let $\g$ be a real semisimple Lie algebra. Fix a maximally compact Cartan subalgebra $\h$ of $\g$, and let $\Phi$ denote the roots of $(\g^\C, \h^\C)$. In each row of the following table, the listed conditions are equivalent:
    \begin{center}
        \textnormal{
        \begin{tabular}
        {>{\centering\arraybackslash}m{2.1cm}|
         >{\centering\arraybackslash}m{4.5cm}|
         m{7cm}}
        Type of $\g$ & Roots  & \multicolumn{1}{c}{Vogan diagrams} \\ \hline
        Compact & All roots $\Phi$ are compact.
        & $\g$ has a single Vogan diagram with trivial involution and no painted vertices. \\ \hline
        Inner & All roots $\Phi$ are imaginary.
        & All Vogan diagrams of $\g$ have trivial involution. \\ \hline
        Complex & All roots $\Phi$ are complex.
        & $\g$ has a single Vogan diagram, which is described in Example \ref{complex vogan diagram 1}.
    \end{tabular}}
    \end{center}
\end{proposition}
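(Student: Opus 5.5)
We prove each row separately. The recurring device is the dictionary between root types and Vogan diagram data: the involution of the diagram is $-\sigma$ restricted to the simple roots $\Delta$, and the painted vertices are the simple roots that are imaginary and non-compact. Combined with Examples \ref{compact example}, \ref{inner example} and \ref{complex example}, together with Proposition \ref{prop maximally compact equivalences} and Corollary \ref{vogan diagrams corollary}, this reduces each row to bookkeeping. The main point to watch is that ``all roots'' refers to the whole of $\Phi$, whereas a Vogan diagram only records information about $\Delta$. We will pass from $\Delta$ to $\Phi$ using linearity of $\sigma$ and Corollary \ref{compactness addition rules}.

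\emph{Inner row.} Suppose $\g$ is inner. By Example \ref{inner example} there is a Cartan subalgebra contained in $\k$, so $\dim\t$ is as large as possible. By Proposition \ref{prop maximally compact equivalences}(ii), $\a=0$ for our maximally compact $\h$, so every root is imaginary, exactly as in Example \ref{positive roots inner}. Now suppose all roots are imaginary. Then $-\sigma=\Id$ on $\Phi$, so every Vogan diagram has trivial involution. Conversely, suppose some Vogan diagram has trivial involution. Then $-\sigma$ fixes every simple root, and since $\sigma$ acts linearly on $\R\Phi$, we get $-\sigma=\Id$ on $\Phi$. Hence all roots are imaginary, so $\a=0$ by Proposition \ref{Cartan subalgebras facts}(ii). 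This gives $\h=\t\subseteq\k$, and Example \ref{inner example} shows $\g$ is inner.

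\emph{Compact row.} If $\g$ is compact, Example \ref{compact example} shows that all roots are compact. If all roots are compact, then all are imaginary, so $\sigma=-\Id$. Every $\Phi^+$ is then admissible, and any two choices are conjugate under the Weyl group. The remaining step is to realise this Weyl group element by an automorphism of $\g$ preserving $\h$. Here we use that the compact roots form the root system of $(\k^\C,\h^\C)$, so $W(\k,\t)$ acts through $\mathrm{Ad}(K)$. By Corollary \ref{vogan diagrams corollary}, $\g$ therefore has a single Vogan diagram, and it has trivial involution and no painted vertices. Conversely, suppose $\g$ has a Vogan diagram with trivial involution and no paint. Then every simple root is imaginary and compact, and every root is an integer combination of simple roots. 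Although not every partial sum is a root, every positive root can be built from simple roots by adding one simple root at a time, staying inside $\Phi$. Induction with Corollary \ref{compactness addition rules} then shows all positive roots are compact, and negatives of compact roots are compact. Consequently $\g^\C=\h^\C\oplus\bigoplus\g^\C_\alpha\subseteq\k^\C$ by Proposition \ref{Cartan subalgebras facts}(iv), so $\p=0$. This means $B$ is negative-definite, and $\g$ is compact.

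\emph{Complex row.} If $\g$ is complex, Example \ref{complex example} shows all roots are complex, and Example \ref{Vogan diagrams of types}(iii) identifies the unique Vogan diagram. Conversely, if the Vogan diagram is that of Example \ref{complex vogan diagram 1}, then Proposition \ref{Vogan diagrams correspondence} identifies $\g$ with the real Lie algebra of the diagram of Example \ref{Vogan diagrams of types}(iii). That algebra is the underlying real form of a complex $\s$. The difficult implication is ``all roots complex $\Rightarrow$ $\g$ complex''. In this case no simple root is fixed by $-\sigma$, since a fixed simple root would be imaginary. On each connected component of $\D$, the involution is a diagram automorphism without fixed vertices. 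Diagram automorphisms of a connected Dynkin diagram always fix some vertex: the $A_{2n}$ flip fixes no vertex, but it sends the simple root $\alpha_n$ to $\alpha_{n+1}$, and then the root $\alpha_n+\alpha_{n+1}$ is fixed and hence imaginary, which is excluded. So every component must be swapped with another component. This is precisely the diagram of Example \ref{complex vogan diagram 1}, and the previous converse then applies.
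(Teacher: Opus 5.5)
Your proof is correct, but for the compact and inner rows it takes a different route from the paper. The paper argues cyclically. For the step from Vogan diagrams back to the type of $\g$, it invokes the bijection of Proposition \ref{Vogan diagrams correspondence}: a diagram determines $\g$ up to isomorphism, so it suffices to exhibit one algebra realising it. These are the compact real form, an inner algebra via \cite[Chapter VI.10]{knapp} and \cite{chuah}, or $\s$ viewed as a real Lie algebra.

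You instead read the root data directly off the diagram. A trivial involution forces $-\sigma=\Id$ on $\Phi$ by linearity. No paint, together with Corollary \ref{compactness addition rules}, forces every root to be compact. You then conclude via the Cartan decomposition: $\a=0$ gives $\h\subseteq\k$, and all root spaces lying in $\k^\C$ gives $\p=0$. This avoids the classification input for the inner row and uses only facts the paper already relies on. It also proves slightly more, since a single Vogan diagram with trivial involution already forces $\g$ to be inner.

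For the complex row you follow the paper's route through Proposition \ref{Vogan diagrams correspondence}. However, you supply the justification the paper omits when it asserts that $-\sigma$ ``pairs up the connected components''. A component preserved by $-\sigma$ would carry a fixed-point-free involution, which can only be the flip of $A_{2n}$, and then $\alpha_n+\alpha_{n+1}$ is an imaginary root.

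A few small fixes:
\begin{itemize}
\item The sentence ``diagram automorphisms of a connected Dynkin diagram always fix some vertex'' is false as written. State it as: every involution other than the $A_{2n}$ flip fixes a vertex, which is a check over $A_n$, $D_n$ and $E_6$. It should also be applied only to components \emph{preserved} by $-\sigma$.
\item In the compact-row converse you tacitly use $\h^\C\subseteq\k^\C$. This follows from the trivial involution, as in your inner row.
\item Your Weyl-group argument in the compact row is correct but unnecessary. Vogan diagrams are abstract objects, and when all roots are compact every admissible $\Phi^+$ yields the undecorated Dynkin diagram of $\g^\C$. That is why the paper calls this direction obvious.
\end{itemize}
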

\begin{proof}
    Examples \ref{compact example}, \ref{inner example} and \ref{complex example} show that the first column implies the second. Let us show  that the second column implies the third; the first two rows are obvious. For the complex row, suppose that all roots are complex, and  $\V = (\D, \theta,\P)$ is any Vogan diagram. Then $\P = \emptyset$, and $\theta$ pairs up the connected components of $\D$, so $\V$ is described by Example \ref{complex vogan diagram 1}. 
    
    Finally, let us show that the third column implies the first. By Proposition \ref{Vogan diagrams correspondence}, every Vogan diagram is the Vogan diagram of a unique semisimple Lie algebra. By Example \ref{Vogan diagrams of types} (i), the Vogan diagram with $\theta = \Id$ and $\P = \emptyset$ is induced by the compact real form of the complex Lie algebra corresponding to $\D$.
    Next, any Vogan diagram with $\theta = \Id$ is the Vogan diagram of an inner semisimple Lie algebra by \cite[Chapter VI.10]{knapp} and \cite{chuah}.  Finally, Example \ref{Vogan diagrams of types} (iii) shows that the Vogan diagram described by Example \ref{complex vogan diagram 1} is induced by the complex Lie algebra corresponding to $\D_1$, viewed as a real Lie algebra.
\end{proof}

We say that a Vogan diagram is \textit{compact}/\textit{inner}/\textit{complex} if the corresponding semisimple Lie algebra $\g$ is compact/inner/complex. The simple $\g$ and connected Vogan diagrams which are neither complex nor inner are given by Table \ref{mixed diagrams}.

\begin{table}[h]
\centering
\begin{tabular}{c|c}
    Family & Vogan diagram \\ \hline
    $\sl(n+1,\R)$, $n$ even & \dynkin[edge length=1cm, root radius=.1cm, involutions={16;25;34} ]{A}{oo.oo.oo} \\ 
    $\sl(n+1,\R)$, $n$ odd & \dynkin[edge length=1cm, root radius=.1cm, involutions={15;24} ]{A}{o.o*o.o} \\ 
    $\sl((n+1)/2,\mathbb H)$, $n$ odd & \dynkin[edge length=1cm, root radius=.1cm, involutions={15;24} ]{A}{o.ooo.o} \\ 
    $\mathfrak{so}(p,q)$, both $p$ and $q$ odd  & \dynkin[edge length=1cm, root radius=.1cm, involutions={*[relative, out=0, in=180]45}]{D}{tt.too} \\
    $E\text{I}$ or $E\text{IV}$ &  \dynkin[edge length=1cm, root radius=.1cm, involutions={16;35}]{E}{ototoo}
\end{tabular}
\caption{Connected Vogan diagrams which are neither complex nor inner. A vertex \dynkin[edge length=1cm, root radius=.1cm]{A}{t} denotes either \dynkin[edge length=1cm, root radius=.1cm]{A}{*} or \dynkin[edge length=1cm, root radius=.1cm]{A}{o}.}
\label{mixed diagrams}
\end{table}

\begin{remark}
    \label{cartan involution inner}
    When $\g$ is inner and $\h$ is maximally compact, there is a unique Cartan involution $\theta$ such that $\theta \h = \h$. Indeed, if $\theta$ is such a Cartan involution, then the corresponding Cartan decomposition must be given by 
    $$\k^\C = \h^\C \oplus \bigoplus_{\alpha \in \Phi_{\text{c}}} \g^\C_\alpha, \qquad \p^\C = \bigoplus_{\alpha \in \Phi_{\text{nc}}} \g^\C_\alpha,$$
    where $\Phi_{\text{c}}$ and $\Phi_{\text{nc}}$ denote the compact and non-compact roots, respectively.
\end{remark}

\subsection{Characterisations of Vogan diagrams in Table \ref{vogan table}} 
\label{table 1 subsection}Let $\V$ be a Vogan diagram. By Proposition \ref{Vogan diagrams correspondence}, $\V$ determines a root system $\Phi$, a partition of $\Phi$ into complex, compact and non-compact roots, and a choice of positive roots $\Phi^+$. Let $\alpha_1,\ldots, \alpha_r$ be the simple roots determined by $\Phi^+$, which we identify with the vertices of $\V$. Any root $\alpha \in \Phi$ can be written as 
$$\alpha = \sum_{i=1}^r c_i(\alpha) \alpha_i,$$
where $c_i(\alpha)$ are integers.

If the underlying Dynkin diagram of $\V$ is connected (i.e. $\V$ is connected and not complex), then there is a unique root $\widetilde \alpha \in \Phi$ such that $\sum_{i=1}^r c_i(\widetilde \alpha)$ is maximal. We call $\widetilde \alpha$ the \textit{highest root}.

\begin{lemma}
    \label{single painted vertex lemma}
    Suppose $\V$ is a connected inner Vogan diagram with a single painted vertex, $\alpha_k$. The following are equivalent:
    \begin{enumerate}[\normalfont(i)]
        \item $\V$ is listed in Table \ref{vogan table}.
        \item The coefficient of $\alpha_k$ in the highest root $\widetilde \alpha$ is $1$, i.e. $c_k(\widetilde \alpha) = 1$.
        \item The coefficient of $\alpha_k$ in any positive root is at most $1$.
        \item $\alpha_k + \alpha$ is not a root for any non-compact positive root $\alpha$.
    \end{enumerate}
\end{lemma}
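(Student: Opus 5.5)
We prove the cycle (ii)$\Rightarrow$(iii)$\Rightarrow$(iv)$\Rightarrow$(ii) by root-system arguments, and then handle (i)$\Leftrightarrow$(ii) by a case check against the classification.

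For (ii)$\Leftrightarrow$(iii), recall that every positive root $\alpha$ satisfies $c_i(\alpha)\le c_i(\widetilde\alpha)$ for all $i$. This holds because $\widetilde\alpha$ dominates every positive root: starting from any positive root, one reaches $\widetilde\alpha$ by successively adding simple roots. So (ii) implies (iii), and (iii) implies (ii) since $\widetilde\alpha$ is itself a positive root.

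For the link with (iv), we use Corollary \ref{compactness addition rules}. Since $\V$ is inner, all roots are imaginary, and a simple root $\alpha_i$ is non-compact iff $i=k$. Iterating the addition rules along a chain of simple roots shows that a positive root $\alpha$ is non-compact iff $c_k(\alpha)$ is odd.

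To show (iii)$\Rightarrow$(iv), take a non-compact positive root $\alpha$. Then $c_k(\alpha)$ is odd, hence equal to $1$ under (iii). If $\alpha_k+\alpha$ were a root, it would be a positive root with $c_k=2$, contradicting (iii).

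To show (iv)$\Rightarrow$(iii), suppose some positive root has $c_k\ge 2$, and choose such a root $\beta$ of minimal height. Every positive root is obtained from a simple root by successively adding simple roots with each partial sum a root. Along such a chain ending at $\beta$, the coefficient $c_k$ rises from $1$ to $2$ at some step. That step has the form $\gamma\mapsto\gamma+\alpha_k$, where $c_k(\gamma)=1$. Then $\gamma$ is a non-compact positive root and $\gamma+\alpha_k$ is a root, contradicting (iv). The only care needed here is the standard fact that such chains exist, which holds for any positive root.

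For (i)$\Leftrightarrow$(ii), note that connected inner diagrams with one painted vertex are of type $A_r,\dots,G_2$ with no involution. We compare with the highest roots, using the Bourbaki labelling of Table \ref{vogan table}:
\begin{itemize}
\item $A_r$: $\widetilde\alpha=\sum\alpha_i$, so every coefficient is $1$. This matches the table, where any single painted vertex is allowed.
\item $B_r$: $\widetilde\alpha=\alpha_1+2\alpha_2+\dots+2\alpha_r$. Only $\alpha_1$ has coefficient $1$.
\item $C_r$: $\widetilde\alpha=2\alpha_1+\dots+2\alpha_{r-1}+\alpha_r$. Only $\alpha_r$ has coefficient $1$.
\item $D_r$: the coefficient-one vertices are $\alpha_1,\alpha_{r-1},\alpha_r$. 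The last two are exchanged by a diagram automorphism, so up to equivalence they give the two pictures listed.
\item $E_6$: the coefficient-one vertices are $\alpha_1$ and $\alpha_6$, which are symmetric.
\item $E_7$: the only coefficient-one vertex is $\alpha_7$.
\item $E_8$, $F_4$, $G_2$: no coefficient equals $1$, and correspondingly nothing of these types appears in the table.
\end{itemize}
For $C_r$, $r\ge3$, in the table and $B_r$, $r\ge2$, there is low-rank overlap ($B_2=C_2$, $D_3=A_3$), which is consistent with the table's rank restrictions.

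The main obstacle is matching the table's drawings to the correct Bourbaki vertices, especially for the $D_r$ and $E_6$ symmetries and the rank conventions. The root-theoretic equivalences are routine.
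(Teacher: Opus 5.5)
Your proof is correct and follows essentially the same route as the paper: highest-root dominance for (ii)$\Rightarrow$(iii), the coefficient count $c_k(\alpha_k+\alpha)=2$ for (iii)$\Rightarrow$(iv), a simple-root chain in which $c_k$ passes from $1$ to $2$ for the converse, and (i)$\Leftrightarrow$(ii) read off from the highest-root coefficients, which the paper simply cites from Humphreys. Your explicit parity rule ($\alpha$ is non-compact iff $c_k(\alpha)$ is odd) spells out a step the paper leaves implicit, and the lower bound $c_k(\widetilde\alpha)\ge 1$ needed for (iii)$\Rightarrow$(ii) follows from the same dominance applied to $\alpha_k$ itself.
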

\begin{proof}
    \cite[pages 58 and 66]{humphreys} immediately imply that (i) and (ii) are equivalent. By \cite[Theorem 5.5 (d)]{knapp} or \cite[Theorem 10.1]{hall}, the highest root $\widetilde \alpha$ has the following property: if $\alpha$ is another positive root, then $c_i(\alpha) \leq c_i(\widetilde \alpha)$ for $i = 1,\ldots, r$. Therefore,  (ii) implies (iii). 

    Next, suppose (iii) holds, and let $\alpha \in \Phi^+$ be non-compact. Since $\alpha_k$ is the only non-compact simple root, we must have $c_k(\alpha) = 1$. Thus, $c_k(\alpha_k + \alpha) = 2$, so $\alpha_k + \alpha$ cannot be a root. Thus, (iii) implies (iv).

    Finally, let us show that (iv) implies (ii) by showing the contrapositive. Suppose $c_k(\widetilde \alpha) \geq 2$. By \cite[Chapter II Problem 7]{knapp}, we may write $\widetilde \alpha = \alpha_{i_1} + \alpha_{i_2} + \cdots + \alpha_{i_m},$
    where each partial sum from the left is a positive root. Let $\ell \geq 2$ be the second index such that $i_\ell = k$. Then $\alpha := \alpha_{i_1} + \alpha_{i_2} + \cdots + \alpha_{i_{\ell -1}}$ is a non-compact positive root, and $\alpha_k + \alpha$ is a root.
\end{proof}

Let $\D$ denote the underlying Dynkin diagram of $\V$.
A \textit{path} in $\D$ is a finite sequence of distinct vertices $\alpha_{i_1}, \alpha_{i_2},\ldots, \alpha_{i_\ell}$ such that $\alpha_{i_t}$ and $\alpha_{i_{t + 1}}$ share an edge for all $1 \leq t \leq \ell -1$. 
\begin{lemma}
    \label{root chain lemma}
    Suppose $\D$ is connected and  $\alpha_{i_1}, \alpha_{i_2},\ldots, \alpha_{i_\ell}$ is a path. Then $\alpha_{i_j} + \alpha_{i_{j+1}} + \cdots + \alpha_{i_k}$ is a root for any $1 \leq j \leq k \leq \ell$. 
\end{lemma}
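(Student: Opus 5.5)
Since a subpath of a path is again a path, and the reversed sequence of a path is a path, it suffices to prove that for any path $\alpha_{i_1},\ldots,\alpha_{i_\ell}$ in $\D$ the full sum $\beta_\ell := \alpha_{i_1}+\cdots+\alpha_{i_\ell}$ is a root. I will argue by induction on $\ell$, showing that every partial sum $\beta_m := \alpha_{i_1}+\cdots+\alpha_{i_m}$ is a root. The case $m=1$ is trivial, since simple roots are roots.

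For the inductive step, assume $\beta_m$ is a root with $m<\ell$, and consider $\beta_m + \alpha_{i_{m+1}}$. The key fact is the standard root-string criterion: if $\beta,\gamma$ are roots with $\beta \neq \pm\gamma$ and $\langle \beta,\gamma\rangle <0$, then $\beta+\gamma$ is a root (e.g.\ \cite[Chapter II]{knapp} or \cite{humphreys}). Here $\beta_m\neq \pm\alpha_{i_{m+1}}$, because the vertices of the path are distinct and $\beta_m$ is a nonnegative combination of $\alpha_{i_1},\ldots,\alpha_{i_m}$ only. So I need to compute
$$\langle \beta_m,\alpha_{i_{m+1}}\rangle=\sum_{t=1}^{m}\langle \alpha_{i_t},\alpha_{i_{m+1}}\rangle.$$
Distinct simple roots have non-positive inner product, and the term $t=m$ is strictly negative because $\alpha_{i_m}$ and $\alpha_{i_{m+1}}$ share an edge. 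Hence the sum is strictly negative, and $\beta_{m+1}$ is a root.

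The only subtle point, and the one I would check carefully, is that no positivity can cancel the negative contribution. This is automatic: all cross terms $\langle\alpha_i,\alpha_j\rangle$ with $i\neq j$ are $\le 0$, so acyclicity of Dynkin diagrams is not even needed. Only the distinctness of the vertices in the path is used, to guarantee $\beta_m\neq\pm\alpha_{i_{m+1}}$ and that no diagonal term $\langle\alpha_{i_{m+1}},\alpha_{i_{m+1}}\rangle>0$ appears. Applying the result to the subpath $\alpha_{i_j},\ldots,\alpha_{i_k}$ then gives the full statement. The connectedness hypothesis on $\D$ plays no real role beyond ensuring that the path lies inside a single irreducible component.
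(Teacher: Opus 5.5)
Your proof is correct and follows the same route as the paper: you add one simple root at a time along the path, and each step uses the criterion that $\langle\beta,\gamma\rangle<0$ (with $\beta\neq-\gamma$) forces $\beta+\gamma$ to be a root. The only difference is that the paper asserts the equality $\langle\alpha_{i_j}+\cdots+\alpha_{i_m},\alpha_{i_{m+1}}\rangle=\langle\alpha_{i_m},\alpha_{i_{m+1}}\rangle$, which tacitly uses that Dynkin diagrams have no cycles, whereas you need only the inequality from non-positivity of $\langle\alpha_i,\alpha_j\rangle$ for distinct simple roots, which is slightly cleaner.
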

\begin{proof}
    Fix $1 \leq j \leq k \leq \ell$, and 
    let $\langle \cdot, \cdot \rangle$ denote the inner product of $\Phi$. 
    Then $\langle \alpha_{i_j}, \alpha_{i_{j+1}} \rangle < 0$, so $\alpha_{i_{j}} + \alpha_{i_{j+1}}$ is a root by \cite[Proposition 2.48 (e)]{knapp}. Next, we find  $\langle \alpha_{i_{j}} + \alpha_{i_{j+1}}, \alpha_{i_{j+2}}\rangle  = \langle \alpha_{i_{j+1}}, \alpha_{i_{j+2}}\rangle   < 0$, so $\alpha_{i_{j}} + \alpha_{i_{j+1}} + \alpha_{i_{j+2}}$ is a root. The result follows by continuing in this manner.
\end{proof}

\begin{proposition}
    \label{table 1 characterisation prop}
    A connected Vogan diagram $\V$ is listed in Table \ref{vogan table} if and only if $\V$ is inner and satisfies the following property:
    \begin{equation}
        \label{table 1 property}
        \text{$\alpha + \beta$ is not a root for any non-compact positive roots $\alpha,\beta$.}
    \end{equation}
\end{proposition}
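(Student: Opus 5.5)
We prove both directions by reducing to the number of painted vertices and then invoking Lemma \ref{single painted vertex lemma}. Every diagram in Table \ref{vogan table} has trivial involution and underlying Dynkin diagram connected, so by Proposition \ref{characterisation of type prop} it is inner. Thus in both directions we may assume $\V$ is connected and inner, and therefore has trivial involution and all roots imaginary. We then split according to the number $|\P|$ of painted vertices.

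\emph{Case $|\P| = 0$.} All simple roots are compact, so by Corollary \ref{compactness addition rules} every positive root is compact. Property (\ref{table 1 property}) therefore holds vacuously. These diagrams are exactly the ``Compact'' row of Table \ref{vogan table}.

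\emph{Case $|\P| = 1$.} Let $\alpha_k$ be the painted vertex. By Corollary \ref{compactness addition rules}, a positive root $\alpha$ is non-compact if and only if $c_k(\alpha)$ is odd.
\begin{itemize}
\item If $\V$ is in Table \ref{vogan table}, then by Lemma \ref{single painted vertex lemma}(iii) every non-compact positive root has $c_k = 1$. The sum of two such roots has $c_k = 2$, which exceeds the bound in (iii), so it is not a root. Hence (\ref{table 1 property}) holds.
\item Conversely, if (\ref{table 1 property}) holds, then since $\alpha_k$ is itself a non-compact positive root, condition (iv) of Lemma \ref{single painted vertex lemma} holds. So $\V$ is in Table \ref{vogan table}.
\end{itemize}
It remains to check that the single-painted rows of Table \ref{vogan table} are exactly those covered by Lemma \ref{single painted vertex lemma}(i). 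Since every diagram in the table has at most one painted vertex, this is immediate.

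\emph{Case $|\P| \geq 2$.} No such diagram appears in Table \ref{vogan table}, so we must show that (\ref{table 1 property}) fails. Choose two painted vertices $\alpha_a, \alpha_b$ and a path between them in $\D$ that is \emph{minimal} in the sense that no interior vertex is painted. Such a path exists because $\D$ is connected: take a shortest path between any two painted vertices.
\begin{itemize}
\item Write the path as $\alpha_a = \alpha_{i_1}, \ldots, \alpha_{i_\ell} = \alpha_b$.
\item By Lemma \ref{root chain lemma}, the sum $\alpha := \alpha_{i_1} + \cdots + \alpha_{i_{\ell-1}}$ is a root.
\item This sum contains exactly one painted simple root, namely $\alpha_a$. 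By Corollary \ref{compactness addition rules}, $\alpha$ is a non-compact positive root.
\item Take $\beta := \alpha_b$, which is non-compact.
\item Again by Lemma \ref{root chain lemma}, $\alpha + \beta = \alpha_{i_1} + \cdots + \alpha_{i_\ell}$ is a root.
\end{itemize}
This violates (\ref{table 1 property}).

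One point needs care: compactness is well defined on iterated sums. Corollary \ref{compactness addition rules} applies at each step because each partial sum along the path is a root by Lemma \ref{root chain lemma}, so the parity rule propagates along the chain.

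The main obstacle is the $|\P| = 1$ case, but this is essentially already done by Lemma \ref{single painted vertex lemma}. The only genuine input is the parity description of compactness, which we extend to arbitrary positive roots. For this we write each positive root as a sum of simple roots whose partial sums are roots, using \cite[Chapter II Problem 7]{knapp}, and then apply Corollary \ref{compactness addition rules} inductively along the sum.
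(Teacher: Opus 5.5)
Your proof is correct and follows essentially the same route as the paper: the zero- and one-painted-vertex cases go through Lemma \ref{single painted vertex lemma} (parts (iii) and (iv)), and the case of at least two painted vertices uses a path between painted vertices with compact interior, together with Lemma \ref{root chain lemma} and Corollary \ref{compactness addition rules}. One phrase needs tightening: a shortest path between two arbitrary painted vertices can pass through a third painted vertex, so you should take a pair of painted vertices at minimal distance (or cut the path at its first interior painted vertex).
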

\begin{proof}
    Suppose $\V$ is listed in  Table \ref{vogan table}. Clearly, $\V$ is inner and has either zero or one painted vertices. If there are no painted vertices (i.e. $\V$ is compact), then there are no non-compact roots, so (\ref{table 1 property}) holds vacuously. Suppose $\V$ has one painted vertex, $\alpha_k$. If $\alpha, \beta \in \Phi^+$ are non-compact, then $c_k(\alpha) = c_k(\beta) = 1$, by Lemma \ref{single painted vertex lemma} (iii). Therefore, $c_k( \alpha + \beta) = 2$, so $\alpha + \beta$ cannot be a root by Lemma \ref{single painted vertex lemma} (iii). 

    Conversely, suppose $\V$ is not listed in Table \ref{vogan table}. The diagrams in  Table \ref{vogan table} are all inner, so it remains to consider when $\V$ is inner. Suppose $\V$ has exactly one painted vertex, $\alpha_k$. Then Lemma \ref{single painted vertex lemma} (iv) implies that there exists non-compact $\alpha \in \Phi^+$ such that $\alpha_k + \alpha$ is a root, so (\ref{table 1 property}) does not hold. Finally, suppose $\V$ has at least two painted vertices. Choose a path $\alpha_{i_1}, \alpha_{i_2},\ldots, \alpha_{i_\ell}$ in the Dynkin diagram such that $\alpha_{i_1}$ and $\alpha_{i_\ell}$ are non-compact, while $\alpha_{i_2},\ldots, \alpha_{i_{\ell -1}}$ are compact.
    By Corollary \ref{compactness addition rules} and Lemma \ref{root chain lemma},  $\alpha = \alpha_{i_1} + \cdots + \alpha_{i_{\ell - 1}}$ and $\beta = \alpha_{i_\ell}$ are non-compact positive roots, and $\alpha+ \beta$ is a root.
\end{proof}

\subsection{$\sigma$-adapted root vectors} 
\label{root vectors subsection}Fix a maximally compact Cartan subalgebra $\h$ of $\g$, and let $\Phi$ be the roots of $(\g^\C, \h^\C)$. Let $B$ be the Killing form of $\g$. For each root $\alpha$, let $H_\alpha \in \h^\C$ denote the $(B|_{\h^\C \times \h^\C
    })$-dual of $\alpha$.
\begin{lemma}
    \label{sigma adapted root vectors lemma}
    There exist root vectors $E_\alpha \in \g^\C_\alpha$ such that 
    $$B(E_\alpha, E_{-\alpha}) = 1, \qquad [E_\alpha, E_{-\alpha}] = H_\alpha, \qquad \sigma E_\alpha = \begin{cases}
        E_{\sigma \alpha} & \text{if $\alpha$ is complex} \\
        -s_\alpha E_{-\alpha} & \text{if $\alpha$ is imaginary},
    \end{cases}$$
    where $s_\alpha = 1$ if $\alpha$ is compact and $s_\alpha = -1$ if $\alpha$ is non-compact.
\end{lemma}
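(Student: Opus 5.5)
We construct the root vectors one $\sigma$-orbit at a time, working with pairs $\{\alpha,-\alpha\}$. Since $\h$ is maximally compact, there are no real roots, so every root is either imaginary or complex. Two basic facts will be used throughout. First, $\sigma$ is a conjugate-linear Lie algebra automorphism of $\g^\C$. Second, $B(\sigma X,\sigma Y)=\overline{B(X,Y)}$, because $B$ is real on $\g$. It follows that $\sigma H_\alpha = H_{\sigma\alpha}$. Indeed, for $H\in\h^\C$ we have
$$B(\sigma H_\alpha, H)=\overline{B(H_\alpha,\sigma H)}=\overline{\alpha(\sigma H)}=(\sigma\alpha)(H).$$

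We first record the normalisation that the first two conditions demand. Take any nonzero $X\in\g^\C_\alpha$ and $Y\in\g^\C_{-\alpha}$. Then $B(X,Y)\neq 0$, and the standard identity $[X,Y]=B(X,Y)H_\alpha$ holds. Hence the two conditions $B(E_\alpha,E_{-\alpha})=1$ and $[E_\alpha,E_{-\alpha}]=H_\alpha$ are equivalent. Each of them reduces to one scalar condition on the pair $(E_\alpha,E_{-\alpha})$.

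\emph{Imaginary roots.} Let $\alpha$ be imaginary. Then $\sigma\g^\C_\alpha=\g^\C_{-\alpha}$ and $B(X,\sigma X)\neq 0$ for nonzero $X\in\g^\C_\alpha$. The value $B(X,\sigma X)$ is real, since $\overline{B(X,\sigma X)}=B(\sigma X,X)$. By definition it has sign $-s_\alpha$. Rescale $X$ by a positive real so that $B(X,\sigma X)=-s_\alpha$. Set $E_\alpha:=X$ and $E_{-\alpha}:=-s_\alpha\sigma X$. Then $\sigma E_\alpha=-s_\alpha E_{-\alpha}$, and
$$B(E_\alpha,E_{-\alpha})=-s_\alpha B(X,\sigma X)=s_\alpha^2=1.$$
It remains to check the condition $\sigma E_{-\alpha}=-s_{-\alpha}E_{\alpha}$. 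We have $s_{-\alpha}=s_\alpha$, because $B(\sigma X,\sigma\sigma X)=\overline{B(X,\sigma X)}$ has the same sign. Then
$$\sigma E_{-\alpha}=-s_\alpha\sigma^2X=-s_\alpha E_\alpha,$$
as required. This choice is made for one representative $\alpha$ of each pair $\pm\alpha$.

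\emph{Complex roots.} Let $\alpha$ be complex. Since $\sigma\alpha\neq\pm\alpha$, the $\sigma$-orbit $\{\alpha,-\alpha,\sigma\alpha,-\sigma\alpha\}$ has four elements. Choose $E_\alpha$ and $E_{-\alpha}$ with $B(E_\alpha,E_{-\alpha})=1$. Then define $E_{\sigma\alpha}:=\sigma E_\alpha$ and $E_{-\sigma\alpha}:=\sigma E_{-\alpha}$. This is consistent because $\sigma^2=\Id$. We also have
$$B(E_{\sigma\alpha},E_{-\sigma\alpha})=\overline{B(E_\alpha,E_{-\alpha})}=1,$$
so the normalisation holds on the conjugate pair as well.

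The main obstacle is the imaginary case. The point there is to show that the rescaling can be done with a real positive factor: $B(X,\sigma X)$ is real and nonzero, and its sign is precisely $-s_\alpha$. This is also where compactness enters the formula.
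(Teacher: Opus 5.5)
Your proposal is correct and follows essentially the same argument as the paper: for an imaginary pair you rescale $E_\alpha$ by a positive factor so that $B(E_\alpha,\sigma E_\alpha)=-s_\alpha$ and set $E_{-\alpha}:=-s_\alpha\sigma E_\alpha$, and for a complex quadruple you normalise one pair $(E_\alpha,E_{-\alpha})$ and transport it by $\sigma$. You also verify details the paper leaves implicit, namely that $B(X,\sigma X)$ is real, that $s_{-\alpha}=s_\alpha$, and that the conditions $B(E_\alpha,E_{-\alpha})=1$ and $[E_\alpha,E_{-\alpha}]=H_\alpha$ are equivalent; this is fine.
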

\begin{proof}
    Fix a pair of compact roots $ \pm \alpha \in \Phi$. Choose a root vector $E_\alpha \in \g^\C_\alpha$. Since $B(E_\alpha, \sigma E_{\alpha}) < 0$, we may assume that $B(E_\alpha, \sigma E_{\alpha}) = -1$ after rescaling $E_\alpha$. Setting $E_{-\alpha} := - \sigma E_\alpha$ gives us $B(E_\alpha, E_{-\alpha}) = 1$ and $[E_\alpha, E_{-\alpha}] =H_\alpha$. The case when $\pm\alpha$ are non-compact is analogous.

    Next, fix a quadruple of complex roots $\pm \alpha, \pm \sigma \alpha \in \Phi$. Choose any root vector $E_\alpha \in \g^\C_\alpha$. Choose $E_{-\alpha} \in \g^\C_{-\alpha}$ so that $B(E_\alpha, E_{-\alpha}) = 1$, and set $E_{\sigma \alpha} := \sigma E_{\alpha}$, $E_{-\sigma \alpha} := \sigma E_{-\alpha}$. We are done if we show that $B(E_{\sigma \alpha}, E_{-\sigma \alpha }) = 1$. Indeed, we find 
    $B(E_{\sigma \alpha}, E_{-\sigma \alpha })  = B(\sigma E_{ \alpha}, \sigma E_{- \alpha }) = \overline{B(E_\alpha, E_{-\alpha})} = 1,$
    where the second equality holds because $B$ is real.
\end{proof}

\subsection{Left-invariant right $T$-invariant metrics} 
\label{right T invariant subsection}
Let $G$ be a real semisimple Lie group with Lie algebra $\g$. Let $\h$ be a maximally compact Cartan subalgebra of $\g$,  and write $\h =  \t \oplus \a$ as in (\ref{t and a definitions}). Let $T$ be the connected subgroup of $G$ with Lie algebra $\t$.

\begin{lemma}  
    \label{AdGT is compact}
    The Lie group $\Ad_G(T)$ is a maximal torus of a  maximal compact subgroup of $\Ad(G)$. In particular, $\Ad_G(T)$ is compact.
\end{lemma}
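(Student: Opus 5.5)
We work in the adjoint group $\Ad(G) \subseteq \mathrm{Aut}(\g)$, which is a linear Lie group with Lie algebra $\ad(\g) \cong \g$, since $\g$ is semisimple. Fix a Cartan involution $\theta$ with $\theta \h = \h$, with Cartan decomposition $\g = \k \oplus \p$. Let $K := \{ \varphi \in \Ad(G) : \varphi \circ \theta = \theta \circ \varphi\}$. By the standard structure theory of semisimple Lie groups with finite centre (here the centre is trivial, since $\Ad(G)$ is the adjoint group), \cite[Theorem 6.31]{knapp} shows that $K$ is a compact, connected, maximal compact subgroup of $\Ad(G)$ with Lie algebra $\ad(\k)$. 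The plan is to identify $\Ad_G(T)$ with the connected subgroup of $K$ whose Lie algebra is $\ad(\t)$, and then to show that this subgroup is a maximal torus of $K$.

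The identification comes first. Since $T$ is the connected subgroup of $G$ with Lie algebra $\t$, its image $\Ad_G(T)$ is the connected subgroup of $\Ad(G)$ generated by $\exp(\ad(\t))$, so its Lie algebra is $\ad(\t)$. We have $\t = \h \cap \k \subseteq \k$, and $\t$ is abelian. Hence $\Ad_G(T)$ is a connected abelian subgroup of the connected compact group $K$.

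Next comes maximality. By Proposition \ref{prop maximally compact equivalences} (iii), $\t$ is a maximal abelian subspace of $\k$, so $\ad(\t)$ is a maximal abelian subalgebra of the compact Lie algebra $\ad(\k)$. For a compact connected Lie group $K$, the connected subgroup $S$ attached to a maximal abelian subalgebra $\s'$ of $\mathrm{Lie}(K)$ is a maximal torus. The argument is as follows. The closure $\overline S$ is a connected abelian compact subgroup, so its Lie algebra is abelian and contains $\s'$; by maximality it equals $\s'$. Therefore $S = \overline S$ is closed, and being compact, connected and abelian, it is a torus. Any torus containing $S$ has an abelian Lie algebra containing $\s'$, so it equals $S$.

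Consequently, $\Ad_G(T)$ is a maximal torus of the maximal compact subgroup $K$, and in particular it is compact. The only real obstacle is the closedness of $\Ad_G(T)$. A priori the image of a connected abelian subgroup could be a dense, non-closed winding inside a torus. The maximality of $\t$ in $\k$, supplied by maximal compactness of $\h$, is exactly what rules this out. The appeal to the existence of a compact $K$ with Lie algebra $\ad(\k)$ relies on passing to $\Ad(G)$, and this is why the statement is phrased for $\Ad_G(T)$ rather than for $T$ itself.
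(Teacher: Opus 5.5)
Your proposal is correct and follows essentially the same route as the paper. Both arguments take the maximal compact subgroup of $\Ad(G)$ with Lie algebra $\k$, use Proposition~\ref{prop maximally compact equivalences}~(iii) to see that $\t$ is maximal abelian in $\k$, and conclude that the connected subgroup with Lie algebra $\t$ is a maximal torus. The only difference is that the paper cites \cite[Proposition 4.30]{knapp} for that last step, whereas you prove it directly with the closure argument.
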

\begin{proof}
    Recall that there exists a Cartan involution $\theta: \g \rightarrow \g$ such that $\theta  \h = \h$, and we have $\t = \h \cap \k$, where $\k$ is the fixed-point subspace of $\theta$. Since $\h$ is maximally compact, this implies that $\t$ is a maximal abelian subspace of $\k$ by Proposition \ref{prop maximally compact equivalences}.   Let $\widehat K$ denote the Lie subgroup of $\Ad(G)$ whose Lie algebra is $\k$. The subgroup $\widehat K$ is a maximal compact subgroup of $\Ad(G)$ \cite[Chapter VI.3]{knapp}. Since $\Ad_G(T)$ is the connected subgroup of $\widehat K$ with Lie algebra $\t$, it follows that  $\Ad_G(T) \subseteq \widehat K$ is a maximal torus \cite[Proposition 4.30]{knapp}.
\end{proof}

\begin{proposition}
    \label{right T invariant metrics form}
    A left-invariant Riemannian  metric $g$ on $G$ is right $T$-invariant if and only if 
    \begin{enumerate}[\normalfont (i)]
        \item $g(\h^\C, \g^\C_\alpha) = 0$ for each $\alpha \in \Phi$, and 
        \item  $g(\g^\C_\alpha, \g^\C_\beta) \neq 0$ implies  $\beta \in \{-\alpha, \sigma \alpha\}$ for any $\alpha, \beta \in \Phi$.
    \end{enumerate}
\end{proposition}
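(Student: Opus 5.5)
Throughout, extend $g$ complex-bilinearly to $\g^\C$. By the equivalences listed in \S\ref{symmeterisation 2 subsection}, right $T$-invariance of $g$ is the same as $\ad(\t)$-invariance of the extension:
$$g([X,Y],Z) + g(Y,[X,Z]) = 0 \quad \text{for all } X \in \t,\ Y,Z \in \g^\C.$$
By $\C$-linearity this extends to all $X \in \t^\C$. The plan is to evaluate this identity on $\h$-weight vectors. The weight decomposition $\g^\C = \h^\C \oplus \bigoplus_\alpha \g^\C_\alpha$ is preserved by $\ad(\t^\C)$. A vector $Y \in \g^\C_\alpha$ has $\t$-weight $\alpha|_\t$, while $\h^\C$ has $\t$-weight $0$.

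\emph{Forward direction.} Take $Y \in \g^\C_\alpha$, $Z \in \g^\C_\beta$ and $X \in \t$. Invariance gives
$$(\alpha(X) + \beta(X))\, g(Y,Z) = 0.$$
So if $g(Y,Z) \neq 0$, then $(\alpha+\beta)|_\t = 0$. By Proposition \ref{Cartan subalgebras facts}(ii), $\alpha|_\t = \i\Im(\alpha) = \tfrac12(\alpha - \sigma\alpha)$. The condition therefore reads $\alpha + \beta = \sigma(\alpha + \beta)$, i.e. $\gamma := \alpha + \beta$ is fixed by $\sigma$ and lies in $\a^*$.

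Similarly, $g(H,Z) \neq 0$ with $H \in \h^\C$ forces $\beta|_\t = 0$, i.e. $\sigma\beta = \beta$. A real root cannot exist because $\h$ is maximally compact, which gives (i).

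For (ii), the key step is showing that $\alpha + \beta \in \a^*$ forces $\beta \in \{-\alpha, \sigma\alpha\}$. Write $\beta = -\alpha + \gamma$ with $\gamma = \sigma\gamma$. Then $\alpha|_\t = -\beta|_\t$, so $\Im\alpha = -\Im\beta$. This root-system step is what I expect to be the main obstacle: the restriction map $\Phi \to \i\t^*$ need not be injective up to the two obvious options, so a priori there could be other roots with the same imaginary part. I would try to exclude this as follows. Let $\theta$ be a Cartan involution with $\theta\h = \h$, so that $\theta\alpha = -\sigma\alpha$ by Proposition \ref{Cartan subalgebras facts}(iii). Roots with a common restriction to $\t$ are exactly the $\t$-weights of the compact torus $\Ad_G(T)$ from Lemma \ref{AdGT is compact}, i.e. restricted roots of $\k$-type data. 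Since $\t$ is maximal abelian in $\k$, the centraliser of $\t$ in $\g$ is $\h$ (maximal compactness). I would then argue that $\mu := \alpha|_\t$ determines the pair $\{\alpha, -\theta\alpha\} = \{\alpha, \sigma\alpha\}$. If $\alpha'$ is a root with $\alpha'|_\t = \alpha|_\t$, consider $\alpha' - \alpha \in \a^*$. If $\alpha'-\alpha$ were a nonzero root, it would be real, which is impossible. Otherwise I would use $\langle\alpha,\alpha\rangle = \langle\alpha',\alpha'\rangle$-type string arguments to conclude $\alpha' \in \{\alpha, \sigma\alpha\}$. If this step fails in general, the fallback is a case check of $\sigma$-orbit structure via the Vogan diagram classification: $\sigma$ acts as $-(\text{diagram involution})$ on $\Phi$, and the lift of $\a^*$ is spanned by differences $\alpha - \theta\alpha$.

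\emph{Converse.} Assume (i) and (ii). Decompose $Y,Z$ into weight components. For $X \in \t$, the expression
$$g([X,Y],Z) + g(Y,[X,Z])$$
reduces to a sum of terms $(\alpha(X) + \beta(X))\, g(Y_\alpha, Z_\beta)$, where $\alpha$ or $\beta$ may also be $0$ for the $\h^\C$-components. Pairings between $\h^\C$ and root spaces vanish by (i). The $\h^\C \times \h^\C$ terms carry the factor $0+0$. By (ii), the remaining nonzero terms have $\beta = -\alpha$, where the factor $(\alpha+\beta)(X)$ is $0$, or $\beta = \sigma\alpha$. In the latter case $(\alpha + \sigma\alpha)(X) = 2\Re\alpha(X) = 2\alpha|_\a(X)$, which vanishes since $X \in \t$. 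Hence $g$ is $\ad(\t)$-invariant, equivalently right $T$-invariant, since $T$ is connected.
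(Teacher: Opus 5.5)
\paragraph{Verdict.} Your proof of (i), your reduction of (ii) to the condition $(\alpha+\beta)|_{\t}=0$, and your converse coincide with the paper's. However, the step you flag as the main obstacle is left unproved, and it is the entire content of (ii). You must show that $(\alpha+\beta)|_{\t}=0$ forces $\beta\in\{-\alpha,\sigma\alpha\}$, and your sketch does not do this:
\begin{itemize}
\item the ``string arguments'' are never specified;
\item a length comparison only constrains the norm of the $\a^*$-component, so it cannot by itself give $\alpha'|_{\a}=\pm\alpha|_{\a}$ once $\dim\a\geq 2$;
\item the Vogan-diagram case check is not carried out.
\end{itemize}
The fibre statement you aim for is also misstated. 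Since $(\sigma\alpha)|_{\t}=-\alpha|_{\t}\neq\alpha|_{\t}$, the roots with the same restriction to $\t$ as $\alpha$ are $\alpha$ and $\theta\alpha=-\sigma\alpha$, not $\alpha$ and $\sigma\alpha$. As written, your claimed lemma is false whenever $\alpha$ is complex. The correct version, applied to $-\alpha$, is what yields $\beta\in\{-\alpha,\theta(-\alpha)\}=\{-\alpha,\sigma\alpha\}$. The paper does not prove this fact either: it cites Vogan for the statement that, for a maximally compact Cartan subalgebra, the imaginary part of a root determines its real part up to sign.

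\paragraph{How to close the gap.} Your observation that a nonzero difference of two roots with equal $\t$-restriction cannot be a root (it would be real) is half of a short self-contained proof. The missing ingredient is a positivity argument.
\begin{itemize}
\item Suppose $\alpha|_{\t}=\alpha'|_{\t}=:\mu$. Then $\alpha'+\theta\alpha'=2\mu$.
\item $\theta$ is an isometry of $\R\Phi$ whose $(\pm1)$-eigenspaces $\i\t^*$ and $\a^*$ are orthogonal. Hence
$\langle\alpha,\alpha'\rangle+\langle\alpha,\theta\alpha'\rangle=2\langle\alpha,\mu\rangle=2\langle\mu,\mu\rangle>0$.
Here $\mu\neq 0$ because there are no real roots.
\item So one of $\langle\alpha,\alpha'\rangle$, $\langle\alpha,\theta\alpha'\rangle$ is positive. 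By the standard fact that $\langle\gamma,\delta\rangle>0$ forces $\gamma-\delta\in\Phi\cup\{0\}$, one of $\alpha-\alpha'$, $\alpha-\theta\alpha'$ is a root or zero.
\item Both differences vanish on $\t$, so a nonzero one would be a real root, which is impossible. Hence $\alpha'\in\{\alpha,\theta\alpha\}$.
\end{itemize}
With this lemma inserted, your proof is complete.
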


\begin{proof}
    Suppose $g$ is right $T$-invariant. This is equivalent to $g$ being $\ad_\g(\t)$-invariant, i.e. $g(\ad(H) \cdot,\cdot) + g(\cdot, \ad(H) \cdot) = 0$ for all $H \in \t$ (see \S \ref{symmeterisation 2 subsection}). To show (i), fix $H' \in \h^\C$ and $X \in \g^\C_\alpha$ for some root $\alpha \in \Phi$. Since $\h$ is maximally compact, there exists $H \in \t$ such that $\alpha(H) \neq 0$ (Proposition \ref{Cartan subalgebras facts}). We find
    $$g(H',X) = \frac{1}{ \alpha(H)}g(H',[H,X]) = \frac{1}{ \alpha(H)} g([H',H],X) = 0. $$
    To show (ii), fix $X \in \g^\C_{\alpha}$ and $Y \in \g^\C_\beta$ for roots $\alpha, \beta \in \Phi$, and suppose that $g(X,Y) \neq 0$. By \cite[page 5]{vogan_1979}, the imaginary part of a root determines the real part of a root, up to a sign. Thus, $\beta \in \{-\alpha, \sigma \alpha\}$ if and only if  $\i \Im(\alpha + \beta) = \alpha|_\t + \beta|_\t = 0$. Fix $H \in \t$, and let us show that $\alpha(H) + \beta(H) = 0$. If $\alpha(H) = \beta(H) = 0$, then we are done. Without loss of generality, assume that $\alpha(H) \neq 0$. Then 
    $$g(X,Y) = \frac{1}{\alpha(H)} g([H,X],Y) =-\frac{1}{\alpha(H)} g(X,[H,Y])  = - \frac{\beta(H)}{\alpha(H)} g(X,Y),$$
    so rearranging gives $\alpha(H) + \beta(H) = 0$.

    Conversely, suppose (i) and (ii) hold. Fix $H \in \t$, $H' \in \h^\C$, $X \in \g^\C_\alpha$, and $Y \in \g^\C_\beta$ for roots $\alpha , \beta \in \Phi$. Then 
    \begin{align*}
        g([H,H'],X) + g(H', [H,X]) = 0 + \alpha(H) g(H',X) &= 0, \\
        g([H,X],Y) + g(X, [H,Y]) =(\alpha + \beta)(H) g(X,Y) &= 0,
    \end{align*}
    as desired.
\end{proof}

\section{Regular complex structures on real semisimple Lie groups}
\label{regular complex structures section}
\subsection{Classification of regular complex structures}
\label{regular classification subsection}
 Throughout this section, let $G$ be an even-dimensional  real semisimple Lie group with Lie algebra $\g$, and let $\h$ denote a Cartan subalgebra of $\g$. Let $\Phi$ denote the roots of $(\g^\C, \h^\C)$, and let $\g^\C_\alpha$ denote the root space of $\alpha \in \Phi$. Let $H_\alpha \in \h^\C$ denote the dual of $\alpha \in \Phi$ with respect to the Killing form. 

We say that a left-invariant complex structure $J: \g \rightarrow \g$ on $G$ is \textit{$\h$-regular} if 
$$\ad(H) \circ J = J \circ \ad(H) \qquad \text{ for all $H \in \h$}.$$

\begin{remark}
    \label{equivalent definitions of regular}
    For a left-invariant complex structure $J$ on $G$, the following are equivalent:
    \begin{enumerate}[\normalfont (i)]
        \item $J$ is $\h$-regular.
        \item $[\h^\C, \q ] \subseteq \q$, where $\q$ is the $\i$-eigenspace of $J:\g^\C \rightarrow \g^\C$.
        \item $J$ is right $G_\h$-invariant, where $G_\h$ is the connected subgroup of $G$ with Lie algebra $\h$.
    \end{enumerate}
\end{remark}

We say that a left-invariant complex structure $J$ on $G$ is \textit{regular} if it is $\h$-regular with respect to some Cartan subalgebra $\h$ of $\g$.

\begin{remark}
    \label{closed subsets prop}
    A subset of roots $\RR \subseteq \Phi$ is called \textit{closed} if it has the following property: for any $\alpha, \beta \in \RR$, if $\alpha + \beta$ is a root, then $\alpha + \beta \in \RR$.  
    Suppose $\RR$ is closed, and set 
    $\RR_0 := \RR \cap (-\RR)$ and $ \RR_1 := \RR \backslash \RR_0.$
    Then 
    \begin{enumerate}[\normalfont (i)]
        \item $\RR_0$ is a closed root subsystem of $\Phi$.
        \item If $\alpha \in \RR_1$, $\beta \in \RR$, and $\alpha + \beta$ is a root, then $\alpha  + \beta \in \RR_1$.
    \end{enumerate}
\end{remark}

 A (complex) subalgebra $\f$ of $\g^\C$ is called \textit{$\h^\C$-regular} if 
$$[\h^\C, \f] \subseteq \f.$$
The following proposition characterises all $\h^\C$-regular subalgebras of $\g^\C$:

\begin{proposition}[{\cite[Chapter 6, Proposition 1.1]{vinberg}}]
    \label{regular subalgebras structure}
    Let $\RR \subseteq \Phi$ be a closed subset of roots. Let $\l \leq \h^\C$ be a subspace such that $H_\alpha \in \l$ for all $\alpha \in \RR \cap (-\RR)$. Then 
    $$\f(\RR, \l) := \l \oplus \bigoplus_{\alpha \in \RR}\g^\C_\alpha$$
    is a $\h^\C$-regular subalgebra of $\g^\C$.
    Conversely, if $\f$ is a $\h^\C$-regular subalgebra of $\g^\C$, then there exists a unique closed subset of roots $\RR \subseteq \Phi$ and a subspace $\l \subseteq \h^\C$ containing $H_\alpha$ for $\alpha \in \RR \cap (-\RR)$ such that $\f = \f(\RR, \l)$.
\end{proposition}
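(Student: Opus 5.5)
The plan has two parts, one for each direction of the statement.

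For the first direction we show that $\f(\RR,\l)$ is a subalgebra stable under $\ad(\h^\C)$. Stability is immediate, since $\l\subseteq\h^\C$ is killed by $\ad(\h^\C)$ and each root space $\g^\C_\alpha$ is $\ad(\h^\C)$-stable. For closure under the bracket we check the pairs of summands.
\begin{itemize}
\item $[\l,\l]=0$, and $[\l,\g^\C_\alpha]\subseteq\g^\C_\alpha$.
\item For $\alpha,\beta\in\RR$ with $\alpha+\beta\neq 0$, we have $[\g^\C_\alpha,\g^\C_\beta]\subseteq\g^\C_{\alpha+\beta}$. This is zero unless $\alpha+\beta$ is a root, and in that case $\alpha+\beta\in\RR$ by closedness.
\item For $\beta=-\alpha$, we have $\alpha\in\RR\cap(-\RR)$ and $[\g^\C_\alpha,\g^\C_{-\alpha}]=\C H_\alpha\subseteq\l$ by hypothesis. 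Here we use the standard fact that $[E_\alpha,E_{-\alpha}]=B(E_\alpha,E_{-\alpha})H_\alpha$.
\end{itemize}

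For the converse, let $\f$ be an $\h^\C$-regular subalgebra. Since $\ad(\h^\C)$ acts semisimply on $\g^\C$ and preserves $\f$, the subspace $\f$ decomposes into weight spaces:
$$\f=(\f\cap\h^\C)\oplus\bigoplus_{\alpha\in\Phi}(\f\cap\g^\C_\alpha).$$
We justify this by the usual Vandermonde or polynomial-projection argument. Each projection onto a weight space is a polynomial in $\ad(H)$ for a suitable regular $H\in\h^\C$, so it preserves $\f$.

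Root spaces are one-dimensional, so each $\f\cap\g^\C_\alpha$ is either $0$ or all of $\g^\C_\alpha$. We then set $\RR:=\{\alpha:\g^\C_\alpha\subseteq\f\}$ and $\l:=\f\cap\h^\C$, which gives $\f=\f(\RR,\l)$. Uniqueness is clear, because $\RR$ and $\l$ are recovered from $\f$ as the weights appearing in $\f$ and as $\f\cap\h^\C$ respectively.

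It remains to check the two conditions on $\RR$ and $\l$.
\begin{itemize}
\item $\RR$ is closed: if $\alpha,\beta\in\RR$ and $\alpha+\beta\in\Phi$, then $[\g^\C_\alpha,\g^\C_\beta]=\g^\C_{\alpha+\beta}$ by the standard nonvanishing fact. This space lies in $\f$, so $\alpha+\beta\in\RR$.
\item $\l$ contains the required coroots: if $\alpha\in\RR\cap(-\RR)$, then $H_\alpha$ is a nonzero multiple of $[E_\alpha,E_{-\alpha}]\in\f\cap\h^\C=\l$.
\end{itemize}

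The only step needing care is the weight-space decomposition of $\f$. We handle it by choosing $H\in\h^\C$ on which the values $0$ and $\alpha(H)$ for $\alpha\in\Phi$ are pairwise distinct. Then the Lagrange interpolation polynomials in $\ad(H)$ give the projections onto $\h^\C$ and onto each $\g^\C_\alpha$, and these projections preserve $\f$.
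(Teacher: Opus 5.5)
Your proof is correct and complete. The forward direction is a routine check on pairs of summands, using closedness of $\RR$ and $[\g^\C_\alpha,\g^\C_{-\alpha}]=\C H_\alpha$. The converse correctly reduces to two facts: one-dimensionality of root spaces, and $[\g^\C_\alpha,\g^\C_\beta]=\g^\C_{\alpha+\beta}$ whenever $\alpha+\beta$ is a root. The reduction goes through the Lagrange-interpolation projections in $\ad(H)$ for an $H\in\h^\C$ separating $0$ and all roots; such an $H$ exists because the finitely many nonzero functionals $\alpha$ and $\alpha-\beta$ have proper kernels.

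The dichotomy $\f\cap\g^\C_\alpha\in\{0,\g^\C_\alpha\}$ uses that $\f$ is a complex subspace, which the paper does assume.

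The paper gives no proof of its own and simply cites Vinberg, where this same standard argument is the expected one. So there is no different route to compare; you have supplied the argument the paper leaves to the reference.
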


\begin{remark}
    \label{construction remark}
    Observe that a subspace $\q \leq \g^\C$ is the $\i$-eigenspace of an $\h$-regular complex structure $J: \g \rightarrow \g$ on $G$ if and only if $\q$ is an $\h^\C$-regular subalgebra of $\g^\C$ satisfying $\q \oplus \sigma \q = \g^\C$.
    Now, let  $\q$ denote a $\h^\C$-regular subalgebra $$\q = \f(\RR, \l) = \l \oplus \bigoplus_{\alpha \in \RR} \g^\C_\alpha,$$ where $\RR$ is a closed subset of roots and $\l \leq \h^\C$ contains $H_\alpha$ for all $\alpha \in \RR \cap (- \RR)$. Then $\q \oplus \sigma \q = \g^\C$ if and only if  
    $$\RR \sqcup \sigma \RR = \Phi \qquad \text{and} \qquad \l \oplus \sigma \l = \h^\C.$$
\end{remark}
We say that two subdiagrams of a Dynkin diagram are \textit{disconnected} if they are disjoint and no vertex of one subdiagram shares an edge with a vertex in the other subdiagram.

\begin{algorithm}
    \label{regular complex structures construction algorithm}
    \begin{enumerate}[(i)]
    \item Choose a maximally compact Cartan subalgebra $\h$ of $\g$.
        \item Choose a set of positive roots $\Phi^+ \subseteq \Phi$ such that $\sigma \Phi^+ = - \Phi^+$. Let $\Delta$ denote the simple roots determined by $\Phi^+$. Recall that $-\sigma$ permutes the simple roots.
        \item Choose a (possibly empty) set of simple roots $\Delta_0\subseteq \Delta$ such that the subdiagrams of  $\Delta_0$ and $- \sigma \Delta_0$ are disconnected in the Dynkin diagram of $\Delta$.
        \item Choose a complex subspace $\l \leq \h^\C$ such that  $H_\alpha \in \l$ for each $\alpha \in \Delta_0$, and  $\l \oplus \sigma \l = \h^\C$.
    \end{enumerate}
\end{algorithm}
\begin{proposition}
    \label{regular structure prop}
Let $G$ be a real semisimple Lie group, and let $\h$ be a Cartan subalgebra of $\g$.
\begin{enumerate}[\normalfont (A)]
    \item Suppose $\h$ is maximally compact.  Let $(\Phi^+, \Delta_0, \l)$ be chosen as in Algorithm \ref{regular complex structures construction algorithm}. Set $\RR_0 := \C \Delta_0 \cap \Phi$, $\RR_0^+ := \RR_0 \cap \Phi^+$, and $\RR = (-\RR_0^+) \sqcup \Phi^+ \backslash( - \sigma \RR_0^+).$ Then 
    $$\q := \f(\RR,\l) =    \l \oplus \bigoplus_{\alpha \in \RR}\g^\C_\alpha$$
    is an $\h^\C$-regular subalgebra of $\g^\C$ satisfying $\q \oplus \sigma \q = \g^\C$, and the corresponding left-invariant complex structure $J = J (\Phi^+, \Delta_0, \l) $ is $\h$-regular.
    \item Conversely, suppose $J$ is an $\h$-regular complex structure on $G$. Then $\h$ is maximally compact, and $J = J(\Phi^+, \Delta_0, \l)$ for some choice of $(\Phi^+, \Delta_0, \l)$ described in Algorithm \ref{regular complex structures construction algorithm}.
\end{enumerate}
\end{proposition}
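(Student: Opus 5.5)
We prove (A) by checking the hypotheses of Proposition \ref{regular subalgebras structure} and Remark \ref{construction remark}. We prove (B) by reading off $(\Phi^+,\Delta_0,\l)$ from the decomposition $\RR = \RR_0\sqcup\RR_1$ of Remark \ref{closed subsets prop}. Throughout we use two standard root-system facts. First, the support of a root in the simple roots $\Delta$ is connected in the Dynkin diagram. Second, every root has all coefficients of one sign.

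\textbf{Part (A).} Since $\Delta_0$ and $-\sigma\Delta_0$ are disjoint, $\RR_0^+\subseteq \Phi^+\setminus(-\sigma\RR_0^+)$, so $\RR_0\subseteq \RR$ and $\RR\cap(-\RR)=\RR_0$.
\begin{enumerate}[(1)]
\item \emph{The coroots of $\RR\cap(-\RR)$ lie in $\l$.} $\RR_0$ is a root subsystem with base $\Delta_0$. Hence every $H_\alpha$ with $\alpha\in\RR_0$ lies in $\operatorname{span}\{H_\beta:\beta\in\Delta_0\}\subseteq\l$.
\item \emph{$\RR\sqcup\sigma\RR=\Phi$.} Using $\sigma\Phi^+=-\Phi^+$ we compute $\sigma\RR=(-\sigma\RR_0^+)\sqcup\big((-\Phi^+)\setminus(-\RR_0^+)\big)$. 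This is exactly the complement of $\RR$ in $\Phi$.
\item \emph{$\RR$ is closed.} Let $\alpha,\beta\in\RR$ with $\alpha+\beta\in\Phi$; there are three cases.
\begin{itemize}
\item If $\alpha,\beta\in-\RR_0^+$, closedness of $\RR_0$ gives $\alpha+\beta\in-\RR_0^+$.
\item If $\alpha,\beta\in\Phi^+\setminus(-\sigma\RR_0^+)$, then $\alpha+\beta$ is positive. It cannot lie in $-\sigma\RR_0^+$: its support would be contained in $-\sigma\Delta_0$, forcing $\alpha,\beta\in-\sigma\RR_0^+$.
\item If $\alpha\in\Phi^+\setminus(-\sigma\RR_0^+)$ and $\beta\in-\RR_0^+$, suppose first that $\alpha+\beta$ is negative. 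Then the support of $\alpha=(\alpha+\beta)+(-\beta)$ lies in $\Delta_0$, so $\alpha\in\RR_0^+$ and $\alpha+\beta\in-\RR_0^+$. Now suppose $\alpha+\beta=\gamma\in-\sigma\RR_0^+$. Then $\alpha=\gamma+(-\beta)$ has nonzero support meeting both $-\sigma\Delta_0$ and $\Delta_0$. These subdiagrams are disconnected, contradicting connectedness of supports.
\end{itemize}
\end{enumerate}
Proposition \ref{regular subalgebras structure} and Remark \ref{construction remark} then give (A).

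\textbf{Part (B).} By Remark \ref{construction remark} and Proposition \ref{regular subalgebras structure}, $\q=\f(\RR,\l)$ with $\RR$ closed, $\RR\sqcup\sigma\RR=\Phi$ and $\l\oplus\sigma\l=\h^\C$.
\begin{enumerate}[(1)]
\item \emph{$\h$ is maximally compact.} A real root $\alpha$ would satisfy $\alpha\in\RR\cap\sigma\RR$, which is empty.
\item \emph{$\sigma\RR_1=-\RR_1$ and the decomposition of $\Phi$.} Write $\RR_0=\RR\cap(-\RR)$ and $\RR_1=\RR\setminus\RR_0$. For $\alpha\in\RR_1$ we have $-\alpha\notin\RR$, so $-\alpha\in\sigma\RR$. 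Moreover $-\alpha\notin\sigma\RR_0$, since $\sigma\RR_0$ is symmetric and $\alpha\notin\sigma\RR$. Hence $-\RR_1\subseteq\sigma\RR_1$, and counting gives equality. Therefore
\[\Phi=\RR_0\sqcup\sigma\RR_0\sqcup\RR_1\sqcup(-\RR_1).\]
\item \emph{Candidate positive system.} Choose a positive system $\RR_0^+$ of $\RR_0$, let $\Delta_0$ be its base, and set $\Phi^+:=\RR_1\sqcup\RR_0^+\sqcup(-\sigma\RR_0^+)$. By construction $\Phi=\Phi^+\sqcup(-\Phi^+)$ and $\sigma\Phi^+=-\Phi^+$.
\item \emph{$\Phi^+$ is closed.} By the Bourbaki criterion, it suffices to show $\Phi^+$ is closed; it is then a positive system. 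Sums within $\RR_1$, or of $\RR_1$ with $\RR_0^+$, are handled by Remark \ref{closed subsets prop}(ii). The $\sigma$-conjugate statements handle sums of $\RR_1=-\sigma\RR_1$ with $-\sigma\RR_0^+$.
\item \emph{The remaining sums.} The genuinely new case is $\alpha\in\RR_0^+$, $\beta\in-\sigma\RR_0^+$. I would show that $\alpha+\beta$ is never a root, i.e.\ that $\RR_0$ and $\sigma\RR_0$ are mutually orthogonal with no root sums. Suppose $\alpha+\beta=\gamma$ is a root. Then $\gamma\in\RR_1$ or $\gamma\in-\RR_1$, because a root in $\RR_0$ or $\sigma\RR_0$ would force $\beta\in\RR$ or $\alpha\in\sigma\RR$ by closedness. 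In the first case, $\gamma\in\RR$ and $-\alpha\in\RR$ give $\beta=\gamma-\alpha\in\RR$, impossible. In the second case, apply $\sigma$ and the same argument inside $\sigma\RR$. If needed, also use $\alpha-\beta$ and the $\alpha$-string through $\beta$.
\item \emph{Simple roots and disconnectedness.} Show that $\Delta_0$ consists of simple roots of $\Phi^+$: an element of $\Delta_0$ cannot be a sum of two positive roots, by the decomposition in (2) and closedness. Show that $\Delta_0$ and $-\sigma\Delta_0$ are disconnected: orthogonality from (5) gives no edges between them.
\item \emph{Identification of $\RR$.} Then $\RR=(-\RR_0^+)\sqcup\Phi^+\setminus(-\sigma\RR_0^+)$. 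Moreover $H_\alpha\in\l$ for $\alpha\in\Delta_0$, since $\Delta_0\subseteq\RR\cap(-\RR)$. So $J=J(\Phi^+,\Delta_0,\l)$.
\end{enumerate}

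\textbf{Main obstacle.} The hardest step is (5): the case analysis showing that roots of $\RR_0$ and $\sigma\RR_0$ never sum to a root, which is where disconnectedness of $\Delta_0$ and $-\sigma\Delta_0$ comes from. I expect this, together with the verification that $\Phi^+$ is closed, to require the most care.
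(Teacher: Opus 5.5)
Your proposal is correct and is essentially the paper's proof. Part (B) matches it step by step: the same positive system $\RR_0^+\sqcup(-\sigma\RR_0^+)\sqcup\RR_1$, the same closedness argument showing that no root lies in $\RR_0+\sigma\RR_0$, and from that the same derivation of simplicity of $\Delta_0$ and disconnectedness. In Part (A) you check closedness of $\RR$ by a direct case analysis on supports and identify $\sigma\RR$ explicitly as the complement of $\RR$, where the paper characterises $\RR_1$ via $\Delta_1$ and uses a counting argument; this is only a cosmetic difference resting on the same connected-support fact. Two small points: the hedge about $\alpha-\beta$ and root strings in step (5) is unnecessary, since $\Phi=\RR\sqcup\sigma\RR$ with both pieces closed already finishes that step. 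You should also state the trivial items you skipped: sums within $\RR_0^+$ or within $-\sigma\RR_0^+$, and the equality $\C\Delta_0\cap\Phi=\RR\cap(-\RR)$, which holds once $\Delta_0\subseteq\Delta$.
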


We give a proof of Proposition \ref{regular structure prop} in \S \ref{regular structure proof}.

\begin{remark}
    \label{choices remark}
   We give a few remarks on the choices made in  Algorithm \ref{regular complex structures construction algorithm}. In particular, we explain why any even-dimensional real semisimple $G$ admits a regular complex structure.
   \begin{enumerate}[(i)]
    \item By Remark \ref{uniqueness of maximally compact remark}, a maximally compact $\h$ can always be chosen, and every regular complex structure on $G$ is equivalent to one which is $\h$-regular with respect to  a fixed maximally compact Cartan subalgebra $\h$.
    \item By Proposition \ref{prop maximally compact equivalences}, a $\Phi^+$ as in Algorithm \ref{regular complex structures construction algorithm} can always be chosen. By Corollary \ref{vogan diagrams corollary}, the choice of $\Phi^+$ in Algorithm \ref{regular complex structures construction algorithm} is equivalent to a choice of Vogan diagram for $\g$, up to equivalence of left-invariant complex structures on $G$. In particular, if $\g$ has a unique Vogan diagram, then the choice of $\Phi^+$ does not matter (e.g. if $\g$ is compact or complex, see Proposition \ref{characterisation of type prop}).
    \item The roots $\Delta_0$ chosen as in  Algorithm \ref{regular complex structures construction algorithm} must be complex, i.e. not fixed points of $-\sigma:\Phi \rightarrow \Phi$.
    \item Suppose $\h$, $\Phi^+$ and $\Delta_0$ are chosen as in  Algorithm \ref{regular complex structures construction algorithm}. 
    Assume that $\g$ is even-dimensional. Then $\dim_\C \g^\C = \dim_\C \h^\C + 2|\Phi^+| $ is even, so $\dim_\C \h^\C = 2m$ for some integer $m$. By identifying $\h^\C \cong \C^{2m}$ via an isomorphism $\h \cong \R^{2m}$, 
    Lemma \ref{grassmannian lemma} implies that we can always choose $\l$ as in Algorithm \ref{regular complex structures construction algorithm}, and the set of all such $\l$ forms a smooth manifold $\M$ of real dimension 
    $$\dim \mathcal M = \dim (\h )\left( \frac12 \dim( \h) - |\Delta_0|\right).$$
\end{enumerate}
\end{remark}

\begin{example}
    If $\g$ is inner, then any set of positive roots $\Phi^+$ satisfies $\sigma \Phi^+ = - \Phi^+$ (Example \ref{positive roots inner}), and the only choice of $\Delta_0$ is $\Delta_0 = \emptyset$, because all roots are imaginary (Proposition \ref{characterisation of type prop}). Thus, the $\h$-regular complex structures on $G$ are precisely 
    $$\q =\l \oplus \bigoplus_{\alpha \in \Phi^+} \g^\C_\alpha,$$
    where $\Phi^+$ is any set of positive roots, and $\l \leq \h^\C$ is any subspace with $\l \oplus \sigma \l  = \h^\C$.
\end{example}

\begin{example}
    Suppose $\g$ is complex, i.e. $\g$ is the underlying real Lie algebra of a complex semisimple Lie algebra $\s$. From Example \ref{bi invariant complex structure 1}, we have a bi-invariant complex structure $J: \g \rightarrow \g$ given by $J X := \i X$, where $\i X$ is scalar multiplication in $\s$.
    By Proposition \ref{characterisation of type prop},  $\g$ has a unique Vogan diagram: the underlying Dynkin diagram is $\D_1 \sqcup \D_2$, where $\D_i$ is the Dynkin diagram of $\s$. The involution $-\sigma$ swaps the copies $\D_1$ and $\D_2$ (see Proposition \ref{characterisation of type prop}).  
    Choose $\Delta_0$ to be the simple roots corresponding to the first copy $\D_1$. Then $-\sigma\Delta_0$ corresponds to $\D_2$, so $\Delta_0$ and $-\sigma\Delta_0$ are disconnected. In this case, the only choice of $\l$ is $\l = \C\{H_\alpha : \alpha \in \D_1\}$, and the corresponding $\h$-regular complex structure is the bi-invariant complex structure $J$ on $G$.
\end{example}

\begin{remark}
    The author believes that there are a couple of minor errors in \cite{snow_1986}:
    \begin{enumerate}[(i)]
        \item For an $\h$-regular complex structure $\q = \f(\RR,\l) = \l \oplus \bigoplus_{\alpha \in \RR}\g^\C_\alpha$, \cite[Theorem in \S2.1]{snow_1986} claims that we can always write $\l = \h_0 \oplus \h_1$, where $\h_0 = \C\{H_\alpha: \alpha \in \RR \cap (-\RR)\}$, and $\h_1$ is a subspace of the $B$-orthogonal complement of $\h_0 \oplus \sigma \h_0$.  Example \ref{counterexample for snow} gives a counterexample. 
        \item \cite[\S2.2, page 202]{snow_1986} claims that for any choice of positive roots $\Phi_1^+$ and $\Phi_2^+$ such that $\sigma \Phi^+_i = - \Phi^+_i$, there is an automorphism of $\g$ preserving $\h$ which sends $\Phi_1^+$ to $\Phi_2^+$. This is false: if such an automorphism exists, then the Vogan diagrams of $\Phi_1^+$ and $\Phi_2^+$ are the same, but $\g = \su(2,1)$ has two distinct Vogan diagrams (see Table \ref{rank 2 table}).
    \end{enumerate}
\end{remark}

\begin{example}
    \label{counterexample for snow}
    The Lie algebra $\g= \sl(5,\R)$  has only one Vogan diagram, which is
    $$\dynkin[
  labels={\alpha_1,\alpha_2,\alpha_3,\alpha_4},
  label directions={above,above,above,above},
  edge length=1cm,
  root radius=.1cm,
  involutions={14;23}
]{A}{oooo}.$$
 If $H_i$ denotes the $B$-dual of $\alpha_i$, then $\sigma H_1 = - H_4$ and $\sigma H_2 = - H_3$. Let $\RR = \{-\alpha_1\} \sqcup \Phi^+ \backslash\{\alpha_4\}$, so that $\h_0 = \C H_1$. Observe that $\RR$ is a closed subset of $\Phi$ satisfying $\RR \sqcup \sigma \RR = \Phi$. Let $\l := \C \{H_1, H_3\}$. Then $\l$ contains $\h_0$, and $\l \oplus \sigma \l  = \h^\C$. By Remark \ref{construction remark}, $$\q =  \l \oplus \bigoplus_{\alpha \in \RR}\g^\C_\alpha= \C \{H_1, H_3\} \oplus \g^\C_{-\alpha_1}\oplus \bigoplus_{\alpha \in \Phi^+ \backslash\{\alpha_4\}}\g^\C_\alpha $$ defines  a $\h$-regular complex structure. The orthogonal complement of $\h_0 \oplus \sigma \h_0 = \C\{H_1,H_4\}$ is $\s = \C\{H_1 + 2 H_2, H_4 + 2 H_3\}$, but clearly $\l \cap \s = 0$.
\end{example}

\begin{lemma}
    \label{grassmannian lemma}
    Fix $m \geq 1$, and let $\sigma$ denote the standard complex conjugation map of $\C^{2m}$. Let $\Gr_m(\C^{2m})$ denote the Grassmannian of $m$-dimensional complex subspaces of $\C^{2m}$. Fix an $r$-dimensional complex subspace $U \subseteq \C^{2m}$ such that $U \cap \sigma U = 0$. Then 
    $$\M = \left\{ V \in \Gr_m(\C^{2m}) : U \subseteq V \text{ and } \V \oplus \sigma V = \C^{2m}\right\}$$
    is an embedded submanifold of $\Gr_m(\C^{2m})$ with $\dim_\R \M = 2m (m-r).$
\end{lemma}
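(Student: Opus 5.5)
The plan is to realise $\M$ as an open subset of a smaller Grassmannian, which is a manifold of known dimension. Since $U \cap \sigma U = 0$, the space $W := U \oplus \sigma U$ has complex dimension $2r$ and is $\sigma$-stable. Note that $r \le m$ is forced whenever $\M \neq \emptyset$: indeed $U \subseteq V$ and $V \cap \sigma V = 0$ give $\dim V \ge r$. If $\M = \emptyset$ we should interpret the claim as vacuous, and the main case is $r \le m$.

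First consider the set $\Gr_m(\C^{2m})_U := \{V : U \subseteq V\}$. The map $V \mapsto V/U$ identifies it with $\Gr_{m-r}(\C^{2m}/U)$. This is a closed embedded complex submanifold of $\Gr_m(\C^{2m})$: in a standard affine chart adapted to a complement of $U$, it is cut out by linear equations. Its complex dimension is $(m-r)\big((2m-r)-(m-r)\big) = (m-r)m$, so its real dimension is $2m(m-r)$.

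Next I would show that $\M$ is an open subset of this submanifold. The condition $V \oplus \sigma V = \C^{2m}$ is equivalent to $V \cap \sigma V = 0$ for $m$-dimensional $V$. This in turn is equivalent to the non-vanishing of $\det[v_1,\dots,v_m,\overline{v_1},\dots,\overline{v_m}]$ for a local frame $v_i$ of $V$. That determinant is a continuous (real-analytic) function on charts, so the condition is open. Hence $\M$ is open in $\Gr_m(\C^{2m})_U$, and therefore an embedded submanifold of $\Gr_m(\C^{2m})$ of real dimension $2m(m-r)$.

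The remaining obstacle is non-emptiness, which is needed so that the dimension statement is meaningful and so that Remark \ref{choices remark}(iv) can use the lemma. Choose a $\sigma$-stable complement $W'$ of $W$. For instance, take the orthogonal complement of $W$ with respect to the standard Hermitian form; it is $\sigma$-stable because $W$ is. Then $\dim_\C W' = 2(m-r)$, and $W' = (W')^\sigma \otimes \C$ with $(W')^\sigma \cong \R^{2(m-r)}$. Pick a real basis $e_1,\dots,e_{2(m-r)}$ of $(W')^\sigma$ and set $U' := \C\{e_{2j-1} + \i e_{2j}\}$. Then $U' \oplus \sigma U' = W'$, and $V := U \oplus U'$ satisfies $V \oplus \sigma V = W \oplus W' = \C^{2m}$ with $U \subseteq V$. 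So $\M \neq \emptyset$ exactly when $r \le m$.
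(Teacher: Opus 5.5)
Your proposal is correct and follows essentially the same route as the paper. Both write $\M$ as the intersection of the submanifold $\{V : U \subseteq V\}$ (real dimension $2m(m-r)$, cut out by affine-linear equations in charts) with the open set $\{V : V \oplus \sigma V = \C^{2m}\}$, and both prove non-emptiness by splitting the $\sigma$-stable Hermitian orthogonal complement of $U \oplus \sigma U$. The paper checks linearity in the fixed coordinate charts $F_I$ rather than through $\Gr_{m-r}(\C^{2m}/U)$, which is only a cosmetic difference.

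One small point: the hypothesis $U \cap \sigma U = 0$ already forces $2r \le 2m$. So the case $\M = \emptyset$ never arises, and $\M$ is always non-empty.
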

\begin{proof}
    Let $e_1,\ldots, e_{2m}$ denote the standard basis of $\C^{2m}$. For each $m$-element subset $I$ of  $\{1,\ldots,2m\}$, let $V_I = \spann_\C\{e_i : i \in I\}$, $I^c = \{1,\ldots, 2m\} \backslash I$, 
    $$\U_I := \left\{ V \in \Gr_m(\C^{2m}) : V \cap V_{I^c} = 0\right\},$$
    $$F_I: \Hom_\C(V_I, V_{I^c}) \rightarrow \U_I, \qquad F_I(X) := \{v + Xv : v \in V_I\}.$$
    Each $\U_I$ is an open subset of $\Gr_m(\C^{2m})$, each $F_I:\Hom_\C(V_I, V_{I^c}) \rightarrow \U_I$ is a diffeomorphism, and the sets $\U_I$ cover $\Gr_m(\C^{2m})$.

    Now, define 
    $$\CC := \left\{ V \in \Gr_m(\C^{2m}) :V \oplus \sigma V = \C^{2m}\right\}.$$
    Let us show that $\CC$ is an open subset of $\Gr_m(\C^{2m})$. It suffices to show that $F_I^{-1}(\CC)$ is open for all $I$. Choose an identification $\Hom_\C(V_I, V_{I^c}) \cong M_m(\C)$ via the basis $e_1,\ldots, e_{2m}$. A straightforward computation shows that 
    $$F_I^{-1}(\CC) = \left\{ X \in M_m(\C) : \det( X - \overline X) \neq 0\right\},$$
    which is open.

    Next, let 
    $$\N :=  \left\{ V \in \Gr_m(\C^{2m}) : U \subseteq V\right\}.$$
    Let us show that $\N$ is an embedded submanifold of $\Gr_m(\C^{2m})$ of dimension $2m (m-r)$. It suffices to show that $F_I^{-1}(\N)$ is either empty or an embedded submanifold of $\Hom_\C(V_I, V_{I^c})$ of dimension $2m (m-r)$ for each $I$. Fix $I$, and suppose $F_I^{-1}(\N)$ is not empty, so that there exists $V \in   \Gr_m(\C^{2m})$ with $U \subseteq \V$ and $V \cap \V_{I^c} = 0$. In particular, $U \cap V_{I^c} = 0$, so $\pi_I |_U : U \rightarrow V_I$ is injective, where $\pi_I:\C^{2m} \rightarrow V_I$ is the projection map with respect to $\C^{2m} = V_I \oplus V_{I^c}$. Set $U_0 := \pi_I(U)$, and let $L:U_0 \rightarrow V_{I^c}$ be the composition 
    $$L: U_0 \xrightarrow{(\pi_I|_U)^{-1}} U \xhookrightarrow{} \C^{2m} \xrightarrow {\pi_{I^c}} V_{I^c}.$$
    A straightforward computation shows that 
    $$F^{-1}_I(\N) = \left\{ X \in \Hom_\C(V_I, V_{I^c}) : X|_{U_0} = L\right\},$$
    which is an embedded submanifold of $\Hom_\C(V_I, V_{I^c})$ of dimension $2m (m-r)$.

    We are done if we show that $\M = \CC \cap \N$ is non-empty. Let $E$ be the orthogonal complement of $U \oplus \sigma U$ with respect to the standard Hermitian inner product of $\C^{2m}$. Then $\sigma E = E$ and $E$ is even-dimensional, so there exists a complex subspace $W \subseteq E$ with $W \oplus \sigma W  = E$. Then $U \oplus W \in \M$.
\end{proof}

\subsection{Proof of Proposition \ref{regular structure prop}}
\label{regular structure proof}
Let us begin by showing (A). In the notation of Proposition \ref{regular structure prop}, it suffices to show the following (see Remark \ref{construction remark}):
    \begin{enumerate}
         \item[(A1)] $\RR$ is a closed subset of $\Phi$.
         \item[(A2)] $\RR \sqcup \sigma \RR = \Phi$.
         \item[(A3)] $H_\alpha \in \l$ for all $\alpha  \in \RR \cap (- \RR)$.
    \end{enumerate}

    Let us first show that $\RR$ is a closed subset of $\Phi$.
    For each root $\alpha \in \Phi$, let us write $\alpha$ in terms of the simple roots, i.e.
    $$\alpha = \sum_{\varphi \in \Delta} c_\varphi(\alpha) \varphi$$
    for some integers $c_\varphi(\alpha)$. Recall that the $c_\varphi(\alpha)$ are either all non-negative or  all non-positive, and $\alpha$ is a positive root if and only if $c_\varphi(\alpha) \geq 0$ for all $\varphi \in \Delta$. The \textit{support of $\alpha$} is
    $$\supp(\alpha) := \left\{ \varphi \in \Delta: c_\varphi(\alpha) \neq 0 \right\}.$$
    The subdiagram of the Dynkin diagram of $\Delta$ corresponding to  $\supp(\alpha) \subseteq \Delta$ is always connected \cite[page 179, Corollary 3]{bourbaki}. Observe that $\alpha \in \RR_0$ if and only if $\supp(\alpha) \subseteq \Delta_0$. Define $\RR_1 := \RR \backslash \RR_0 = \Phi^+ \backslash(\RR_0^+ \sqcup - \sigma \RR_0^+)$, and 
    $\Delta_1 := \Delta \backslash(\Delta_0 \sqcup - \sigma \Delta_0)$. Note that $\Delta_1$ does not necessarily generate $\RR_1$.  Let us show the following statement:
    \begin{equation}
        \label{RR1 condition}
        \text{ Let $\alpha \in \Phi^+$ be a positive root. Then $\alpha \in \RR_1$ if and only if $\supp(\alpha) \cap \Delta_1 \neq \emptyset$.}
    \end{equation}
    First, suppose that $\alpha \in \RR_1$. For the sake of contradiction, suppose that $\supp(\alpha) \cap \Delta_1 = \emptyset$. Then $\supp(\alpha) \subseteq \Delta_0 \sqcup - \sigma \Delta_0$. Recall that, by assumption,  $\Delta_0$ and $- \sigma \Delta_0$ are disconnected in the Dynkin diagram of $\Delta$. Since $\supp(\alpha)$ is connected, either $\supp(\alpha) \subseteq \Delta_0$ or $\supp(\alpha) \subseteq - \sigma \Delta_0$. Therefore, $\alpha \in \RR_0 \sqcup \sigma \RR_0$, which contradicts $\alpha \in \RR_1$.
    Conversely, suppose that $\supp(\alpha) \cap \Delta_1 \neq \emptyset$. Then $\alpha$ does not belong to $\RR_0 \sqcup \sigma \RR_0$, so $\alpha \in \RR_1$.

    Now, to show that $\RR$ is closed, fix $\alpha, \beta \in \RR$, and suppose that $\alpha + \beta$ is a root. If $\alpha, \beta \in \RR_0$, then clearly $\alpha + \beta \in \RR_0$.
    Suppose  $\alpha \in \RR_1$. By (\ref{RR1 condition}), there exists $\varphi \in \Delta_1$ such that $c_\varphi(\alpha) \geq 1$. If $\beta$ is positive, then $c_\varphi(\alpha + \beta) \geq c_\varphi(\alpha) \geq 1$. If $\beta \in \RR_0$, then $c_\varphi(\beta) = 0$, so $c_\varphi(\alpha + \beta) = c_\varphi(\alpha) \geq 1$. In either case, it follows that $\alpha + \beta \in\RR_1$, thanks to (\ref{RR1 condition}).

    Next, let us show that $\RR \sqcup \sigma \RR = \Phi$. Observe that 
    $$\RR \cap \sigma \RR = (\RR_1 \cap \sigma \RR_1) \sqcup (\RR_1 \cap \sigma \RR_0) \sqcup (\RR_0\cap \sigma \RR_1)\sqcup (\RR_0\cap \sigma \RR_0).$$
    Since $\Delta_0 \sqcup (-\sigma \Delta_0)$ is a linearly independent subset of $(\h^\C)^*$, it follows that $\C \Delta_0 \cap \C (- \sigma \Delta_0) = 0$, so $\RR_0\cap \sigma \RR_0 = \emptyset$.
    Since $\RR_1 = \Phi^+ \backslash (\RR_0 \sqcup  \sigma \RR_0)$, we have $\sigma \RR_1 = - \RR_1$. By definition, we have $\emptyset = \RR_1 \cap \sigma \RR_1 = \RR_1 \cap \sigma \RR_0 = \RR_0\cap \sigma \RR_1$. Therefore, $\RR \cap \sigma \RR  = \emptyset$.  The union $\RR \sqcup \sigma \RR = \Phi$ follows because  $|\RR| = \frac12 |\Phi|.$ 

    Finally, let us show that $H_\alpha \in \l$ for all $\alpha  \in \RR \cap (- \RR)$; it suffices to show that $\RR_0 = \RR \cap (-\RR)$. It is obvious that $\RR_0 \subseteq \RR \cap (-\RR)$. For the opposite inclusion, suppose $\alpha \in \RR \cap (-\RR)$. Since $\sigma \RR_1 = - \RR_1$,  (A2) implies that $(- \RR_1) \cap \RR = \emptyset$. Thus,  $\alpha \in \RR_0$. This completes the proof of (A).

    Let us show (B). Using the notation  of Proposition \ref{regular structure prop} Part (B), set
    $\RR_0 := \RR \cap (-\RR)$ and $ \RR_1 := \RR \backslash \RR_0.$ It suffices to show the following three facts:
    \begin{enumerate}
        \item[(B1)] Let $\RR^+_0$ be a set of positive roots for $\RR_0$. Then 
        $$\Phi^+ := \RR_0^+ \sqcup - \sigma \RR^+_0 \sqcup \RR_1$$
        is a set of positive roots for $\Phi$ such that $\sigma \Phi^+ = - \Phi^+$.
        \item[(B2)] If $\Delta_0$ denotes the simple roots of $\RR^+_0$ and $\Delta$ denotes the simple roots of $\Phi^+$, then $\Delta_0 \subseteq \Delta$.
        \item[(B3)] The subdiagrams corresponding to  $\Delta_0$ and $- \sigma \Delta_0$  are disconnected in the Dynkin diagram of $\Delta$.
    \end{enumerate}
    Indeed, suppose that (B1), (B2) and (B3) hold. (B1) implies that $\sigma:\Phi \rightarrow \Phi$ has no fixed points. Thus, $\h$ is maximally compact, since there are no real roots. Parts (B1), (B2) and (B3) imply that $(\Phi^+, \Delta_0, \l)$ satisfy the conditions described in Algorithm \ref{regular complex structures construction algorithm}. Finally, observe that $\RR_0 = \C \Delta_0 \cap \Phi$, $\RR_0^+ = \RR_0 \cap \Phi^+$, and $\RR = (-\RR_0^+) \sqcup \Phi^+ \backslash( - \sigma \RR_0^+),$ so $\q$ is one of the subalgebras constructed in Part (A). 

    Let us show (B1). We begin by showing the following two facts:
    \begin{enumerate}[(i)]
        \item $\sigma \RR_1 = - \RR_1$.
        \item If $\alpha \in \RR_0$ and $\beta \in \sigma \RR_0$, then $\alpha + \beta$ is not a root.
    \end{enumerate}
    To show (i), observe that 
    $$\RR_0 \sqcup \RR_1 \sqcup \sigma \RR_0 \sqcup  \sigma \RR_1 = \Phi = - \Phi = \RR_0 \sqcup (-\RR_1) \sqcup \sigma \RR_0 \sqcup  (-\sigma \RR_1). $$
    It is easy to see that  $\emptyset = \RR_1 \cap \RR_0 = \RR_1 \cap (-\RR_1) = \RR_1 \cap \sigma \RR_0$. Therefore, it follows that $\RR_1 \subseteq - \sigma \RR_1$, so $\RR_1 = - \sigma \RR_1$, since these two sets have the same cardinality. To show (ii), fix $\alpha \in \RR_0$, $\beta \in \sigma \RR_0$, and for the sake of contradiction, suppose $\alpha + \beta$ is a root. Without loss of generality, suppose that $\alpha + \beta \in \RR$. Since $\RR$ is closed,  $\beta = (\alpha + \beta) - \alpha \in \RR$, a contradiction. Now, to show (B1), fix a set of positive roots $\RR^+_0$ of $\RR_0$, and define $\Phi^+$ as in (B1). First, let us show that $\Phi^+$ is a closed subset. Fix $\alpha, \beta \in \Phi^+$, and suppose $\alpha + \beta$ is a root.
    \begin{enumerate}
        \item If $\alpha,\beta \in \RR^+_0$, then $\alpha + \beta \in \RR^+_0$, since $\RR_0$ is a closed subset and $\RR^+_0$ is a set of positive roots for $\RR_0$. Thus, if $\alpha, \beta \in - \sigma \RR^+_0$, then $\alpha + \beta \in - \sigma \RR^+_0$.
        \item If $\alpha \in \RR_1$  and $\beta \in \RR $, then  Remark \ref{closed subsets prop} implies that $\alpha + \beta \in \RR_1$. 
        \item If $\alpha \in \RR_1$ and $\beta \in - \sigma \RR_0^+$, then (i) implies that $-\sigma \alpha\in \RR_1$ and $- \sigma \beta \in \RR_0$. Thus, Remark \ref{closed subsets prop} implies that $-\sigma (\alpha + \beta) \in \RR_1$, so $\alpha + \beta \in \RR_1$ by (i).
        \item Fact (ii) implies that the case $\alpha \in \RR_0^+$ and $\beta \in -\sigma \RR_0^+$ cannot happen.
    \end{enumerate}
    In all cases, we find that $\alpha + \beta \in \Phi^+$.  It is easy to see that  $\Phi^+ \sqcup (- \Phi^+) = \Phi$.  The equality $\sigma \Phi^+ = - \Phi^+$ follows from (i).

    To show (B2), fix $\gamma \in \Delta_0$. For the sake of contradiction, suppose $\gamma$ is not simple in $\Phi^+$, i.e. $\gamma = \alpha + \beta$ for some $\alpha, \beta \in \Phi^+$. From the previous paragraph, the only possibility is that $\alpha, \beta \in \RR_0^+$, but this contradicts  the simplicity of $\gamma$  in $\RR^+_0$.

    Finally, let us show (B3). Since $\RR \cap \sigma \RR = \emptyset$, it follows that $\Delta_0 \cap (- \sigma \Delta_0) = \emptyset$. For the sake of contradiction, suppose that $\alpha \in \Delta_0$ and $\beta \in -\sigma \Delta_0$ share an edge. By definition, this means that $\langle \alpha, \beta \rangle < 0$. Thus, $\alpha + \beta$ is a root by \cite[Proposition 2.48]{knapp}. However, (ii) tells us that $\alpha + \beta$ is not a root.

\section{The balanced condition}
\label{balanced section}
In this section, let $G$ be a real semisimple Lie group with Lie algebra $\g$. Let $\h$ be a maximally compact Cartan subalgebra of $\g$, and let $\Phi$ denote the roots of $(\g^\C, \h^\C)$. Let $\t$ be as in (\ref{t and a definitions}), and let $T$ be the connected subgroup of $G$ with Lie algebra $\t$. Let $J = J (\Phi^+, \Delta_0, \l)$ be an $\h$-regular complex structure (see Proposition \ref{regular structure prop}). 

The goal of this section is to prove Theorem \ref{balanced technical theorem}. Observe that $J$ is right $T$-invariant (see \S \ref{symmeterisation 2 subsection}). Thus, by symmetrisation (Proposition \ref{symmetrizing to right T invariant} and Lemma \ref{AdGT is compact}), it suffices to consider the balanced equation for left-invariant right $T$-invariant Hermitian metrics on $(G,J)$.

\subsection{Left-invariant right $T$-invariant balanced metrics}
By Lemma \ref{sigma adapted root vectors lemma}, there exist non-zero root vectors $E_\alpha \in \g^\C_\alpha$ such that 
\begin{equation}
    \label{sigma adapted root vectors equations}
    [E_\alpha, E_{-\alpha}] = H_\alpha, \qquad \sigma E_\alpha = \begin{cases}
    E_{\sigma \alpha} & \text{if $\alpha$ is complex, } \\ -s_\alpha E_{-\alpha} & \text{if $\alpha$ is imaginary},
\end{cases}
\end{equation}
where $H_\alpha$ is the dual of $\alpha$ with respect to the Killing form $B$  of $\g^\C$ restricted to $\h^\C$, and
$$s_\alpha := \begin{cases}
    1 & \text{if $\alpha$ is compact,} \\ -1 & \text{if $\alpha$ is non-compact.}
\end{cases}$$
Let $\varepsilon^\alpha: \g^\C \rightarrow \C$ denote the dual of $E_\alpha$ with respect to $\g^\C = \h^\C \oplus \bigoplus_{\alpha \in \Phi} \g^\C_\alpha$. 

By definition of a regular complex structure, the $\i$-eigenspace of $J = J(\Phi^+, \Delta_0, \l)$ is 
$$\q = \l \oplus \bigoplus_{\alpha \in \RR} \g^\C_\alpha, \qquad \RR = \RR_0 \sqcup \Phi^+_{\Im} \sqcup \RR_2,$$
where $\RR_0 = \R \Delta_0 \cap \Phi$, $\Phi^+_{\Im}$ are the imaginary positive roots, and $\RR_2$ is the set of complex roots in $\Phi^+ \backslash (\RR_0^+ \sqcup  (- \sigma \RR_0^+))$.  Using Proposition \ref{right T invariant metrics form}, it is easy to see the following:

\begin{lemma}
    \label{right T invariant Hermitian metrics form}
    Let $g$ be a left-invariant right $T$-invariant Hermitian metric on $(G,J)$, and let $\omega := g(J \cdot,\cdot)$ be the corresponding positive $(1,1)$-form. Then 
    $$\omega = \omega|_{\h \times \h} + \i \sum_{\alpha \in \RR} \lambda_\alpha \varepsilon^\alpha \wedge \sigma \varepsilon^\alpha +\i \sum_{\alpha \in \RR_2} \mu_\alpha \varepsilon^\alpha \wedge \varepsilon^{-\alpha}, $$
    where 
    \begin{enumerate}[\normalfont (i)]
        \item  $\omega|_{h \times \h}$ is a positive $(1,1)$-form on $(\h, J|_{\h})$,
        \item $\lambda_\alpha = g(E_\alpha, \sigma E_\alpha)$ is a positive real number for any $\alpha \in \RR$, and 
        \item $\mu_\alpha = g(E_\alpha, E_{- \alpha})$ for $\alpha \in \RR_2$ are complex numbers satisfying $\mu_{- \sigma \alpha} = \overline{\mu_\alpha}$ and $\lambda_\alpha \lambda_{-\sigma \alpha} - |\mu_\alpha|^2 > 0.$
    \end{enumerate}
    Conversely, any such $\omega$ defines a left-invariant right $T$-invariant Hermitian metric on $(G,J)$.
\end{lemma}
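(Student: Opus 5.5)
We need to determine which pairings $g(X,Y)$ with $X,Y\in\g^\C$ can be non-zero, using Proposition \ref{right T invariant metrics form} together with the Hermitian condition. We then convert the surviving entries into the coefficients of $\omega$. The plan is as follows.

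\textbf{Step 1: the $(1,0)$-vectors.} Since $g$ is $J$-invariant, its complex-bilinear extension vanishes on $\q\times\q$ and on $\sigma\q\times\sigma\q$. Hence $\omega$ is determined by the Hermitian pairing $h(X,Y)=g(X,\sigma Y)$ for $X,Y\in\q$. A basis of $(1,0)$-vectors is $\l$ together with $E_\alpha$ for $\alpha\in\RR$.

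\textbf{Step 2: which pairings survive.} By Proposition \ref{right T invariant metrics form}(i), the space $\l\subseteq\h^\C$ is $g$-orthogonal to every root space. So $\omega$ splits as $\omega|_{\h\times\h}$ plus a root part, and $\omega|_{\h\times\h}$ is a positive $(1,1)$-form for $J|_\h$, because $\h=\h^\C\cap\g$ is $J$-invariant with $\h^\C=\l\oplus\sigma\l$. For $\alpha,\beta\in\RR$, note that $\sigma E_\beta\in\g^\C_{\sigma\beta}$. Proposition \ref{right T invariant metrics form}(ii) then says $g(E_\alpha,\sigma E_\beta)\neq0$ only if $\sigma\beta=-\alpha$ or $\sigma\beta=\sigma\alpha$, i.e.\ $\beta=\alpha$ or $\beta=-\sigma\alpha$.
\begin{enumerate}[(a)]
\item If $\alpha$ is imaginary, then $-\sigma\alpha=\alpha$, so only the diagonal term survives.
\item If $\alpha\in\RR_0$, then $-\sigma\alpha\in-\sigma\RR_0$, which is disjoint from $\RR$ (indeed $\RR\cap\sigma\RR=\emptyset$ and $\RR_0=-\RR_0$). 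So again only the diagonal term survives.
\item If $\alpha\in\RR_2$, then $-\sigma\alpha\in\RR_2$, since $\sigma\RR_1=-\RR_1$ and complexity is preserved; moreover $-\sigma\alpha\neq\alpha$ because $\alpha$ is complex. In this case there is also an off-diagonal entry $g(E_\alpha,\sigma E_{-\sigma\alpha})$. Since $\alpha$ is complex, (\ref{sigma adapted root vectors equations}) gives $\sigma E_{-\sigma\alpha}=E_{-\alpha}$, so this entry equals $\mu_\alpha=g(E_\alpha,E_{-\alpha})$.
\end{enumerate}

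\textbf{Step 3: writing $\omega$.} Set $\lambda_\alpha=g(E_\alpha,\sigma E_\alpha)$. This is positive, because $g(Z,\sigma Z)>0$ for $Z\neq0$ by positive definiteness of the real metric on $Z+\sigma Z$ and $\hat\i(Z-\sigma Z)$. Using $\omega=g(J\cdot,\cdot)$, with $JE_\alpha=\i E_\alpha$ and $J\sigma E_\alpha=-\i\sigma E_\alpha$, we get $\omega(E_\alpha,\sigma E_\beta)=\i g(E_\alpha,\sigma E_\beta)$. The dual coframe of $\sigma E_\beta$ is $\sigma\varepsilon^\beta$. Hence $\omega$ has the stated form: diagonal terms $\i\lambda_\alpha\varepsilon^\alpha\wedge\sigma\varepsilon^\alpha$, and off-diagonal terms with $\sigma\varepsilon^{-\sigma\alpha}=\varepsilon^{-\alpha}$. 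Any normalisation factor depends on the wedge convention and does not affect the argument.

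\textbf{Step 4: the constraints on $\mu_\alpha$.} The relation $\mu_{-\sigma\alpha}=\overline{\mu_\alpha}$ follows from reality of $g$ and symmetry:
$$\overline{g(E_\alpha,E_{-\alpha})}=g(\sigma E_\alpha,\sigma E_{-\alpha})=g(E_{\sigma\alpha},E_{-\sigma\alpha})=\mu_{-\sigma\alpha}.$$
Since off-diagonal terms only pair $\alpha$ with $-\sigma\alpha$, the Hermitian matrix of $h$ on the root part is block diagonal. The blocks are $1\times1$ with entry $\lambda_\alpha$, and $2\times2$ of the form
$$\begin{pmatrix}\lambda_\alpha&\mu_\alpha\\ \overline{\mu_\alpha}&\lambda_{-\sigma\alpha}\end{pmatrix}$$
for each pair $\{\alpha,-\sigma\alpha\}\subseteq\RR_2$. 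Positive definiteness of $h$ is therefore equivalent to $\omega|_{\h\times\h}>0$, all $\lambda_\alpha>0$, and each $2\times2$ determinant being positive.

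\textbf{Converse.} Given data $(\omega|_{\h\times\h},\lambda_\alpha,\mu_\alpha)$ satisfying these conditions, define $h$ by the block matrix above. This gives a real $J$-Hermitian metric. It satisfies conditions (i) and (ii) of Proposition \ref{right T invariant metrics form}, so it is right $T$-invariant by that proposition.

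The main point needing care is Step 2(b)–(c): correctly identifying that $-\sigma\alpha\in\RR$ exactly for $\alpha\in\RR_2\sqcup\Phi^+_{\Im}$. The rest is bookkeeping of conjugations.
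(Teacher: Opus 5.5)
Your proposal is correct and takes the same route as the paper, which states the lemma as an immediate consequence of Proposition \ref{right T invariant metrics form} without writing out a proof. Your Steps 2--4 supply the verification the paper leaves implicit: $\h^\C$ is $g$-orthogonal to every root space, $g(E_\alpha,\sigma E_\beta)\neq 0$ forces $\beta\in\{\alpha,-\sigma\alpha\}$, and $-\sigma\alpha\in\RR$ exactly when $\alpha\in\Phi^+_{\Im}\sqcup\RR_2$, after which positivity reduces to the $1\times 1$ and $2\times 2$ blocks.
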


Decompose $\Phi^+_{\Im} = \Phi^+_{\c} \sqcup \Phi^+_{\nc}$, where $\Phi^+_{\c}$ and $\Phi^+_{\nc}$ are the compact and non-compact positive roots, respectively. Observe that $- \sigma \RR_2 = \RR_2$, so $-\sigma$ partitions $\RR_2$ into two-element orbits, and we can choose a subset $\RR_3 \subseteq \RR_2$ such that $\RR_3 \sqcup (-\sigma \RR_3) = \RR_2$.

\begin{proposition}
    \label{balanced equation prop}
    Using the notation of Lemma \ref{right T invariant Hermitian metrics form}, a left-invariant right $T$-invariant Hermitian metric $g$ on $(G,J)$ is balanced if and only if 
    \begin{equation}
        \label{balanced equation}
        \sum_{\alpha \in \Phi^+_{\c}} \frac{\alpha}{\lambda_\alpha} - \sum_{ \alpha \in \Phi^+_{\nc}} \frac{\alpha}{\lambda_\alpha} + \sum_{\alpha \in \RR_2} \frac{\mu_{- \sigma \alpha } \alpha}{D_\alpha} = 0,
    \end{equation}
    where $D_\alpha := \lambda_\alpha \lambda_{- \sigma \alpha} - |\mu_\alpha|^2 > 0$.
\end{proposition}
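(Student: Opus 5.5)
The plan is to use the standard reformulation of the balanced condition on a unimodular Lie group. A Hermitian metric on $(G,J)$ with fundamental form $\omega$ is balanced if and only if its Lee form $\theta_\omega$ vanishes. Equivalently, $d^*\omega = 0$, or $\omega^{n-1}$ is closed. For left-invariant forms on a unimodular Lie algebra this becomes a linear condition on $d\omega$. Concretely, $d(\omega^{n-1}) = (n-1)\, d\omega \wedge \omega^{n-2}$, and this vanishes if and only if the contraction $\Lambda_\omega(d\omega) = 0$, i.e. the trace of $d\omega$ against the dual metric vanishes. So I would compute the $(2,1)$- and $(1,2)$-parts of $d\omega$ paired with $\omega^{-1}$. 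It suffices to compute $\partial \omega^{n-1} = 0$, since the other part is conjugate. Because $G$ is semisimple, hence unimodular, $\operatorname{tr}\ad(X) = 0$ for all $X$. Testing against a basis, the balanced condition reduces to
\[
\sum_{a,b} g^{a\bar b}\, d\omega(X, Z_a, \overline{Z_b}) = 0 \qquad \text{for all } X \in \g^\C,
\]
where $\{Z_a\}$ is a basis of $\q$ and $g^{a\bar b}$ is the inverse of the Hermitian matrix $g(Z_a, \overline{Z_b})$.

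First, I will choose the basis of $\q$ adapted to Lemma~\ref{right T invariant Hermitian metrics form}: a basis of $\l$ together with $E_\alpha$ for $\alpha\in\RR$. By Proposition~\ref{right T invariant metrics form}, the Hermitian matrix is block diagonal. There is one block on $\l$, a $1\times1$ block $\lambda_\alpha$ for each $\alpha\in\RR_0\sqcup\Phi^+_{\Im}$, and a $2\times2$ block $\begin{pmatrix}\lambda_\alpha & \mu_\alpha\\ \overline{\mu_\alpha} & \lambda_{-\sigma\alpha}\end{pmatrix}$ pairing $E_\alpha$ and $E_{-\sigma\alpha}$ for $\alpha\in\RR_3$. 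This uses $\sigma E_{-\sigma\alpha} = E_{-\alpha}$. The inverse of the $2\times2$ block has entries $\lambda_{-\sigma\alpha}/D_\alpha$, $-\mu_\alpha/D_\alpha$, and so on, which explains the appearance of $D_\alpha$.

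Second, I will use right $T$-invariance. The expression $\sum g^{a\bar b} d\omega(X,Z_a,\overline{Z_b})$ defines a $T$-invariant $1$-form on $\g$, and hence it is supported on $\h$, the weight-zero part. So only $X = H\in \h^\C$ needs to be tested, and the equation is an identity in $(\h^\C)^*$. Using $d\omega(H,Y,Z) = -\omega([H,Y],Z)+\omega([H,Z],Y)-\omega([Y,Z],H)$, the terms with $[H,\cdot]$ contribute $(\alpha-\beta)(H)$ times metric entries. In a block these either cancel or produce terms involving $\mu$. The term $\omega([E_\alpha,\sigma E_\alpha],H)$ produces $H_\alpha$-type contributions paired against $\omega|_\h$. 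I need to organise these carefully. Each contribution is proportional to $\alpha(H)$ times a sign, which comes from $\sigma E_\alpha = -s_\alpha E_{-\alpha}$ for imaginary roots, so $[E_\alpha,\sigma E_\alpha] = -s_\alpha H_\alpha$. That sign produces the relative minus between $\Phi^+_{\c}$ and $\Phi^+_{\nc}$. For complex roots, $\sigma E_\alpha = E_{\sigma\alpha}$, and the bracket $[E_\alpha, E_{-\alpha}]$ appears through the off-diagonal inverse entry $\mu_{-\sigma\alpha}/D_\alpha$. Diagonal complex terms contribute $(\alpha+\sigma\alpha)$, which vanishes after pairing with $\t$-type elements, or cancel between $\alpha$ and $-\sigma\alpha$.

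Third, I will show that the $\l$-block and the $\RR_0$ terms contribute nothing. For $\RR_0$, the pairs $\alpha, -\alpha$ both lie in $\RR$. Their $H_\alpha$ contributions cancel, or combine with $\sigma\RR_0$ to give zero via unimodularity, since $\sum_{\alpha\in\Phi}\alpha = 0$. The $\h$-block contributes nothing because $\h$ is abelian and $[\h,\h]=0$. The map $\alpha\mapsto \omega(H_\alpha,\cdot)$ is injective, since $\omega|_\h$ is nondegenerate, so the vanishing of the Lee form becomes exactly the vector identity \eqref{balanced equation} in $(\h^\C)^*$. The main obstacle is this bookkeeping: tracking the signs $s_\alpha$, the conjugations, and the cross terms in the $2\times2$ blocks so that the complex-root contribution collapses precisely to $\mu_{-\sigma\alpha}\alpha/D_\alpha$ summed over $\RR_2$. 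To keep it manageable, I would first verify the formula in the inner case against \cite[Equation 2.3]{giusti_podesta_2023}, and then handle one quadruple $\pm\alpha,\pm\sigma\alpha$ separately.
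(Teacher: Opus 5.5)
Your proposal is correct, and its computational core is the paper's. It uses the same block-diagonal Gram matrix on $\q$, the same $2\times 2$ inverse producing $D_\alpha$, and the sign $[E_\alpha,\sigma E_\alpha]=-s_\alpha H_\alpha$ for imaginary $\alpha$. The off-diagonal inverse entries $-\mu_{-\sigma\alpha}/D_\alpha$ multiply $[E_\alpha,E_{-\alpha}]=H_\alpha$, and nondegeneracy of $\omega|_{\h^\C}$ together with Killing duality then lands the condition in $(\h^\C)^*$.

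You differ only in how you reach the algebraic criterion. The paper quotes \cite[Lemma 2.1]{Andrada}: for a basis $\{V_i\}$ of $\q$ orthonormal for $g(\cdot,\sigma\cdot)$, $g$ is balanced iff $\sum_i[V_i,\sigma V_i]=0$. It then evaluates this sum with a Gram--Schmidt basis, which is equivalent to your inverse matrix. You instead rederive that criterion from $\Lambda_\omega d\omega=0$.

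If you organise your derivation as that lemma does, most of the bookkeeping you flag as the main obstacle disappears. In $\sum_i d\omega(X,V_i,\sigma V_i)$, the two terms containing $[X,\cdot]$ add up to a multiple of $\operatorname{tr}\ad X$, and the third term is $-\omega(\sum_i[V_i,\sigma V_i],X)$. For $X=H\in\h^\C$ the trace is $\sum_{\alpha\in\Phi}\alpha(H)=0$.

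This also corrects two of your descriptions. First, the $[H,\cdot]$ terms carry the weight $(\alpha+\sigma\beta)(H)$, not $(\alpha-\beta)(H)$. They contribute nothing in total, so all $\mu$-dependence comes from the bracket term, not from these terms.

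Second, $\RR_0$ does not drop out through cancelling $H_\alpha$-contributions; there are none. It drops out because $[E_\alpha,\sigma E_\alpha]=[E_\alpha,E_{\sigma\alpha}]=0$ for every complex $\alpha$. The functional $\alpha+\sigma\alpha$ is nonzero and $\sigma$-fixed, so it would be a real root, and a maximally compact $\h$ has none. The paper states this fact explicitly, and it is also what kills the diagonal $\RR_2$ brackets, so it must appear in your write-up.

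Your version is self-contained; the paper's is shorter because it never has to handle the $[H,\cdot]$ terms.
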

\begin{proof}
    Let $\{V_1,\ldots, V_n\}$ be an orthonormal basis of $\q$ with respect to the Hermitian inner product $g(\cdot, \sigma \cdot): \q \times \q \rightarrow \C$. By \cite[Lemma 2.1]{Andrada}, $g$ is balanced if and only if 
    $$\sum [V_i, \sigma V_i] = 0.$$ 
    By the Gram-Schmidt process, an orthonormal basis $\{V_i\}$ for $\q$ is given by
    $$\{H_1,\ldots, H_r\} \sqcup \left\{\frac{1}{\sqrt{\lambda_\alpha}}E_\alpha : \alpha \in \RR_0 \sqcup \Phi^+_{\Im}\right\} \sqcup \left\{Z_\alpha^{(1)}, Z_\alpha^{(2)} : \alpha \in \RR_3\right\}, $$
    where $H_1,\ldots, H_r$ is an orthonormal basis for $\l$, and 
    $$Z_\alpha^{(1)} = \frac{1}{\sqrt{\lambda_\alpha}}E_\alpha, \qquad Z_\alpha^{(2)} = \frac{\sqrt{\lambda_\alpha}}{\sqrt{D_\alpha}} \left( E_{- \sigma \alpha} - \frac{\overline{ \mu_\alpha}}{ \lambda_\alpha} E_\alpha\right).$$
    We find 
    \begin{align*}
        \sum[V_i, \sigma V_i] &= \sum \underbrace{[H_i, \sigma H_i]}_{ = 0} + \sum_{\alpha \in \RR_0} \frac{1}{\lambda_\alpha}  \underbrace{[E_\alpha, \sigma E_\alpha]}_{=0}  + \sum_{\alpha \in \Phi^+_{\Im}} \frac{1}{\lambda_\alpha}  [E_\alpha, \sigma E_\alpha] + \\
        & \qquad + \sum_{\alpha \in \RR_3} \frac{1}{\lambda_\alpha}  \underbrace{[E_\alpha, \sigma E_\alpha]}_{=0} + \sum_{\alpha \in \RR_3} [Z^{(2)}_\alpha, \sigma Z^{(2)}_\alpha] \\
        &= -  \sum_{\alpha \in \Phi^+_{\Im}} \frac{s_\alpha}{\lambda_\alpha} H_\alpha - \sum_{\alpha \in \RR_3} \left( \mu_{-\sigma \alpha} H_\alpha + \mu_\alpha H_{- \sigma \alpha}\right).
    \end{align*}
    The first equality holds because $\alpha + \sigma \alpha$ is real for any root $\alpha$, so $\alpha + \sigma \alpha$ cannot be a root since $\h$ is maximally compact. In particular, $[\g^\C_\alpha, \g^\C_{\sigma \alpha}] = 0$ for complex $\alpha$. The second equality follows by the properties of the root vectors $E_\alpha$  (\ref{sigma adapted root vectors equations}). 
    Transferring  the above to $(\h^\C)^*$ via the Killing form gives Equation (\ref{balanced equation}).
\end{proof}

Observe that Equation (\ref{balanced equation}) does not depend on  the choice of $\l$  nor $g|_{\h\times \h}$. Therefore, the left-invariant balanced condition on $(G,J)$ is determined by $(\V, \Delta_0)$, where $\V$ is the Vogan diagram determined by $\Phi^+$.

\subsection{Balanced pairs $(\V, \Delta_0)$} 
\label{balanced pairs subsection}Let $\V$ be an abstract Vogan diagram.   By Proposition \ref{Vogan diagrams correspondence}, $\V$ determines a root system $\Phi$, a complex conjugation map $\sigma: \Phi \rightarrow \Phi$, a decomposition of the roots into complex, compact and non-compact, and a set of positive roots $\Phi^+$ such that $\sigma \Phi^+ = - \Phi^+$.

Let $\Delta_0$ be a subset of the vertices of $\V$ such that $\Delta_0$ and $- \sigma \Delta_0$ are disconnected in the underlying Dynkin diagram of $\V$.
Let $\RR_0 := \R \Delta_0 \cap \Phi$, and let $\RR_2$ be the complex roots in $\Phi^+ \backslash (\RR_0^+ \sqcup (-\sigma \RR_0^+))$.

\begin{definition}
    We say that the pair $(\V, \Delta_0)$ is \textit{balanced} if there exists a solution to Equation (\ref{balanced equation}) for some $\lambda_\alpha > 0$, $\alpha \in \RR$ and $\mu_\alpha \in \C$, $\alpha \in \RR_2$ satisfying $\mu_{-\sigma \alpha} = \overline{\mu_\alpha}$ and $D_\alpha := \lambda_\alpha \lambda_{-\sigma \alpha} - |\mu_\alpha|^2 > 0$.
\end{definition}

Let $G$ be a real semisimple Lie group, let $J = J(\Phi^+, \Delta_0, \l)$ be a regular complex structure, and let $\V$ be the Vogan diagram determined by $\Phi^+$. Decompose $\V = \V^{(1)}\sqcup \cdots \sqcup \V^{(s)}$, where $\V^{(i)}$ are the connected components of $\V$, and let $\Delta_0 = \Delta_0^{(1)} \sqcup \cdots \sqcup \Delta_0^{(s)}$ be the corresponding decomposition of $\Delta_0$.
By Proposition \ref{balanced equation prop}, the following are equivalent:
\begin{enumerate}[(i)]
    \item $(G,J)$ admits a left-invariant balanced metric.
    \item The pair $(\V, \Delta_0)$ is balanced.
    \item Each $(\V^{(i)}, \Delta_0^{(i)})$ is balanced for $i = 1,\ldots, s$.
\end{enumerate}
Thus, Theorem \ref{balanced technical theorem} is proved if we show the following:
\begin{proposition}
    \label{balanced pairs prop}
    Let $\V$ be connected, and let $\Delta_0$ be as above. Then $(\V, \Delta_0)$ is balanced if and only if $\V$ is not listed in Table \ref{vogan table}.
\end{proposition}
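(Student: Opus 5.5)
The plan is to reduce everything to a linear-algebra feasibility question in $(\h^\C)^*$, and then split into the inner case and the non-inner case. The reduction rests on Proposition \ref{characterisation of type prop} and Proposition \ref{table 1 characterisation prop}. If $\V$ is inner, then all roots are imaginary, $\Delta_0=\emptyset$ and $\RR_2=\emptyset$. Equation (\ref{balanced equation}) then asks whether $0=\sum_{\alpha\in\Phi^+} s_\alpha c_\alpha\,\alpha$ for some $c_\alpha=1/\lambda_\alpha>0$. Since the $c_\alpha$ are arbitrary positive reals, Stiemke's alternative applies: a positive solution exists if and only if there is no $x\in\i\t$ (real-valued on the roots) with $s_\alpha\alpha(x)\ge 0$ for all $\alpha\in\Phi^+$ and strict inequality for at least one $\alpha$.

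For the direction ``listed in Table \ref{vogan table} $\Rightarrow$ not balanced'' I would exhibit such an $x$ directly. In the compact case, take $x$ dominant regular. In the one-painted-vertex case, Lemma \ref{single painted vertex lemma}(iii) gives $c_k(\alpha)\in\{0,1\}$ for all $\alpha\in\Phi^+$, with $c_k(\alpha)=1$ exactly for the non-compact positive roots. Take $x=x_0-N\varpi_k^\vee$, where $x_0$ is dominant regular, $\varpi_k^\vee$ is the fundamental coweight, and $N$ is large. Then $\alpha(x)>0$ on compact positive roots and $\alpha(x)<0$ on non-compact ones, so pairing (\ref{balanced equation}) with $x$ gives a strictly positive number, which is a contradiction.

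For the converse in the inner case, suppose $\V$ is not in the table. By Proposition \ref{table 1 characterisation prop} there are non-compact positive $\alpha,\beta$ with $\alpha+\beta$ a (compact) root. Suppose $x$ satisfies the Stiemke inequalities. Then $\alpha(x)\le0$, $\beta(x)\le 0$ and $(\alpha+\beta)(x)\ge0$, so all three vanish. The main obstacle is to propagate this vanishing to every root, forcing $x=0$. I would try the following:
\begin{enumerate}
\item Let $S=\{\gamma\in\Phi^+: s_\gamma\gamma(x)>0\}$ and assume $S\neq\emptyset$.
\item Take $\gamma\in S$ and combine it with the triple above via root strings. Adding or subtracting $\gamma$ to or from $\alpha$, $\beta$ or $\alpha+\beta$ produces a root whose compactness (Corollary \ref{compactness addition rules}) has the wrong sign for the required inequality.
\item Organise this case analysis along a path between two painted vertices as in the proof of Proposition \ref{table 1 characterisation prop}, or along the decomposition $\widetilde\alpha=\alpha_{i_1}+\dots+\alpha_{i_m}$ in the single-painted case.
\end{enumerate}
If this becomes messy, a fallback is to construct explicit $\lambda_\alpha$, as Giusti--Podest\`a do for their structures, and perturb.

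In the non-inner case, write each $\alpha\in\RR_3$ together with $-\sigma\alpha$. With $\mu_\alpha$ real, the contribution of the pair to (\ref{balanced equation}) is $\frac{\mu_\alpha}{D_\alpha}(\alpha-\sigma\alpha)=\frac{2\mu_\alpha}{D_\alpha}\,\i\Im\alpha$. Setting $\lambda_\alpha=\lambda_{-\sigma\alpha}=L$ and letting $\mu_\alpha$ range over $(-L,L)$, the coefficient $\mu_\alpha/(L^2-\mu_\alpha^2)$ takes every real value. All imaginary-root terms also lie in $\i\t^*$. It therefore suffices to show that the vector $v=\sum_{\Phi^+_{\c}}\alpha/\lambda_\alpha-\sum_{\Phi^+_{\nc}}\alpha/\lambda_\alpha$ lies in $\spann_\R\{\i\Im\alpha:\alpha\in\RR_3\}$.

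If $\V$ is complex there are no imaginary roots, so $v=0$ and $\mu=0$ works. For the diagrams in Table \ref{mixed diagrams}, I would check that the imaginary parts of the complex roots in $\RR_2$ span all of $\i\t^*$. This uses that $\Delta_1$ is non-empty and contains a complex simple root, or a complex simple root adjacent to the $\Delta_0$-part. Roots of the form $\gamma+\delta$, with $\gamma$ complex simple outside $\Delta_0\sqcup-\sigma\Delta_0$, then have imaginary parts spanning $\i\t^*$, by a direct check on the five families. Since no non-inner diagram appears in Table \ref{vogan table}, this would complete the proof.
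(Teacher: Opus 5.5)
Your direction ``listed in Table~\ref{vogan table} $\Rightarrow$ not balanced'' is correct and matches the paper; your $x_0-N\varpi_k^\vee$ plays the role of the paper's $y=-e_k$. The Stiemke reduction is also right. The converse, however, has two genuine gaps.

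\textbf{Inner case.} You fix the triple $\alpha,\beta,\alpha+\beta$ from Proposition~\ref{table 1 characterisation prop}, show that $x$ vanishes on it, and then only list things you would try. Vanishing on one fixed triple does not propagate by itself. The Giusti--Podest\`a fallback only concerns diagrams with at least two painted vertices, so it misses, e.g., $\sp(1,1)$ and $\g_2^{\nc}$ with one painted vertex.

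The missing idea (Lemma~\ref{yk lemma}) is to choose the triple \emph{after} $x$, at a painted simple root where $x$ is strictly negative. Put $y_i=\alpha_i(x)$, so $y_i\ge 0$ at compact vertices and $y_i\le0$ at painted ones. If $y\neq0$, then some painted $y_k<0$. Otherwise some compact $y_{i_1}>0$; walk from $\alpha_{i_1}$ through compact vertices to the first painted vertex $\alpha_{i_\ell}$. By Lemma~\ref{root chain lemma} and Corollary~\ref{compactness addition rules}, $\alpha_{i_1}+\dots+\alpha_{i_\ell}$ is a non-compact root, which forces $y_{i_\ell}\le-(y_{i_1}+\dots+y_{i_{\ell-1}})<0$.

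Now finish as follows. With one painted vertex, Lemma~\ref{single painted vertex lemma}(iv) gives a non-compact $\alpha$ with $\alpha_k+\alpha$ a compact root, so $0\le y_k+\alpha(x)<0$. With at least two painted vertices, walk from $\alpha_k$ through compact vertices to another painted vertex. Then $\alpha_{i_2}+\dots+\alpha_{i_\ell}$ is non-compact while $\alpha_k+\alpha_{i_2}+\dots+\alpha_{i_\ell}$ is compact, which gives the same contradiction. Hence $x=0$.

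\textbf{Non-inner case.} Your claim that the imaginary parts of the roots in $\RR_2$ span $\i\t^*$ is false in general. Take $A_3$ with $\alpha_1\leftrightarrow\alpha_3$ (i.e.\ $\sl(4,\R)$ or $\sl(2,\mathbb H)$) and the admissible choice $\Delta_0=\{\alpha_1\}$; this is allowed since $\alpha_1,\alpha_3$ are not adjacent. Then $\RR_2=\{\alpha_1+\alpha_2,\alpha_2+\alpha_3\}$, so $V=\R(\alpha_1+2\alpha_2+\alpha_3)$ is a line in the plane $\i\t^*=\R\{\alpha_2,\alpha_1+\alpha_3\}$. Likewise, for $D_r$ with $\Delta_0=\{\alpha_{r-1}\}$ one gets $V=\spann\{\alpha_1,\dots,\alpha_{r-3},2\alpha_{r-2}+\alpha_{r-1}+\alpha_r\}$, which has codimension one in $\i\t^*$.

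In these cases you must choose the $\lambda_\alpha$ on imaginary roots so that $v$ itself lands in $V$, and this needs compactness information. The imaginary roots whose images in $\i\t^*/V$ are opposite have equal $s_\alpha$ \cite[Proposition 6.104]{knapp}, so equal $\lambda$'s make their contributions cancel modulo $V$. Examples are $\alpha_2$ and $\alpha_1+\alpha_2+\alpha_3$ in $A_3$, and $\alpha_i+\dots+\alpha_{r-2}$ and $\alpha_i+\dots+\alpha_r$ in $D_r$. The paper does exactly this case by case, for instance with $\lambda_{\alpha_k}=1/(k-1)$ for $A_{2k-1}$.
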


We dedicate the rest of this section to proving Proposition \ref{balanced pairs prop}.  Henceforth, assume that $\V$ is a connected Vogan diagram. Recall that $\V$ is either inner, complex or is listed in Table \ref{mixed diagrams}. 

\begin{lemma}
    If $\V$ is connected but not inner, then $(\V, \Delta_0)$ is balanced for any choice of $\Delta_0$.
\end{lemma}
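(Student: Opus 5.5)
Since $\V$ is connected and not inner, Proposition \ref{characterisation of type prop} shows that $-\sigma$ acts non-trivially on $\Phi$, so complex roots exist. The plan is to solve Equation (\ref{balanced equation}) by using the complex roots in $\RR_2$ as free parameters. These parameters absorb whatever vector the imaginary roots produce.

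\textbf{Step 1: reduce to a linear condition.} Fix $\lambda_\alpha = 1$ for all $\alpha \in \RR_0 \sqcup \Phi^+_{\Im}$. The first two sums in (\ref{balanced equation}) then give a fixed vector
\[
v := \sum_{\alpha \in \Phi^+_{\c}} \alpha - \sum_{\alpha \in \Phi^+_{\nc}} \alpha \in \i \t^*,
\]
which lies in $\i\t^*$ because imaginary roots vanish on $\a$. For each pair $\{\alpha, -\sigma\alpha\} \subseteq \RR_2$, with $\alpha \in \RR_3$, set $\lambda_\alpha = \lambda_{-\sigma\alpha} = L_\alpha$ and $\mu_\alpha = \mu_{-\sigma\alpha} = m_\alpha \in \R$ with $|m_\alpha| < L_\alpha$. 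Then $D_\alpha = D_{-\sigma\alpha} = L_\alpha^2 - m_\alpha^2 > 0$, and the pair contributes
\[
\frac{m_\alpha(\alpha - \sigma\alpha)}{L_\alpha^2 - m_\alpha^2} = \frac{2m_\alpha}{L_\alpha^2 - m_\alpha^2}\,\alpha|_\t
\]
to (\ref{balanced equation}), using Proposition \ref{Cartan subalgebras facts}(ii). The function $m \mapsto m/(L^2 - m^2)$ maps $(-L,L)$ onto $\R$. Hence the complex terms realise exactly the real linear span
\[
W := \R\{\alpha|_\t : \alpha \in \RR_2\},
\]
and $(\V,\Delta_0)$ is balanced as soon as $v \in W$ (equivalently $-v \in W$).

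\textbf{Step 2: dispose of the degenerate cases.} If there are no imaginary roots, then $v = 0$ and there is nothing to prove. This happens, for example, when $\V$ is of complex type; there $\Delta_0 = \D_1$ is allowed, $\RR_2 = \emptyset$, and (\ref{balanced equation}) reads $0 = 0$. So it remains to treat the diagrams of Table \ref{mixed diagrams}, whose Dynkin diagram is connected and contains both imaginary and complex simple roots.

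\textbf{Step 3: show $v \in W$.} I would aim to prove that $\alpha|_\t \in W$ for every imaginary positive root $\alpha$. First, by connectedness of the diagram and Lemma \ref{root chain lemma}, an imaginary root $\alpha$ can be connected to a complex simple root $\beta$ along a path; this yields positive roots $\beta'$ and $\alpha + \beta'$ that are both complex. Then $\alpha|_\t = (\alpha + \beta')|_\t - \beta'|_\t$. Next I would check that $\beta'$ and $\alpha + \beta'$ can be chosen in $\RR_2$, i.e.\ outside $\RR_0^+ \sqcup (-\sigma\RR_0^+)$. For this I would use criterion (\ref{RR1 condition}): a positive root lies in $\RR_1$ iff its support meets $\Delta_1 = \Delta \setminus (\Delta_0 \sqcup -\sigma\Delta_0)$. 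The imaginary simple roots always lie in $\Delta_1$ by Remark \ref{choices remark}(iii). So any complex positive root whose support contains an imaginary simple root lies in $\RR_2$. Taking $\beta'$ to be a path-sum that starts at an imaginary simple root adjacent to the complex part ensures this for both $\beta'$ and $\alpha+\beta'$.

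The main obstacle is this bookkeeping: showing, for every diagram in Table \ref{mixed diagrams}, that suitable $\beta'$ exist with support meeting the imaginary simple roots, while keeping both $\beta'$ and $\alpha + \beta'$ complex. If the uniform argument is awkward, I would fall back on the explicit list in Table \ref{mixed diagrams}. In each of those diagrams the imaginary simple roots form a small fixed set (the middle vertex, or the fixed branch vertices), and a direct check of the roots suffices.
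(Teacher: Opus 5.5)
Your Step 1 is the same reduction the paper uses: with $\mu_\alpha$ real, it suffices to put the imaginary contribution $v$ into $W=\R\{\alpha-\sigma\alpha:\alpha\in\RR_2\}$. The gap is in Step 3. Its target, $\alpha|_\t\in W$ for \emph{every} imaginary positive root $\alpha$, is false for some admissible $\Delta_0$. So neither the uniform path argument nor a fallback direct check of that claim can succeed.

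Take the $A_3$ diagram with $\alpha_1\leftrightarrow\alpha_3$ (this is $\sl(4,\R)$ or $\sl(2,\mathbb H)$) and $\Delta_0=\{\alpha_1\}$. This choice is allowed because $\alpha_1$ and $\alpha_3$ are not adjacent. Then:
\begin{itemize}
\item $\RR_0^+\sqcup(-\sigma\RR_0^+)=\{\alpha_1,\alpha_3\}$;
\item $\RR_2=\{\alpha_1+\alpha_2,\ \alpha_2+\alpha_3\}$;
\item $W=\R(\alpha_1+2\alpha_2+\alpha_3)$, which does not contain the imaginary root $\alpha_2$.
\end{itemize}
Similarly, for the $D_r$ diagram of $\so(p,q)$ with $\Delta_0=\{\alpha_{r-1}\}$, one gets $W=\R\{\alpha_1,\dots,\alpha_{r-3},\,2\alpha_{r-2}+\alpha_{r-1}+\alpha_r\}$, which misses $\alpha_{r-2}$. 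Here your mechanism breaks concretely. The only complex positive $\beta'$ with $\alpha_{r-2}+\beta'$ a root are $\alpha_{r-1}$ and $\alpha_r$. Their supports contain no imaginary simple root, and both lie in $\RR_0^+\sqcup(-\sigma\RR_0^+)$.

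The missing idea is that you must use the signs $s_\alpha$. In these cases $v$ lies in $W$ only because the offending imaginary roots come in pairs of equal compactness.
\begin{itemize}
\item In type $D$: $\alpha_i+\dots+\alpha_{r-2}$ and $\alpha_i+\dots+\alpha_{r-2}+\alpha_{r-1}+\alpha_r$ have the same compactness, by \cite[Proposition 6.104]{knapp} and Corollary \ref{compactness addition rules}. Their signed sum is $\pm\bigl(2(\alpha_i+\dots+\alpha_{r-2})+\alpha_{r-1}+\alpha_r\bigr)$, which lies in $W$.
\item In type $A_{2k-1}$: all imaginary roots have the compactness of $\alpha_k$. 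This is what allows the paper to take $\lambda_{\alpha_k}=1/(k-1)$ and land in $W$.
\end{itemize}
This input cannot be avoided. In the $A_3$ example, suppose $\alpha_2$ and $\alpha_1+\alpha_2+\alpha_3$ had opposite compactness. Then the imaginary contribution would be $\pm\bigl(a\alpha_2-b(\alpha_1+\alpha_2+\alpha_3)\bigr)$ with $a,b>0$, and this never lies in $W$, whatever the $\lambda_\alpha$.

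There is also a secondary error. Step 2 asserts that every diagram in Table \ref{mixed diagrams} contains imaginary simple roots. This is false for $\sl(2k+1,\R)$: it has imaginary roots (e.g.\ $\alpha_k+\alpha_{k+1}$) but no imaginary simple root, so $v\neq 0$ and your support criterion is vacuous. In that case you must argue instead that $\alpha_k$ and $\alpha_{k+1}$ are adjacent and swapped, so neither can lie in $\Delta_0\sqcup(-\sigma\Delta_0)$; hence both lie in $\Delta_1$.
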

\begin{proof}
    If $\V$ is complex, then all roots are complex, so $\Phi^+_{\c} = \Phi^+_{\nc} = \emptyset$. Setting $\lambda_\alpha > 0$ to be arbitrary and $\mu_\alpha = 0$ for $\alpha \in \RR_2$ gives a solution to Equation (\ref{balanced equation}).

    Next, suppose $\V$ is listed in  Table \ref{mixed diagrams}. If we choose $\mu_\alpha$ to be real, we can rewrite Equation (\ref{balanced equation}) as
    $$\sum_{\alpha \in \Phi^+_{\c}} \frac{\alpha}{\lambda_\alpha} - \sum_{ \alpha \in \Phi^+_{\nc}} \frac{\alpha}{\lambda_\alpha} + \sum_{\alpha \in \RR_2} \frac{\mu_{ \alpha } }{ 2D_\alpha} (\alpha - \sigma \alpha) = 0.$$
    Observe that any real number can be written as $\mu_{ \alpha } /  2D_\alpha$  for some choice of $\mu_\alpha, \lambda_\alpha, \lambda_{-\sigma \alpha}$. Thus, it suffices to show that 
    \begin{equation}
        \label{imaginary vector contained in V}
        \sum_{\alpha \in \Phi^+_{\Im}} \frac{s_\alpha \alpha}{ \lambda_\alpha}  \in V, \qquad V = \R\left\{ \alpha - \sigma \alpha : \alpha \in \RR_2\right\}.
    \end{equation}
    for some choice of $\lambda_\alpha$, $\alpha \in \Phi^+_{\Im}$.
    For each Vogan diagram in Table \ref{mixed diagrams}, we give a solution to (\ref{imaginary vector contained in V}).

    First, suppose $\V$ is the following Vogan diagram with $k \geq 1$:
        $$\dynkin[edge length=0.8cm,labels={\alpha_1,\alpha_2,\alpha_k,\alpha_{k+1}, \alpha_{2k - 1}, \alpha_{2k}},
          label directions={above,above,above,above, above, above}, root radius=.1cm, involutions={16;25;34} ]{A}{oo.oo.oo}.$$
        Observe that $$\RR_0^+ \sqcup (- \sigma \RR_0^+) \subseteq  \left\{ \alpha_i + \cdots + \alpha_j  : 1 \leq i \leq j \leq k - 1 \text{ or } k+2 \leq i \leq j \leq 2k\right\}.$$
        Thus, $\alpha_j + \cdots +\alpha_k \in \RR_2$ for $1 \leq j\leq k$, so $\alpha_{k+ 1 - i} + \cdots + \alpha_{k+ i} \in V$ for any $i = 1,\ldots,k$. It follows that $\alpha_{k+ 1 - i} + \alpha_{k+ i} \in V$ for any $i = 1,\ldots, k$. Since the span of $\alpha_{k+ 1 - i} + \alpha_{k+ i}$ contains all imaginary roots, any choice of $\lambda_\alpha > 0$, $\alpha \in \Phi^+_{\Im}$ satisfies (\ref{imaginary vector contained in V}).
        
    Suppose $\V$ is the following Vogan diagram with $k \geq 2$: 
        $$\dynkin[edge length=0.8cm,labels={\alpha_1,\alpha_2,\alpha_{k-1},\alpha_{k}, \alpha_{k+1}, \alpha_{2k-2}, \alpha_{2k-1}},
          label directions={above,above,above,above, above, above, above}, root radius=.1cm, involutions={17;26;35} ]{A}{oo.oto.oo},$$
          where
          \dynkin[edge length=1cm, root radius=.1cm]{A}{t} denotes possibly either \dynkin[edge length=1cm, root radius=.1cm]{A}{*} or \dynkin[edge length=1cm, root radius=.1cm]{A}{o}. Observe that 
          $$\RR_0^+ \sqcup (- \sigma \RR_0^+) \subseteq  \left\{ \alpha_i + \cdots + \alpha_j  : 1 \leq i \leq j \leq k - 1 \text{ or } k+1 \leq i \leq j \leq 2k -1\right\}.$$
          Thus, $\alpha_{k-i} + \cdots + \alpha_k \in \RR_2$ for $0 \leq i \leq k-1$, so  $\alpha_{k-i} + \cdots + \alpha_{k-1} + 2 \alpha_k + \alpha_{k+1} + \cdots + \alpha_{k+i} \in V$. Thus, 
          $$\alpha_{k-1} + 2 \alpha_k + \alpha_{k+1} \in V, \qquad \alpha_{k-i} + \alpha_{k+i} \in V \text{ for $i \geq 2$}.$$
          The positive imaginary roots are precisely $\alpha_{k-i} + \cdots + \alpha_{k-1} + 2 \alpha_k + \alpha_{k+1} + \cdots + \alpha_{k+i}$ for $0 \leq i \leq k-1$. By \cite[Proposition 6.104]{knapp}, $s_\alpha = s_{\alpha_k}$ for all imaginary roots $\alpha$. Set $\lambda_{\alpha_k} = 1/ (k-1)$, and set $\lambda_\alpha = 1$ for $\alpha \neq \alpha_k$. Then 
          $$\sum_{\alpha \in \Phi^+_{\Im}} \frac{ \alpha}{ \lambda_\alpha} =   \alpha_1 + 2  \alpha_2 +  \cdots + (k-1) \alpha_{k-1} + 2(k-1) \alpha_k + (k-1) \alpha_{k+1} + \cdots  +  2 \alpha_{2k- 2}+\alpha_{2k-1} \in V. $$

    Suppose $\V$ is the following Vogan diagram with $r \geq 4$: 
    $$\dynkin[edge length=1cm, labels={\alpha_1,\alpha_2,\alpha_{r-2},\alpha_{r-1}, \alpha_{r}}, root radius=.1cm, involutions={*[relative, out=0, in=180]45}]{D}{tt.too}$$
    where
    \dynkin[edge length=1cm, root radius=.1cm]{A}{t} denotes possibly either \dynkin[edge length=1cm, root radius=.1cm]{A}{*} or \dynkin[edge length=1cm, root radius=.1cm]{A}{o}. Observe that $\RR_0^+ \sqcup (- \sigma \RR_0^+) \subseteq \{\alpha_r, \alpha_{r-1}\}$.  Thus, $\alpha_i + \cdots + \alpha_{r-1} \in \RR_2$ for all $1 \leq i \leq r-2$, so $2(\alpha_i + \cdots + \alpha_{r-2}) + \alpha_{r-1} + \alpha_r \in V$. It follows that 
    $$2 \alpha_{r-2} + \alpha_{r-1} + \alpha_r \in V, \qquad \alpha_i \in V \text{ for } 1 \leq i \leq r-3.$$
    Let $s_i := s_{\alpha_i}$. By \cite[Proposition 6.104]{knapp} and Corollary \ref{compactness addition rules}, the positive imaginary roots and their compactness are given by the following table:
    $$\begin{tabular}{c|c|c}
             $\alpha$ &   $s_\alpha $ & Conditions\\\hline
             $\alpha_i + \cdots + \alpha_j$ & $s_i s_{i+1} \cdots s_j$ & $1 \leq i \leq j \leq r-2$ \\
             $\alpha_i + \cdots + \alpha_{r-2} + \alpha_{r-1} + \alpha_r$ & $s_i s_{i+1} \cdots s_{r-2}$ & $1 \leq i \leq r-2$ \\
             $\alpha_i + \cdots + \alpha_{j-1} + 2(\alpha_j + \cdots + \alpha_{r-2}) + \alpha_{r-1} + \alpha_r$ & $s_i s_{i+1} \cdots s_{j-1}$ & $1 \leq i < j\leq r-2$ \\
    \end{tabular}$$
    Set $\lambda_\alpha = 1$ for all imaginary positive $\alpha$. If $1 \leq i \leq j \leq r-3$, then  $\alpha_i + \cdots + \alpha_j \in V$. Moreover, $\alpha_i + \cdots + \alpha_{j-1} + 2(\alpha_j + \cdots + \alpha_{r-2}) + \alpha_{r-1} + \alpha_r \in V$ for $1 \leq i < j\leq r-2$. Finally, $\alpha_i + \cdots + \alpha_{r-2}$ and $\alpha_i + \cdots + \alpha_{r-2} + \alpha_{r-1} + \alpha_r$ have the same compactness, and their sum belongs to $V$. Thus, $\sum_{\alpha \in \Phi^+_{\Im}} s_\alpha \alpha \in V$.

    Finally, suppose $\V$ is the following Vogan diagram:
    $$\dynkin[edge length=1cm, labels={\alpha_1,\alpha_2,\alpha_3,\alpha_4, \alpha_5, \alpha_6}, label directions={above,right,above,above right, above, above}, root radius=.1cm, involutions={16;35}]{E}{ototoo}$$
    where
    \dynkin[edge length=1cm, root radius=.1cm]{A}{t} denotes possibly either \dynkin[edge length=1cm, root radius=.1cm]{A}{*} or \dynkin[edge length=1cm, root radius=.1cm]{A}{o}. Observe that 
    $$\RR_0^+ \sqcup (- \sigma \RR_0^+) \subseteq \{\alpha_1, \alpha_3, \alpha_1 +\alpha_3, \alpha_5, \alpha_6, \alpha_5 + \alpha_6\}.$$
    Thus, the following roots belong to $\RR_2$:
    $$\alpha_3 + \alpha_4, \quad \alpha_1 + \alpha_3 + \alpha_4, \quad \alpha_1 + \alpha_3 + \alpha_4 + \alpha_2, \quad \alpha_1 + 2 \alpha_3 + 2 \alpha_4 + \alpha_5 + \alpha_2.$$
    It follows that the following vectors belong to $V$:
    $$\alpha_3 + 2 \alpha_4 + \alpha_5, \quad \alpha_1 + \alpha_3 + 2 \alpha_4 + \alpha_5 + \alpha_6, \quad \alpha_1 + \alpha_3 + 2 \alpha_4 + \alpha_5 + \alpha_6 + 2 \alpha_2,$$
    $$\alpha_1 + 3 \alpha_3 + 4 \alpha_4 + 3 \alpha_5 + \alpha_6 + 2 \alpha_2.$$
    It follows that $\alpha_1 + \alpha_6, \alpha_3 + \alpha_5, \alpha_4, \alpha_2 \in V$. Since the span of these vectors contains all imaginary roots, any choice of $\lambda_\alpha > 0$, $\alpha \in \Phi^+_{\Im}$ satisfies (\ref{imaginary vector contained in V}). 
\end{proof}

It remains to consider when $\V$ is inner, i.e. the involution of $\V$ is trivial. Henceforth, let us assume that this is the case. Note that the only possible choice of $\Delta_0$ is $\Delta_0 = \emptyset$. Let $\alpha_1,\ldots, \alpha_r$ denote simple roots of $\Phi^+$, which we identify with the vertices of $\V$. For each $\alpha \in \Phi^+$, let us write 
$$\alpha = \sum_{i=1}^r c_i(\alpha) \alpha_i.$$
Let $A$ be the matrix whose rows are indexed by $\alpha \in \Phi^+$, and whose $i$th row is $$s_\alpha(c_1(\alpha), \ldots, c_r(\alpha)),$$
where $s_\alpha = 1$ if $\alpha$ is compact, and $s_\alpha = -1$ if $\alpha$ is non-compact.

\begin{lemma}
    \label{stiemke lemma}
    Suppose $\V$ is connected and inner. Then $(\V, \emptyset)$ is balanced if and only if there does not exist $y \in \R^r$ with $Ay \geq 0$ and $Ay \neq 0$.
\end{lemma}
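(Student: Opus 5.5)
In the inner case all roots are imaginary and $\Delta_0=\emptyset$, so $\RR_2=\emptyset$ and Equation (\ref{balanced equation}) reduces to
$$\sum_{\alpha\in\Phi^+}\frac{s_\alpha\,\alpha}{\lambda_\alpha}=0, \qquad \lambda_\alpha>0 .$$
The first step is to express this in the simple-root basis $\alpha_1,\ldots,\alpha_r$ of $(\h^\C)^*$, which is linearly independent. Put $x_\alpha:=1/\lambda_\alpha$. The equation then becomes the linear system $A^{T}x=0$, where $x\in\R^{\Phi^+}$ has all entries strictly positive. Here the $\alpha$-th row of $A$ is $s_\alpha(c_1(\alpha),\ldots,c_r(\alpha))$, so the $i$-th component of $A^Tx$ is $\sum_\alpha x_\alpha s_\alpha c_i(\alpha)$. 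Since every positive vector $x$ arises from some choice of the $\lambda_\alpha$, the statement that $(\V,\emptyset)$ is balanced is equivalent to the existence of $x>0$ (entrywise) with $A^Tx=0$.

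The second step is to invoke Stiemke's theorem of the alternative. It says that for a real matrix $A$, exactly one of the following holds: either there is $x>0$ with $A^Tx=0$, or there is $y$ with $Ay\ge 0$ and $Ay\ne 0$. Combining this with the reformulation above gives the lemma directly.

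For completeness, I would sketch both directions of Stiemke's theorem. The two alternatives cannot hold simultaneously: if $x>0$, $A^Tx=0$, $Ay\ge0$ and $Ay\neq0$, then $0=x^TAy=\sum_\alpha x_\alpha (Ay)_\alpha>0$, a contradiction. For the other direction, suppose no $x>0$ satisfies $A^Tx=0$. Then the subspace $\ker A^T$ meets the open positive orthant trivially. By Gordan-type separation (a hyperplane separating a subspace from the open orthant, or equivalently Farkas' lemma applied to the system $A^Tx=0$, $x\ge \mathbf 1$ after scaling), there is a vector $w\ge0$, $w\ne0$, lying in $(\ker A^T)^\perp=\operatorname{im}A$. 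Writing $w=Ay$ gives the required $y$.

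The main point needing care is this separation step: the orthant is open, so one first passes to the closed set $\{x\ge \mathbf 1\}$ by scaling and then applies Farkas. That argument is standard, so I expect no real obstacle, only bookkeeping of how $A$ is indexed versus its transpose.
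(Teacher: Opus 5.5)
Your proposal is correct and matches the paper's proof. You reduce the balanced equation in the inner case, where $\RR_2=\emptyset$, to the existence of $x>0$ with $A^{\top}x=0$ by setting $x_\alpha=1/\lambda_\alpha$, and then apply Stiemke's theorem; your sketch of Stiemke's theorem via Farkas' lemma is a valid addition where the paper simply cites it.
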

Here, $Ay \geq 0$ means that each entry of $Ay$ is non-negative.
\begin{proof}
    First, from Equation (\ref{balanced equation}), observe that $(\V, \emptyset)$ is balanced if and only if there exist $x_\alpha > 0$ for $\alpha \in \Phi^+$ such that 
    $$\sum_{\alpha \in \Phi^+} x_\alpha s_\alpha \alpha = 0.$$
    Thus, $(\V, \emptyset)$ is balanced if and only if there exists $x \in \R^{|\Phi^+|}$ with $x > 0$ such that $A^\top x = 0$. The result follows from Stiemke's theorem \cite[Chapter 2.4, Theorem 7]{mangasarian}.
\end{proof}

\begin{remark}
    \label{Ay expansion}
    Suppose $y \in \R^r$ and $Ay \geq 0$. The $\alpha$th entry of $Ay$ is $s_\alpha \sum c_i(\alpha) y_i$. Thus, if $\alpha$ is compact, then $\sum c_i(\alpha) y_i \geq 0$, and if $\alpha$ is non-compact, then $\sum c_i(\alpha) y_i \leq 0$. In particular, if $\alpha_k$ is compact then $y_k \geq 0$, and  if $\alpha_k$ is non-compact then  $y_k \leq 0$.
\end{remark}

\begin{lemma}
    \label{yk lemma}
    Suppose $\V$ has at least one painted vertex. Suppose $y \in \R^r$ satisfies $Ay \geq 0$ and $Ay \neq 0$. Then there exists $k = 1,\ldots, r$ such that $y_k < 0$.
\end{lemma}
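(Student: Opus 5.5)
We argue by contradiction: suppose $Ay \geq 0$, $Ay \neq 0$, and $y_k \geq 0$ for every $k$. By Remark \ref{Ay expansion}, every non-compact simple root $\alpha_k$ then satisfies $0 \leq y_k \leq 0$, so $y_k = 0$ for all painted vertices. The plan is to propagate this vanishing along the Dynkin diagram. Since $\V$ is connected and inner with at least one painted vertex, every compact simple root $\alpha_j$ can be joined by a path $\alpha_{i_1}, \ldots, \alpha_{i_\ell} = \alpha_j$ to a painted vertex $\alpha_{i_1}$, where all the intermediate vertices $\alpha_{i_2}, \ldots, \alpha_{i_\ell}$ are compact. (Take a shortest path from $\alpha_j$ to the set of painted vertices.)

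Consider the root $\beta = \alpha_{i_1} + \cdots + \alpha_{i_\ell}$, which is a root by Lemma \ref{root chain lemma}. By Corollary \ref{compactness addition rules}, it is non-compact, since it is one non-compact root plus compact ones. By Remark \ref{Ay expansion}, $\sum_t y_{i_t} \leq 0$. Every term is non-negative by assumption, so each $y_{i_t} = 0$; in particular $y_j = 0$. Doing this for every compact $j$ gives $y = 0$, hence $Ay = 0$, contradicting $Ay \neq 0$. So some $y_k < 0$.

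The main point to check is the path construction. Connectedness of the underlying Dynkin diagram holds because $\V$ is connected and inner, so the involution is trivial and the diagram is not of the two-component complex kind. The shortest-path choice then guarantees that only the first vertex is painted. The compactness bookkeeping is simply iterated application of Corollary \ref{compactness addition rules} to the partial sums, which are roots by Lemma \ref{root chain lemma}.
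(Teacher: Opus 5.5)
Your proof is correct and uses the same ingredients as the paper's: a path of compact simple roots ending at a painted vertex, Lemma \ref{root chain lemma} and Corollary \ref{compactness addition rules} to see that the sum along the path is a non-compact root, and the sign constraints of Remark \ref{Ay expansion}. The only difference is packaging. The paper argues directly: it starts from a coordinate $y_{i_1} > 0$, which must sit at a compact vertex, and walks to a painted vertex to force a negative coordinate there. You instead assume all $y_k \geq 0$, run the same path argument from every compact vertex, and conclude $y = 0$.
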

\begin{proof}
        Since $Ay \neq 0$, we know that $y \neq 0$. Therefore, there exists some $i_1 = 1,\ldots, r$ such that $y_{i_1} \neq 0$. If $y_{i_1} < 0$, we are done. Suppose $y_{i_1} > 0$. In particular, $\alpha_{i_1}$ is compact by Remark \ref{Ay expansion}. Choose a path in the Dynkin diagram $\alpha_{i_1}, \alpha_{i_2},\ldots, \alpha_{i_\ell}$ such that $\alpha_{i_\ell}$ is non-compact, but $\alpha_{i_1},\ldots, \alpha_{i_{\ell - 1}}$ are compact. By Corollary \ref{compactness addition rules} and Lemma \ref{root chain lemma}, $\alpha_{i_1} + \cdots + \alpha_{i_{\ell - 1}}$ is a compact positive root and $\alpha_{i_1} + \cdots + \alpha_{i_{\ell }}$ is a non-compact positive root. Thus, Remark \ref{Ay expansion} implies that 
        $$\underbrace{y_{i_1} + \cdots + y_{i_{\ell - 1}}}_{ > 0} + y_{i_\ell} \leq 0,$$
        so $y_{i_\ell} < 0$, as desired.
\end{proof}

We are now ready to finish the proof of Proposition \ref{balanced pairs prop}. Again, assume that $\V$ is connected and inner. 

First, suppose $\V$ has no painted vertices. Then all roots are compact, so $s_\alpha = 1$ for all $\alpha \in \Phi^+$. Thus, $y = (1,\ldots,1)^\top$ satisfies $Ay \geq 0$ and $Ay \neq 0$, so $(\V, \emptyset)$ is not balanced by Lemma \ref{stiemke lemma}.

Next, suppose $\V$ has exactly one painted vertex $\alpha_k$, and $\V$ is listed in Table \ref{vogan table}. By Lemma \ref{single painted vertex lemma}, it follows that $c_k(\alpha) \in \{0,1\}$ for all $\alpha \in \Phi^+$. Since $\alpha_k$ is the only non-compact simple root, $\alpha \in \Phi^+$ is compact if and only if $c_k(\alpha) = 0$. Let $y = (0,\ldots, 0 ,-1,0,\ldots,0)^\top$, where the $-1$ is in the $k$th spot. For each $\alpha \in \Phi^+$, the $\alpha$th entry of $Ay$ is 
    $$s_\alpha \sum_{i= 1}^r c_i(\alpha) y_i = -s_\alpha c_k(\alpha) = \begin{cases}
        0 & \text{if $c_k(\alpha) = 0$,} \\
        1 & \text{if $c_k(\alpha) = 1$.}
    \end{cases}$$
    Thus, $Ay \geq 0$ and $Ay \neq 0$, so $(\V, \emptyset)$ is not balanced.

Now, suppose that $\V$ has exactly one painted vertex $\alpha_k$, but $\V$ is not listed in  Table \ref{vogan table}. For the sake of contradiction, suppose that $(\V, \emptyset)$ is not balanced. By Lemma \ref{stiemke lemma}, there exists $y \in \R^r$ such that $Ay \geq 0$ and $Ay \neq 0$. By Lemma \ref{single painted vertex lemma}, there exists a non-compact positive root $\alpha$ such that $\alpha_k + \alpha$ is a root. Since $\alpha_k$ is the only painted vertex, Lemma \ref{yk lemma} and Remark \ref{Ay expansion} imply that $y_k < 0$. Since $\alpha_k + \alpha$ is compact, Remark \ref{Ay expansion} implies that $$0 \leq \sum_{i=1}^r c_i(\alpha_k + \alpha) y_i =  \underbrace{y_k}_{ < 0} + \underbrace{\sum_{i=1}^r c_i(  \alpha) y_i}_{\leq 0} < 0,  $$
    a contradiction.

Finally, suppose $\V$ has at least $2$ painted vertices. For the sake of contradiction, suppose that $(\V, \emptyset)$ is not balanced. Then there exists $y \in \R^r$ such that $Ay \geq 0$ and $Ay \neq 0$. By Lemma \ref{yk lemma}, there exists $i_1 = 1,\ldots, r$ such that $y_{i_1} < 0$; in particular, $\alpha_{i_1}$ is non-compact. Choose a path in the Dynkin diagram $\alpha_{i_1}, \alpha_{i_2},\ldots, \alpha_{i_\ell}$ such that $\alpha_{i_\ell}$ is also non-compact, but $\alpha_{i_2},\ldots, \alpha_{i_{\ell -1}}$ are compact.
    By Corollary \ref{compactness addition rules} and Lemma \ref{root chain lemma},  $\alpha_{i_2} + \cdots + \alpha_{i_{\ell}}$ is a non-compact positive root, and $\alpha_{i_1} + \cdots + \alpha_{i_{\ell}}$ is a compact positive root. 
Thus, Remark \ref{Ay expansion} implies that 
$$0 \leq \underbrace{y_{i_1} }_{ <  0} + \underbrace{y_{i_2} + \cdots + y_{i_\ell}}_{ \leq 0} < 0,$$
a contradiction.

\section{The pluriclosed condition}
\label{pluriclosed section}
In this section, let $G$ be a real semisimple Lie group with Lie algebra $\g$. Let $\h$ be a maximally compact Cartan subalgebra of $\g$, and let $\Phi$ be the roots of $(\g^\C, \h^\C)$. Let $J = J(\Phi^+, \Delta_0, \l)$ be a regular complex structure on $G$ (see Proposition \ref{regular structure prop}). Let $\t \leq \h$ be the subspace defined in (\ref{t and a definitions}), and let $T$ denote the connected subgroup of $G$ with Lie algebra $\t$.

Let $g$ denote a left-invariant right $T$-invariant Hermitian metric on $(G,J)$, and let $\omega := g(J \cdot,\cdot)$ be the corresponding positive $(1,1)$-form. Let $d^c$ be the operator on forms given by $d^c := J \circ d \circ J$, where $J$ acts on forms via pullback. In particular, $d^c \omega = d \omega(J \cdot, J \cdot, J \cdot)$. We have $dd^c \omega = -2 \i \partial \overline \partial \omega$, so $\omega$ is pluriclosed if and only if $dd^c \omega = 0$.
The following proves Part (a) of Theorem \ref{pluriclosed technical theorem}:
\begin{proposition}
    \label{not inner pluriclosed}
    Suppose $\g$ is not inner. Then $(G,J)$ admits no left-invariant pluriclosed metrics.
\end{proposition}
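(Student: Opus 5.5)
By Proposition \ref{symmetrizing to right T invariant} and Lemma \ref{AdGT is compact}, it suffices to show that no left-invariant right $T$-invariant Hermitian metric $g$ on $(G,J)$ satisfies $dd^c\omega = 0$. Such an $\omega$ has the form given in Lemma \ref{right T invariant Hermitian metrics form}. The plan is to evaluate $dd^c\omega$ (equivalently $\partial\overline\partial\omega$) on a carefully chosen quadruple of vectors and show that the result is a nonzero sum of terms of one sign.

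Since $\g$ is not inner, some simple factor is either complex or listed in Table \ref{mixed diagrams}. In both cases there exist complex roots, and I would work inside $\RR_2 \cup \RR_0$. The natural test is to pick a complex root $\alpha \in \RR$ and evaluate
$$\partial\overline\partial\omega\bigl(E_\alpha, H, \sigma E_\alpha, \sigma H\bigr)$$
for suitable $H \in \l$, or the analogue with $H$ replaced by the $(1,0)$-projection of an element of $\a$. The point is that $\sigma\alpha \ne -\alpha$, so $\alpha$ has a nonzero real part $\alpha|_\a$. This produces a term $|\alpha(H)|^2\lambda_\alpha$ coming from $[H,E_\alpha]$, whereas in the inner case $\alpha|_\a = 0$ for every root and the analogous terms can cancel against bracket terms.

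Concretely, I would use the formula $d\eta(X,Y,Z) = -\sum_{\mathrm{cyc}} \eta([X,Y],Z)$ twice to compute $dd^c\omega$ on $(X,\sigma X, Y,\sigma Y)$ with $X = E_\alpha$ and $Y \in \l$. All the terms come from $[\h^\C,\g_\alpha]$ and $[\g_\alpha,\g_{\sigma\alpha}] = 0$; the latter holds because $\alpha+\sigma\alpha$ is real, as used in the proof of Proposition \ref{balanced equation prop}. The terms involving $\omega|_{\h\times\h}$ vanish because $\h$ is abelian, and $\omega(\h^\C,\g_\beta) = 0$ by Proposition \ref{right T invariant metrics form}. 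I therefore expect the computation to collapse to something like
$$c\,\lambda_\alpha\bigl|\alpha(Y) - \overline{(\sigma\alpha)(Y)}\bigr|^2,$$
possibly together with $\mu$-terms. One has to choose $Y \in \l$ so that this quantity is nonzero. This is possible because $\alpha - \sigma\alpha$ restricted to $\l$ cannot vanish identically: otherwise $\alpha-\sigma\alpha$ would vanish on $\l + \sigma\l = \h^\C$, forcing $\alpha = \sigma\alpha$, which contradicts the absence of real roots.

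The main obstacle is controlling the cross terms coming from $\mu_\alpha$, which pair $\g_\alpha$ with $\g_{-\alpha}$, and the case $\alpha \in \RR_0$, where $-\alpha \in \RR$ as well. If these cross terms do not cancel, I would instead sum the test over the $-\sigma$-orbit $\{\alpha, -\sigma\alpha\}$, or choose $\alpha$ to be a complex root that is extremal (say of maximal height among complex roots in $\RR_2$), so that the extra brackets $[E_\alpha, E_\beta]$ for other roots $\beta$ vanish and the only surviving contribution is the positive term above. If these approaches fail, a fallback is to take the trace of $dd^c\omega$ against $\omega$ over the span of $\h$ and the complex root spaces, and show that the resulting sum is strictly positive as a sum of squares weighted by $\lambda$ and $D_\alpha$.
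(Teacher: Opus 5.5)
Your test is the paper's: evaluate $dd^c\omega$ on $(Y,\sigma Y,E_\alpha,\sigma E_\alpha)$ with $\alpha\in\RR$ a complex root and $Y\in\l$. The problem is that you misidentify the quantity that comes out, and therefore also the reason it is nonzero.

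Because $[Y,\sigma Y]=0$ and $[E_\alpha,\sigma E_\alpha]=0$, every term involves only the eigenvalues of $\ad Y$ and $\ad\sigma Y$ on $E_\alpha$ and $\sigma E_\alpha$. For any $H'\in\h^\C$ one gets $d\omega(H',E_\alpha,\sigma E_\alpha)=-(\alpha+\sigma\alpha)(H')\,\omega(E_\alpha,\sigma E_\alpha)$. Using $(\alpha+\sigma\alpha)(\sigma Y)=\overline{(\alpha+\sigma\alpha)(Y)}$, this gives
\[
dd^c\omega(Y,\sigma Y,E_\alpha,\sigma E_\alpha)=2\,\bigl|(\alpha+\sigma\alpha)(Y)\bigr|^2\,g(E_\alpha,\sigma E_\alpha).
\]
So the relevant functional is the real part $\alpha+\sigma\alpha$, not $\alpha-\sigma\alpha$. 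Your displayed expression $\alpha(Y)-\overline{(\sigma\alpha)(Y)}=\alpha(Y-\sigma Y)$ is neither of these.

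Your non-vanishing argument ends in ``$\alpha=\sigma\alpha$, contradicting the absence of real roots.'' That uses only maximal compactness and would apply verbatim to imaginary roots, so it never uses the hypothesis that $\g$ is not inner. As written, it proves non-vanishing of the wrong quantity. The correct step is as follows. The functional $\alpha+\sigma\alpha$ is $\sigma$-fixed, so if it vanished on $\l$ it would vanish on $\sigma\l$, hence on $\l\oplus\sigma\l=\h^\C$. That gives $\sigma\alpha=-\alpha$, contradicting that $\alpha$ is complex. This is where the assumption that $\alpha$ is complex, not merely non-real, does its work.

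The ``main obstacle'' you anticipate does not arise. The only value of $\omega$ that enters is $\omega(E_\alpha,\sigma E_\alpha)$; nothing is ever evaluated on $\g^\C_\alpha\times\g^\C_{-\alpha}$, so no $\mu$-terms appear. No brackets of $E_\alpha$ with other root vectors occur, so no extremal choice of $\alpha$ is needed. The case $\alpha\in\RR_0$ is harmless, since only $E_\alpha\in\q$ is used. In fact the computation needs neither right $T$-invariance nor Lemma \ref{right T invariant Hermitian metrics form}. With the sign corrected, your first test already completes the proof, and the fallbacks (summing over orbits, extremal roots, tracing against $\omega$) are unnecessary.
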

\begin{proof}
    Thanks to symmetrisation (Proposition \ref{symmetrizing to right T invariant}), it suffices to show that there are no left-invariant right $T$-invariant pluriclosed metrics on $(G,J)$. For the sake of contradiction, suppose $g$ is such a metric,  and $\omega$ is the corresponding positive $(1,1)$-form. Since $\g$ is not inner, there exists a complex root  $\alpha \in \Phi^+ \cap \RR$ (Proposition \ref{characterisation of type prop}). Fix a non-zero root vector $E_\alpha \in \g^\C_\alpha$, and let $H \in \l$. Then
    \begin{align*}
        dd^c \omega( H, \sigma H, E_\alpha, \sigma E_\alpha) &= -d^c \omega([H, \sigma H], E_\alpha, \sigma E_\alpha)  + d^c \omega([H, E_\alpha], \sigma H, \sigma E_\alpha) \\
        & \quad  - d^c \omega([H, \sigma E_\alpha], \sigma H, E_\alpha) -d^c\omega( [ \sigma H, E_\alpha], H, \sigma E_\alpha) \\   &\quad + d^c \omega([ \sigma H, \sigma E_\alpha], H, E_\alpha) - d^c \omega([E_\alpha, \sigma E_\alpha], H, \sigma H)  \\
        &=  - (\alpha + \sigma \alpha)(H) d^c \omega( \sigma H, E_\alpha, \sigma E_\alpha) \\
        &\quad + (\alpha + \sigma \alpha)(\sigma H) d^c \omega(  H, E_\alpha, \sigma E_\alpha) \\
        &= \i (\alpha + \sigma \alpha)(H) d \omega (\sigma H, E_\alpha, \sigma E_\alpha) \\
        & \quad + \i (\alpha + \sigma \alpha)(\sigma H) d \omega (H, E_\alpha, \sigma E_\alpha).
    \end{align*}
    Note that $[E_\alpha, \sigma E_\alpha] =0$, because $\alpha + \sigma \alpha$ is non-zero and not a root, since $\h$ is maximally compact.
    Now, given $H' \in \h^\C$, we find 
    \begin{align*}
        (d \omega)(H', E_\alpha, \sigma E_\alpha) &= - \omega([H', E_\alpha], \sigma E_\alpha) + \omega([H', \sigma E_\alpha], E_\alpha)-\omega ([E_\alpha, \sigma E_\alpha], H') \\
        &= -(\alpha + \sigma \alpha)(H') \omega(E_\alpha, \sigma E_\alpha) \\
        &= - \i (\alpha + \sigma \alpha)(H') g(E_\alpha, \sigma E_\alpha).
    \end{align*}
    Therefore, 
    \begin{align*}
        dd^c \omega ( H, \sigma H, E_\alpha, \sigma E_\alpha) &= 2 (\alpha + \sigma \alpha)(H) (\alpha + \sigma \alpha)( \sigma H) g(E_\alpha, \sigma E_\alpha) \\
        &= 2 | (\alpha + \sigma \alpha)(H)|^2 g(E_\alpha, \sigma E_\alpha),
    \end{align*}
    where the last equality holds because $(\alpha + \sigma \alpha)( \sigma H) = \overline{(\alpha + \sigma \alpha)(  H)}$. Since $\alpha + \sigma \alpha$ is non-zero, there exists $H \in \l$ such that $(\alpha + \sigma \alpha)(H) \neq 0$. In particular, the computation above shows that $ dd^c \omega ( H, \sigma H, E_\alpha, \sigma E_\alpha) \neq 0$, so $\omega$ is not pluriclosed.
\end{proof}

It remains to consider the inner case, so assume from now on that $g$ is inner. Let $g$ be a left-invariant right $T$-invariant Hermitian metric on $(G,J)$, and let $\omega = g(J \cdot, \cdot)$ be the corresponding positive $(1,1)$-form. Let $E_\alpha \in \g^\C_\alpha$ be root vectors chosen as in Lemma \ref{sigma adapted root vectors lemma}, and let $y_\alpha := \omega(E_\alpha, E_{-\alpha})$ for all $\alpha \in \Phi$.
 
Decompose $\g$ into its simple factors, $\g = \g_1 \oplus \cdots \oplus \g_s$, let $B_i$ denote the Killing form of $\g_i$ and let $\Phi = \Phi_1 \sqcup \cdots \sqcup \Phi_s$ be the corresponding decomposition of the root system. 
 Let $\alpha_1,\ldots, \alpha_r$ denote the simple roots of $\Phi^+$. For each $\alpha \in \Phi$, let $c_k(\alpha)$ be the integers given by $\alpha = \sum_{k=1}^r c_k(\alpha) \alpha_k$. Let $\V$ denote the Vogan diagram induced by $\Phi^+$. Let $\theta$ be the unique Cartan involution of $\g$ with $\theta \h = \h$, and set $\theta_i := \theta|_{\g_i} : \g_i \rightarrow \g_i$.

With a slight modification to the proof, \cite[Theorem 3.7]{lauret} (the compact case) generalises to the following result:
\begin{proposition}
    \label{pluriclosed metrics description}
    Suppose $\g$ is inner, and $g$ is left-invariant and right $T$-invariant. The following are equivalent:
    \begin{enumerate}[\normalfont (a)]
        \item $g$ is pluriclosed.
        \item There exist positive real numbers $\kappa_1,\ldots, \kappa_s$ such that $g|_{\h \times \h} =  -(\sum_{i=1}^s \kappa_i B_i)|_{\h \times \h}$, and for any $\alpha, \beta \in \Phi^+_i$ such that $\alpha + \beta$ is a root, we have
        \begin{equation}
        \label{pluriclosed roots equation 2}
            \i y_{\alpha +\beta} =  \i y_\alpha + \i y_\beta - \kappa_i.
        \end{equation}
    \end{enumerate}
\end{proposition}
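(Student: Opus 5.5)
The plan is to compute $dd^c\omega$ directly on quadruples of basis vectors of $\g^\C$ adapted to the root decomposition, and to show that its vanishing is equivalent to the two conditions in (b). Since $\g$ is inner, all roots are imaginary, $\h=\t$, $\Delta_0=\emptyset$, and $\q=\l\oplus\bigoplus_{\alpha\in\Phi^+}\g^\C_\alpha$. By Lemma \ref{right T invariant Hermitian metrics form} there are no $\mu$-terms, so
$$\omega=\omega|_{\h\times\h}+\sum_{\alpha\in\Phi^+} y_\alpha\, \varepsilon^\alpha\wedge\varepsilon^{-\alpha},$$
where $\i y_\alpha$ is, up to the sign $s_\alpha$, the positive number $\lambda_\alpha$, and $y_{-\alpha}=-y_\alpha$. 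Because $\omega$ is $\ad(\h)$-invariant, $d\omega$ and $dd^c\omega$ are $\ad(\h^\C)$-invariant. Hence $dd^c\omega(X_1,\dots,X_4)$ with $X_j$ of weight $w_j\in\Phi\cup\{0\}$ vanishes unless $\sum w_j=0$. This leaves four types of quadruples to test:
\begin{itemize}
\item[(1)] $(H,H',H'',H''')$;
\item[(2)] $(H,H',E_\alpha,E_{-\alpha})$;
\item[(3)] $(H,E_\alpha,E_\beta,E_{-\alpha-\beta})$;
\item[(4)] $(E_\alpha,E_{-\alpha},E_\beta,E_{-\beta})$, together with general quadruples of root vectors summing to zero.
\end{itemize}

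Type (1) vanishes because $\h$ is abelian. For type (2), I will follow the computation in Proposition \ref{not inner pluriclosed}, but now $\alpha+\sigma\alpha=0$, so the terms change. One finds $d\omega(H,E_\alpha,E_{-\alpha})=-\omega([E_\alpha,E_{-\alpha}],H)=-\omega(H_\alpha,H)$. Expanding $dd^c\omega(H,H',E_\alpha,E_{-\alpha})$ then gives an expression of the form $\alpha(H)\,\omega(H_\alpha,JH')-\alpha(H')\,\omega(H_\alpha,JH)$, up to $J$-twists. This is essentially $g(H_\alpha,\cdot)\wedge\alpha$ restricted to $\h$. Its vanishing for all $H,H'$ says $g(H_\alpha,\cdot)$ is proportional to $\alpha$ on $\h$, i.e. $g(H_\alpha,H)=-\kappa_\alpha B(H_\alpha,H)$ for every root. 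Connectedness of each $\Phi_i$ (non-orthogonal roots must share the constant) forces $\kappa_\alpha=\kappa_i$ on $\Phi_i$. This gives $g|_{\h\times\h}=-\sum\kappa_iB_i|_{\h\times\h}$, with $\kappa_i>0$ by positivity of $g$.

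For types (3) and (4), I will use $d^c\omega=J^*d\omega$, with $J=\i$ on $\q$ and $-\i$ on $\sigma\q$. Evaluating $d\omega$ on root vectors through $[E_\alpha,E_\beta]=N_{\alpha,\beta}E_{\alpha+\beta}$ gives a quantity proportional to $N_{\alpha,\beta}(y_{\alpha+\beta}-y_\alpha-y_\beta)$ in the three-root case. Type (3) vanishes automatically or reproduces the same condition. The decisive computation is type (4) with $\alpha,\beta\in\Phi^+$. Here the $\h$-component of $[E_\alpha,E_{-\alpha}]=H_\alpha$ contributes $\omega(H_\alpha,H_\beta)$-type terms, now equal to $\kappa_i\langle\alpha,\beta\rangle$ by type (2). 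The root-vector terms contribute $N_{\alpha,\beta}^2$ and $N_{\alpha,-\beta}^2$ times combinations of $y$'s. The identity to aim for is
$$\kappa_i\langle\alpha,\beta\rangle+N_{\alpha,\beta}^2(\ldots)+N_{\alpha,-\beta}^2(\ldots)=0,$$
and I would reduce it using the standard relation $N_{\alpha,\beta}^2-N_{\alpha,-\beta}^2\propto\langle\alpha,\beta\rangle$ on strings. The signs coming from $\sigma E_\alpha=-s_\alpha E_{-\alpha}$ cancel in these expressions because everything is written in terms of $y_\alpha=\omega(E_\alpha,E_{-\alpha})$.

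The main obstacle is the root-string bookkeeping in type (4), and showing that the resulting family of equations is equivalent to \eqref{pluriclosed roots equation 2} for all $\alpha,\beta\in\Phi^+_i$ with $\alpha+\beta\in\Phi$. For this I would mirror the proof of \cite[Theorem 3.7]{lauret} step by step. The only changes are that the normalisation of Lemma \ref{sigma adapted root vectors lemma} introduces the signs $s_\alpha$, and that $y_\alpha$ is no longer required to have a fixed sign; the algebraic identities there never use compactness. For the converse (b)$\Rightarrow$(a), I substitute \eqref{pluriclosed roots equation 2} and the form of $g|_{\h}$ back into the expressions for all four types and verify that they vanish. Quadruples involving $\l$ versus $\sigma\l$ are handled by the same formulas after complexifying.
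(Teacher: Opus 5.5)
Your overall route is the same as the paper's: evaluate $dd^c\omega$ on weight-zero quadruples and adapt \cite[Theorem 3.7]{lauret}, tracking the signs $s_\alpha$. The paper itself says nothing beyond the remark that the compact-case proof carries over. However, the one step you make explicit fails, and you then feed it into type (4).

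\textbf{Type (2) vanishes identically in the inner case.} For any $3$-form $\phi$, the terms of $d\phi(H,H',E_\alpha,E_{-\alpha})$ coming from $[H,E_{\alpha}]$ and $[H,E_{-\alpha}]$ add up to
\[
\alpha(H)\bigl(\phi(E_\alpha,H',E_{-\alpha})+\phi(E_{-\alpha},H',E_\alpha)\bigr)=0
\]
by skew-symmetry, and likewise for $H'$. Hence $d\phi(H,H',E_\alpha,E_{-\alpha})=-\phi(H_\alpha,H,H')$. The terms that survived in Proposition~\ref{not inner pluriclosed} carried the factor $(\alpha+\sigma\alpha)(H)$, which is now $0$. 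With $\phi=d^c\omega$ and $J\h^\C=\h^\C$, the remaining term is $d\omega(JH_\alpha,JH,JH')=0$, because $\h$ is abelian. So:
\begin{itemize}
\item type (2) gives no information on $g|_{\h\times\h}$, and there is no condition of the form ``$g(H_\alpha,\cdot)\wedge\alpha=0$'';
\item your substitution ``$\omega(H_\alpha,H_\beta)$-terms $=\kappa_i\langle\alpha,\beta\rangle$ by type (2)'' in type (4) is unjustified;
\item type (3) is also identically zero, by $H_{\alpha+\beta}=H_\alpha+H_\beta$ and $N_{\alpha,\beta}=N_{\beta,-\alpha-\beta}=N_{-\alpha-\beta,\alpha}$.
\end{itemize}

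\textbf{What is actually missing.} Both halves of (b) must come out of the four-root-vector equations, where $g|_{\h\times\h}$ and the $y_\alpha$ are coupled. Set $x_\gamma:=\i y_\gamma$ and, for $\gamma,\gamma'\in\Phi^+$, $\delta_{\gamma,\gamma'}:=x_\gamma+x_{\gamma'}-x_{\gamma+\gamma'}$. For $\alpha\neq\beta$ in $\Phi^+$, with $\alpha$-string $\beta-p\alpha,\dots,\beta+q\alpha$, one finds
\[
2g(H_\alpha,H_\beta)=\langle\alpha,\alpha\rangle\bigl(q(p+1)\,\delta_{\alpha,\beta}-p(q+1)\,\delta'\bigr).
\]
Here $\delta'$ is $\delta_{\alpha-\beta,\beta}$ or $\delta_{\beta-\alpha,\alpha}$, according as $\alpha-\beta$ or $\beta-\alpha$ is positive. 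So $\kappa_i$ is not supplied by the Cartan block; it must emerge as the common value of the defects $\delta_{\gamma,\gamma'}$. For $\su(3)$, the three such equations give exactly $g|_{\h\times\h}=-\kappa B|_{\h\times\h}$ with $\kappa=\delta_{\alpha_1,\alpha_2}$, which is the Cartan condition and \eqref{pluriclosed roots equation 2} at once.

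In general you need an argument showing that the defects are constant on each $\Phi_i$ and that $g(H_\alpha,H_\beta)=-\kappa_i\langle\alpha,\beta\rangle$. Plausible ingredients: strongly orthogonal pairs give $g(H_\alpha,H_\beta)=0$; bilinearity with $H_{\alpha+\beta}=H_\alpha+H_\beta$ recovers the diagonal entries $g(H_\alpha,H_\alpha)$; connectedness of $\Phi_i$ propagates equalities. This is the missing core of (a)$\Rightarrow$(b).

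\textbf{A shortcut for (b)$\Rightarrow$(a).} You would otherwise have to check every quadruple, including those with no opposite pair, which you list but do not analyse. Instead, note that (b) makes $\alpha\mapsto \i y_\alpha-\kappa_i$ additive on $\Phi_i^+$, so $\omega=\omega_0+d\eta$, where:
\begin{itemize}
\item $\omega_0$ is the indefinite fundamental form of $-\sum\kappa_iB_i$; up to sign, $d^c\omega_0$ is the closed Cartan $3$-form, so $dd^c\omega_0=0$;
\item $\eta\in\h^*$ is extended by zero on root spaces, so $d\eta$ is of type $(1,1)$ and $d^cd\eta=0$.
\end{itemize}
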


Let $\lambda_\alpha = g(E_\alpha, \sigma E_\alpha)$, and let $s_\alpha = 1$ if $\alpha$ is compact and $s_\alpha = -1$ if $\alpha$ is non-compact. By the choice of $E_\alpha$ in Lemma \ref{sigma adapted root vectors lemma}, for $\alpha \in \Phi^+$, we find
$$\i y_\alpha = \i \omega(E_\alpha, E_{-\alpha}) = -g(E_\alpha,  E_{-\alpha}) = s_\alpha g(E_\alpha, - s_\alpha E_{-\alpha}) = s_\alpha \lambda_\alpha.$$
Therefore, Equation (\ref{pluriclosed roots equation 2}) is equivalent to 
\begin{equation}
    \label{pluriclosed roots equation 3}
    s_{\alpha + \beta} \lambda_{\alpha + \beta} = s_\alpha \lambda_\alpha + s_\beta \lambda_\beta - \kappa_i.
\end{equation}

\begin{proof}[Proof of Theorem \ref{pluriclosed technical theorem} Part (b)]
    First, let us show (i) implies (ii). Suppose $(G,J)$ admits a left-invariant pluriclosed metric $g$.  Now,  by symmetrising (Proposition \ref{symmetrizing to right T invariant}), we may assume that $g$ is right $T$-invariant, so  $g$ satisfies the conditions of Proposition \ref{pluriclosed metrics description} (b). Since $g|_{\h \times \h} = - (\sum \kappa_i B_i)|_{\h \times \h}$ and $g$ is Hermitian, we know that $- (\sum \kappa_i B_i)|_{\h \times \h}$ is compatible with $J|_\h$. Next, for the sake of contradiction, suppose that there is a connected component of $\V$ which is not listed in Table \ref{vogan table}.   By Proposition \ref{table 1 characterisation prop}, there exist non-compact positive roots $\alpha, \beta \in \Phi^+_i$ such that $\alpha + \beta$ is a root. Therefore, Equation (\ref{pluriclosed roots equation 3}) and Corollary \ref{compactness addition rules} imply that $0 < \lambda_{\alpha + \beta} = - \lambda_\alpha - \lambda_\beta - \kappa_i < 0$, a contradiction.

    Next, let us show that (ii) implies (iii). By (ii), we may choose $\kappa_1,\ldots, \kappa_s > 0$ such that $ - (\sum \kappa_i B_i)|_{\h \times \h}$ is $J|_\h$-invariant. Set $g := -\sum_i \kappa_i B_i (\cdot, \theta_i \cdot)$. Then $g$ is a left-invariant right $T$-invariant Riemannian metric with  $g|_{\h \times \h}  = - (\sum \kappa_i B_i)|_{\h \times \h}$. To see that $g$ is $J$-Hermitian, it suffices to show that $g(\q,\q) = 0$, where $\q = \l \oplus \bigoplus_{\alpha \in \Phi^+} \g^\C_\alpha$. Since $- (\sum \kappa_i B_i)|_{\h \times \h}$ is $J|_\h$-invariant, it follows that $g(\l,\l) = 0$.   Proposition \ref{right T invariant metrics form} implies that $g(\bigoplus_{\alpha \in \Phi}\g^\C_\alpha, \l) = 0$ and $g(\g^\C_\alpha, \g^\C_\beta) = 0$ for $\alpha, \beta \in \Phi^+$. Let us show that $g$ is pluriclosed. Observe that for $\alpha \in \Phi^+_i$, we have $$\lambda_\alpha = g(E_\alpha, \sigma E_\alpha) = -s_\alpha g(E_\alpha, E_{-\alpha}) =  s_\alpha \kappa_i B(E_\alpha, \theta E_{-\alpha}) = s_\alpha s_{-\alpha} \kappa_i B(E_\alpha, E_{-\alpha}) = \kappa_i.$$ Therefore, by Proposition \ref{pluriclosed metrics description} and Equation (\ref{pluriclosed roots equation 3}), it suffices to show that $$s_{\alpha + \beta}  = s_\alpha + s_\beta - 1$$ whenever $\alpha, \beta, \alpha + \beta \in \Phi^+$. Indeed, this equation holds because each connected component of $\V$ is listed in  Table \ref{vogan table}, so at most one of $\alpha$, $\beta$ is non-compact, by Proposition \ref{table 1 characterisation prop}.
\end{proof}

\bibliographystyle{alpha} 
\bibliography{references}
\end{document}